\newtheorem{theorem}{Theorem}
\theoremstyle{plain}
\newtheorem{corollary}{Corollary}
\newtheorem{definition}{Definition}
\newtheorem{example}{Example}
\newtheorem{problem}{Problem}
\newtheorem{proposition}{Proposition}
\newtheorem{remark}{Remark}
\begin{document}
\title[On the $L^{p}-$theory of Anisotropic singular...]{On the $L^{p}-$%
theory of Anisotropic singular perturbations of elliptic problems}
\author{Chokri Ogabi}
\dedicatory{ \ \ Academie de Grenoble, 38300 France}
\email{chokri.ogabi@ac-grenoble.fr}
\date{28,January 2015}
\keywords{Anisotropic singular perturbations, elliptic problem, $L^{p}-$%
theory, entropy solutions. Asymptotic behaviour. rate of convergence}
\subjclass{35J15, 35B60, 35B25}
\maketitle

\begin{abstract}
In this article we give an extention of the $L^{2}-$theory of anisotropic
singular perturbations for elliptic problems. We study a linear and some
nonlinear problems involving $L^{p}$ data ($1<p<2$). Convergences in pseudo
Sobolev spaces are proved for weak and entropy solutions, and rate of
convergence is given in cylindrical domains
\end{abstract}

\section{\textbf{Introduction}}

\subsection{Preliminaries}

In this article we shall give an extension of the $L^{2}-$theory of the
asymptotic behavior of elliptic, anisotropic singular perturbations
problems. This kind of singular perturbations has been introduced by M.
Chipot \cite{12}. From the physical point of view, these problems can
modelize diffusion phenomena when the diffusion coefficients in certain
directions are going toward zero. The $L^{2}$ theory of the asymptotic
behavior of these problems has been studied by M. Chipot and many
co-authors. First of all, let us begin by a brief discussion on the
uniqueness of the weak solution ( by weak a solution we mean a solution in
the sense of distributions) to the problem 
\begin{equation}
\left\{ 
\begin{array}{cc}
-\func{div}(A\nabla u)=f\text{ \ \ \ \ \ \ \ \ \ \ } &  \\ 
u=0\text{ \ \ on }\partial \Omega \text{\ \ \ \ \ \ \ \ \ \ \ \ \ \ \ \ } & 
\text{ }%
\end{array}%
\right.  \label{1}
\end{equation}

where $\Omega \subset 
\mathbb{R}
^{N}$, $N\geq 2$ is\ a bounded Lipschitz domain, we suppose that $f\in
L^{p}(\Omega )$ ($1<p<2$). The diffusion matrix $A=(a_{ij})$ is supposed to
be bounded and satisfies the ellipticity assumption on $\Omega $ ( see
assumptions (\ref{2}) and (\ref{3}) in subsection 1.2). It is well known
that (\ref{1}) has at least a weak solution in $W_{0}^{1,p}(\Omega )$.
Moreover, if $A$ is symmetric and continuous and $\partial \Omega \in C^{2}$ 
\cite{1} then (\ref{1}) has a unique solution in $W_{0}^{1,p}(\Omega )$. If $%
A$ is discontinuous the uniqueness assertion is false, in \cite{4} Serrin
has given a counterexample when $N\geq 3$. However, if $N=2$ and if $%
\partial \Omega $ is sufficiently smooth and without any continuity
assumption on $A$, (\ref{1}) has a unique weak solution in $%
W_{0}^{1,p}(\Omega )$. The proof is based on the Meyers regularity theorem
(see for instance \cite{8}). To treat this pathology, Benilin, Boccardo,
Gallouet, and al have introduced the concept of the entropy solution \cite{2}
for problems involving $L^{1}$ data (or more generally a Radon measure).

For every $k>0$ We define the function $T_{k}:%
\mathbb{R}
\rightarrow 
\mathbb{R}
$ by 
\begin{equation*}
T_{k}(s)=\left\{ 
\begin{array}{cc}
s & ,\left\vert s\right\vert \leq k \\ 
ksgn(x) & \left\vert s\right\vert \geq k%
\end{array}%
\right.
\end{equation*}

And we define the space $\mathcal{T}_{0}^{1,2}$ introduced in \cite{2}.%
\begin{equation*}
\mathcal{T}_{0}^{1,2}(\Omega )=\left\{ 
\begin{array}{c}
u\text{ }:\Omega \rightarrow 
\mathbb{R}
\text{ measurable such that for any }k>0\text{ there exists} \\ 
\text{ }(\phi _{n})\subset H_{0}^{1}(\Omega ):\phi _{n}\rightarrow T_{k}(u)%
\text{ a.e in }\Omega \text{ } \\ 
\text{and }(\nabla \phi _{n})_{n\in 
\mathbb{N}
}\text{ is bounded in }L^{2}(\Omega )%
\end{array}%
\right\}
\end{equation*}

This definition of $\mathcal{T}_{0}^{1,2}$ is equivalent to the original one
given in \cite{2}.In fact, this is a characterization of this space \cite{2}%
. Now, more generally, for $f\in L^{1}(\Omega )$ we have the following
definition of entropy solution \cite{2}.

\begin{definition}
\bigskip A function $u\in \mathcal{T}_{0}^{1,2}(\Omega )$ is said to be an
entropy solution to (\ref{1}) if \ \ 
\begin{equation*}
\int_{\Omega }A\nabla u\cdot \nabla T_{k}(u-\varphi )dx\leq \int_{\Omega
}fT_{k}(u-\varphi )dx\text{, }\varphi \in \mathcal{D(}\Omega \mathcal{)}%
\text{, }k>0
\end{equation*}
\end{definition}

We refer the reader to \cite{2} for more details about the sense of this
formulation. The main results of \cite{2} show that (\ref{1}) has a unique
entropy solution which is also a weak solution of (\ref{1}) moreover since $%
\Omega $ is bounded then this solution belongs to $\dbigcap\limits_{1\leq r<%
\frac{N}{N-1}}W_{0}^{1,r}(\Omega )$.

\subsection{Description of the problem and functional setting}

Throughout this article we will suppose that $f\in L^{p}(\Omega )$, $1<p<2$,
(we can suppose that $f\notin L^{2}(\Omega )$). We give a description of the
linear problem (some nonlinear problems will be studied later). Consider the
following singular perturbations problem%
\begin{equation}
\left\{ 
\begin{array}{cc}
-\func{div}(A_{\epsilon }\nabla u_{\epsilon })=f\text{ \ \ \ \ \ \ \ \ \ \ }
&  \\ 
u_{\epsilon }=0\text{ \ \ on }\partial \Omega \text{\ \ \ \ \ \ \ \ \ \ \ \
\ \ \ \ } & \text{ }%
\end{array}%
\right. ,  \label{2}
\end{equation}%
where $\Omega $ is a bounded Lipschitz domain of $%
\mathbb{R}
^{N}$. Let $q\in 
\mathbb{N}
^{\ast }$, $N-q\geq 2$. We denote by $x=(x_{1},...,x_{N})=(X_{1},X_{2})\in 
\mathbb{R}
^{q}\times 
\mathbb{R}
^{N-q}$ i.e. we split the coordinates into two parts. With this notation we
set%
\begin{equation*}
\nabla =(\partial _{x_{1}},...,\partial _{x_{N}})^{T}=\binom{\nabla _{X_{1}}%
}{\nabla _{X_{2}}},\text{ }
\end{equation*}

where 
\begin{equation*}
\nabla _{X_{1}}=(\partial _{x_{1}},...,\partial _{x_{q}})^{T}\text{ and }%
\nabla _{X_{2}}=(\partial _{x_{q+1}},...,\partial _{x_{N}})^{T}
\end{equation*}

Let $A=(a_{ij}(x))$ be a $N\times N$ matrix which satisfies the ellipticity
assumption%
\begin{equation}
\exists \lambda >0:A\xi \cdot \xi \geq \lambda \left\vert \xi \right\vert
^{2}\text{ }\forall \xi \in 
\mathbb{R}
^{N}\text{ for a.e }x\in \Omega ,  \label{3}
\end{equation}

and 
\begin{equation}
a_{ij}(x)\in L^{\infty }(\Omega ),\forall i,j=1,2,....,N,\text{ }  \label{4}
\end{equation}

We have decomposed $A$ into four blocks%
\begin{equation*}
A=\left( 
\begin{array}{cc}
A_{11} & A_{12} \\ 
A_{21} & A_{22}%
\end{array}%
\right) ,
\end{equation*}

where $A_{11}$, $A_{22}$ are respectively $q\times q$ and $(N-q)\times (N-q)$
matrices. For $0<\epsilon \leq 1$ we have set 
\begin{equation*}
A_{\epsilon }=\left( 
\begin{array}{cc}
\epsilon ^{2}A_{11} & \epsilon A_{12} \\ 
\epsilon A_{21} & A_{22}%
\end{array}%
\right)
\end{equation*}

We denote $\Omega _{X_{1}}=\left\{ X_{2}\in 
\mathbb{R}
^{N-q}:(X_{1},X_{2})\in \Omega \right\} $ and $\Omega ^{1}=P_{1}\Omega $
where $P_{1}:%
\mathbb{R}
^{N}\rightarrow 
\mathbb{R}
^{p}$ is the usual projector. We introduce the space 
\begin{equation*}
V_{p}=\left\{ 
\begin{array}{c}
u\in L^{p}(\Omega )\mid \nabla _{X_{2}}u\in L^{p}(\Omega )\text{,\ \ \ \ \ \
\ \ \ \ \ \ \ \ \ \ \ \ \ \ \ \ \ \ } \\ 
\text{and for a.e }X_{1}\in \Omega ^{1},u(X_{1},\cdot )\in
W_{0}^{1,p}(\Omega _{X_{1}})%
\end{array}%
\right\}
\end{equation*}

We equip $V_{p}$ with the norm 
\begin{equation*}
\left\Vert u\right\Vert _{V_{p}}=\left( \left\Vert u\right\Vert
_{L^{p}(\Omega )}^{p}+\left\Vert \nabla _{X_{2}}u\right\Vert _{L^{p}(\Omega
)}^{p}\right) ^{\frac{1}{p}},
\end{equation*}

then one can show easily that $(V_{p},\left\Vert \cdot \right\Vert _{V_{p}})$
is a separable reflexive Banach space.

The passage to the limit (formally) in (\ref{2}) gives the limit problem%
\begin{equation}
\left\{ 
\begin{array}{cc}
-\func{div}_{X_{2}}(A_{22}\nabla _{X_{2}}u_{0}(X_{1},\cdot ))=f\text{ }%
(X_{1},\cdot )\text{\ \ \ \ \ \ \ \ \ \ } &  \\ 
u_{0}(X_{1},\cdot )=0\text{ \ \ on }\partial \Omega _{X_{1}}\text{\ \ \ \ \
\ \ }X_{1}\in \Omega ^{1}\text{\ \ \ \ \ \ \ \ \ \ } & \text{ }%
\end{array}%
\right.  \label{5}
\end{equation}

The $L^{2}$-theory (when $f\in L^{2}$) of problem (\ref{2}) has been treated
in \cite{3}, convergence has been proved in $V_{2}$ and rate of convergence
in the $L^{2}-$norm has been given. For the $L^{2}-$theory of several
nonlinear problems we refer the reader to \cite{9},\cite{11},\cite{10}. This
article is mainly devoted to study the $L^{p}-$theory of the asymptotic
behavior of linear and nonlinear singularly perturbed problems. In other
words, we shall study the convergence $u_{\epsilon }\rightarrow u_{0}$ in$%
V_{p}$ (Notice that in \cite{9}, authors have treated some problems
involving $L^{p}$ data where some others data of the equations depend on $p$%
, one can check easily that it is not the $L^{p}$ theory which we expose in
this manuscript). Let us briefly summarize the content of the paper:

\begin{itemize}
\item In section 2: We study the linear problem, we prove convergences for
weak and entropy solutions.

\item In section 3: We give the rate of convergence in a cylindrical domain
when the data is independent of $X_{1}$.

\item In section 4: We treat some nonlinear problems.
\end{itemize}

\section{The Linear Problem}

The main results in this section are the following

\begin{theorem}
Assume (\ref{3}), (\ref{4}) then there exists a sequence $(u_{\epsilon
})_{0<\epsilon \leq 1}\subset W_{0}^{1,p}(\Omega )$ of weak solutions to (%
\ref{2}) and $u_{0}\in V_{p}$ such that $\ $ $\epsilon \nabla
_{X_{1}}u_{\epsilon }\rightarrow 0$ in $L^{p}(\Omega )$, $u_{\epsilon
}\rightarrow u_{0}$ in $V_{p}$ where $u_{0}$ satisfies (\ref{5}) for a.e $%
X_{1}\in \Omega ^{1}$.
\end{theorem}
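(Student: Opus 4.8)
The plan is to reduce the whole statement to a single uniform-in-$\epsilon$ a priori estimate in the $V_{p}$-norm and then to pass to the limit. First I would record the anisotropic ellipticity inherited from (\ref{3}): writing $\nabla _{\epsilon }v=(\epsilon \nabla _{X_{1}}v,\nabla _{X_{2}}v)$ and $\xi =(\xi _{1},\xi _{2})$, the substitution $\eta =(\epsilon \xi _{1},\xi _{2})$ gives $A_{\epsilon }\xi \cdot \xi =A\eta \cdot \eta \geq \lambda (\epsilon ^{2}|\xi _{1}|^{2}+|\xi _{2}|^{2})$, hence $A_{\epsilon }\nabla v\cdot \nabla v\geq \lambda |\nabla _{\epsilon }v|^{2}$. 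Existence of a weak solution $u_{\epsilon }\in W_{0}^{1,p}(\Omega )$ for each fixed $\epsilon $ is the standard fact quoted in the introduction; to obtain it \emph{together} with good estimates I would approximate $f$ by $f_{n}\in L^{2}(\Omega )\cap L^{p}(\Omega )$ with $f_{n}\rightarrow f$ in $L^{p}(\Omega )$, solve the nondegenerate problems by Lax--Milgram to get $u_{\epsilon }^{n}\in H_{0}^{1}(\Omega )$, and pass to the limit in $n$ once the estimate below is available.

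The heart of the matter is the bound
\begin{equation*}
\left\Vert v\right\Vert _{V_{p}}+\left\Vert \epsilon \nabla _{X_{1}}v\right\Vert _{L^{p}(\Omega )}\leq C\left\Vert g\right\Vert _{L^{p}(\Omega )},
\end{equation*}
with $C$ independent of $\epsilon $, for any $v\in H_{0}^{1}(\Omega )$ solving $-\mathrm{div}(A_{\epsilon }\nabla v)=g$. I would obtain it by the Boccardo--Gallouet nonlinear test-function technique: testing the equation with $\varphi =[1-(1+|v|)^{-\delta }]\,\mathrm{sgn}(v)$ for a suitable $\delta >0$, noting $\nabla \varphi =\delta (1+|v|)^{-1-\delta }\nabla v$ and $|\varphi |\leq 1$, and using the anisotropic ellipticity gives $\int_{\Omega }|\nabla _{\epsilon }v|^{2}(1+|v|)^{-1-\delta }\leq (\lambda \delta )^{-1}\left\Vert g\right\Vert _{L^{1}(\Omega )}$. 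A Hölder splitting with exponents $2/p$ and $2/(2-p)$ then converts this weighted $L^{2}$ bound into an $L^{p}$ bound on $\nabla _{\epsilon }v$, provided one controls $\int_{\Omega }(1+|v|)^{s}$ for the resulting exponent $s$. Here the anisotropy is essential: since only the $X_{2}$-gradient is controlled, I would close the estimate fiberwise, using that $v(X_{1},\cdot )\in W_{0}^{1,p}(\Omega _{X_{1}})$ for a.e.\ $X_{1}$ and invoking the Poincaré and Sobolev inequalities on the bounded section $\Omega _{X_{1}}$ (with constants uniform in $X_{1}$ since $\Omega $ is bounded and $N-q\geq 2$), choosing $\delta $ so that $s$ matches the fiberwise Sobolev exponent. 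The Poincaré inequality also yields $\left\Vert v\right\Vert _{L^{p}}\leq C\left\Vert \nabla _{X_{2}}v\right\Vert _{L^{p}}$, completing the $V_{p}$ bound. This uniform estimate is the main obstacle, and ensuring the constant does \emph{not} degenerate as $\epsilon \rightarrow 0$ is exactly where care is needed.

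Granting the estimate, the compactness step is routine. By reflexivity of $V_{p}$ a subsequence satisfies $u_{\epsilon }\rightharpoonup u_{0}$ in $V_{p}$, so $\nabla _{X_{2}}u_{\epsilon }\rightharpoonup \nabla _{X_{2}}u_{0}$ in $L^{p}(\Omega )$, while $\epsilon \nabla _{X_{1}}u_{\epsilon }$ is bounded in $L^{p}(\Omega )$ with some weak limit $\chi $. To see $\chi =0$, integrate by parts against $\phi \in \mathcal{D}(\Omega )$: $\int_{\Omega }(\epsilon \partial _{x_{i}}u_{\epsilon })\phi =-\int_{\Omega }\epsilon u_{\epsilon }\,\partial _{x_{i}}\phi \rightarrow 0$ because $\epsilon u_{\epsilon }\rightarrow 0$ in $L^{p}(\Omega )$, whence $\chi _{i}=0$ for each $i$. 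To identify $u_{0}$, pass to the limit in $\int_{\Omega }A_{\epsilon }\nabla u_{\epsilon }\cdot \nabla \varphi =\int_{\Omega }f\varphi $ for $\varphi \in \mathcal{D}(\Omega )$: expanding $A_{\epsilon }$ into its four blocks, every term carrying a factor $\epsilon $ or $\epsilon ^{2}$ tends to $0$ (using the $L^{p}$-boundedness of $\nabla _{X_{2}}u_{\epsilon }$ and $\epsilon \nabla _{X_{1}}u_{\epsilon }\rightharpoonup 0$), while $A_{22}\nabla _{X_{2}}u_{\epsilon }\cdot \nabla _{X_{2}}\varphi $ passes to the limit by weak convergence. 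This yields $\int_{\Omega }A_{22}\nabla _{X_{2}}u_{0}\cdot \nabla _{X_{2}}\varphi =\int_{\Omega }f\varphi $; choosing $\varphi (X_{1},X_{2})=\theta (X_{1})\psi (X_{2})$ and using $u_{0}(X_{1},\cdot )\in W_{0}^{1,p}(\Omega _{X_{1}})$ (built into $u_{0}\in V_{p}$), a localization/Fubini argument in $X_{1}$ produces (\ref{5}) for a.e.\ $X_{1}\in \Omega ^{1}$.

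It remains to upgrade $u_{\epsilon }\rightharpoonup u_{0}$ to strong convergence in $V_{p}$, the most delicate point for $1<p<2$ since $u_{\epsilon }$ is not an admissible test function and the $L^{2}$ energy identity is unavailable. Here I would exploit linearity together with the $L^{2}$-theory of \cite{3}. Applying the uniform estimate to the difference of two approximations (data $f_{n}-f_{m}$) shows that $n\mapsto u_{\epsilon }^{n}$ is Cauchy in $V_{p}$ uniformly in $\epsilon $, so it defines $u_{\epsilon }$, which is then a weak solution of (\ref{2}); likewise the limit solutions $u_{0}^{n}$ of (\ref{5}) converge to $u_{0}$, and for each fixed $n$ the result of \cite{3} gives $u_{\epsilon }^{n}\rightarrow u_{0}^{n}$ in $V_{2}\hookrightarrow V_{p}$ as $\epsilon \rightarrow 0$. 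A three-$\epsilon $ argument,
\begin{equation*}
\left\Vert u_{\epsilon }-u_{0}\right\Vert _{V_{p}}\leq \left\Vert u_{\epsilon }-u_{\epsilon }^{n}\right\Vert _{V_{p}}+\left\Vert u_{\epsilon }^{n}-u_{0}^{n}\right\Vert _{V_{p}}+\left\Vert u_{0}^{n}-u_{0}\right\Vert _{V_{p}},
\end{equation*}
then gives $u_{\epsilon }\rightarrow u_{0}$ in $V_{p}$ and, by the same mechanism, $\epsilon \nabla _{X_{1}}u_{\epsilon }\rightarrow 0$ in $L^{p}(\Omega )$. Alternatively one can argue directly, proving a.e.\ convergence of $\nabla _{X_{2}}u_{\epsilon }$ via truncated test functions in the spirit of \cite{2} and then invoking equi-integrability and Vitali's theorem.
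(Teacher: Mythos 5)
Your overall architecture is in fact the same as the paper's: approximate $f$ by $f_{n}\in L^{2}(\Omega )$, solve the regularized problems (\ref{7}) by Lax--Milgram, prove a bound uniform in $\epsilon $, show that $n\mapsto u_{\epsilon }^{n}$ is Cauchy in $V_{p}$ uniformly in $\epsilon $, use the $L^{2}$-theory of \cite{3} for each fixed $n$, and conclude by a triangle inequality; the identification of the limit problem (\ref{5}) by separated test functions and localization is also the paper's route. The genuine gap is in the step you yourself call the heart of the matter: the uniform estimate. First, the bound you claim, $\left\Vert v\right\Vert _{V_{p}}+\left\Vert \epsilon \nabla _{X_{1}}v\right\Vert _{L^{p}(\Omega )}\leq C\left\Vert g\right\Vert _{L^{p}(\Omega )}$, is \emph{linear} in $g$. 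For coefficients that are merely bounded, measurable and elliptic, such a linear $L^{p}$-gradient estimate is a Meyers-type statement whose validity is confined to exponents near $2$ (this is the same pathology behind Serrin's counterexample quoted in the introduction); it is not available for an arbitrary $1<p<2$ under (\ref{3})--(\ref{4}) alone. What is actually provable --- and what the paper establishes as (\ref{8}) --- is the product-form bound $\left\Vert \nabla _{X_{2}}v\right\Vert _{L^{p}}^{2}\leq C\left\Vert g\right\Vert _{L^{p}}\left( 1+\left\Vert v\right\Vert _{L^{p}}\right) $, which, applied to differences, yields only the square-root modulus $\left\Vert u_{\epsilon }^{n}-u_{\epsilon }^{m}\right\Vert _{V_{p}}\leq C\left\Vert f_{n}-f_{m}\right\Vert _{L^{p}}^{1/2}$ of (\ref{17}). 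Fortunately that weaker statement is all your Cauchy/three-$\epsilon $ argument needs, so the surrounding plan survives once the estimate is stated correctly.

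Second, and more seriously, your proposed derivation cannot produce even this weaker bound. Testing with $\varphi =[1-(1+|v|)^{-\delta }]\,\mathrm{sgn}(v)$ exploits only $|\varphi |\leq 1$, so the right-hand side collapses to $\left\Vert g\right\Vert _{L^{1}}$: the $L^{p}$ information on $g$ is discarded at the first step, so no estimate linear (or sublinear) in $\left\Vert g\right\Vert _{L^{p}}$ can come out. Moreover, the H\"{o}lder splitting of $\int |\nabla _{X_{2}}v|^{p}$ against the weight $(1+|v|)^{-1-\delta }$ forces the companion exponent $s=(1+\delta )p/(2-p)$, which is strictly larger than $p$ for every $\delta >0$ (because $p>1$). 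You must then control $\int_{\Omega }(1+|v|)^{s}dx$, and your fiberwise closure fails precisely here: the Sobolev inequality on the section bounds $\int_{\Omega _{X_{1}}}|v|^{s}dX_{2}$ by $\bigl( \int_{\Omega _{X_{1}}}|\nabla _{X_{2}}v|^{p}dX_{2}\bigr) ^{s/p}$, and integrating this in $X_{1}$ requires the $L^{s/p}$-moment over $\Omega ^{1}$ of the function $X_{1}\mapsto \int_{\Omega _{X_{1}}}|\nabla _{X_{2}}v|^{p}dX_{2}$, of which you control only the $L^{1}$-moment; since $s/p>1$, Jensen's inequality runs the wrong way and the estimate does not close. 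This is exactly why the paper uses $\theta (t)=\int_{0}^{t}(1+|\sigma |)^{p-2}d\sigma $ (after \cite{6}): with the weight $(1+|v|)^{p-2}$ the splitting produces the exponent \emph{exactly} $p$, the right-hand side is estimated by $\left\Vert g\right\Vert _{L^{p}}\bigl( \int_{\Omega }(1+|v|)^{p}dx\bigr) ^{(p-1)/p}$ via H\"{o}lder, and the loop is closed by the fiberwise Poincar\'{e} inequality, which --- unlike Sobolev at an exponent $s>p$ --- is an $L^{p}$-to-$L^{p}$ bound and therefore integrates in $X_{1}$. Replacing your test function by this $\theta $, and your linear estimate by the square-root one, turns your outline into the paper's proof.
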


\begin{corollary}
Assume (\ref{3}), (\ref{4}) then if $A$ is symmetric and continuous and $%
\partial \Omega \in C^{2}$, then there exists a unique $u_{0}\in V_{p}$ such
that $u_{0}(X_{1};\cdot )$ is the unique solution to (\ref{5}) in $%
W_{0}^{1,p}(\Omega _{X_{1}})$ for a.e $X_{1}$. Moreover the sequence $%
(u_{\epsilon })_{0<\epsilon \leq 1}$ of the unique solutions (in $%
W_{0}^{1,p}(\Omega )$) to (\ref{2}) converges in $V_{p}$ to $u_{0}$ and $%
\epsilon \nabla _{X_{1}}u_{\epsilon }\rightarrow 0$ in $L^{p}(\Omega )$.
\end{corollary}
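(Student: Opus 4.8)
The plan is to deduce the Corollary from the Theorem by upgrading two existence statements into uniqueness statements and then promoting subsequential convergence to convergence of the whole family.

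First I would record that $A_{\epsilon}$ inherits the good structure of $A$. Writing $\xi=(\eta,\zeta)\in\mathbb{R}^{q}\times\mathbb{R}^{N-q}$ and $D_{\epsilon}\xi=(\epsilon\eta,\zeta)$, a direct computation gives the scaling identity $A_{\epsilon}\xi\cdot\xi=A(D_{\epsilon}\xi)\cdot(D_{\epsilon}\xi)\geq\lambda|D_{\epsilon}\xi|^{2}\geq\lambda\epsilon^{2}|\xi|^{2}$, so $A_{\epsilon}$ is elliptic for each fixed $\epsilon\in(0,1]$; moreover $A_{\epsilon}$ is symmetric and continuous whenever $A$ is. Hence the uniqueness result quoted in the Introduction (symmetric continuous coefficients, $\partial\Omega\in C^{2}$) applies to (\ref{2}) for every $\epsilon$, so (\ref{2}) has exactly one weak solution $u_{\epsilon}\in W_{0}^{1,p}(\Omega)$. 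In particular the family produced by the Theorem has no freedom of choice: it must coincide with the family of these unique solutions. The same bookkeeping shows that the slice coefficient $A_{22}(X_{1},\cdot)$ is symmetric, continuous and elliptic on $\Omega_{X_1}$ with the same constant $\lambda$, since $A_{22}\zeta\cdot\zeta=A(0,\zeta)\cdot(0,\zeta)$.

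Next I would establish uniqueness of the limit by reducing global uniqueness in $V_{p}$ to fibrewise uniqueness. If $u_{0},\widetilde{u}_{0}\in V_{p}$ both satisfy (\ref{5}) for a.e. $X_{1}$, then for a.e. $X_{1}$ the slices $u_{0}(X_{1},\cdot)$ and $\widetilde{u}_{0}(X_{1},\cdot)$ are both solutions of (\ref{5}) in $W_{0}^{1,p}(\Omega_{X_1})$; if that slice problem has a unique solution, the slices agree a.e. in $\Omega_{X_1}$ for a.e. $X_{1}$, whence $u_{0}=\widetilde{u}_{0}$ as elements of $V_{p}$. Existence of such a $u_{0}$ is already granted by the Theorem, so this yields the unique $u_{0}$ of the statement. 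Finally I would promote the convergence to the whole family by the standard subsequence principle: the a priori bounds underlying the Theorem show that $(u_{\epsilon})$ is bounded in $V_{p}$ and $(\epsilon\nabla_{X_{1}}u_{\epsilon})$ is bounded in $L^{p}(\Omega)$, and given any sequence $\epsilon_{n}\to 0$ the compactness and passage-to-the-limit arguments of the Theorem extract a further subsequence along which $u_{\epsilon_{n}}\to v$ in $V_{p}$ with $v$ satisfying (\ref{5}); by the uniqueness just proved, $v=u_{0}$. Since every subsequence admits a sub-subsequence converging to the same limit $u_{0}$, the whole family converges: $u_{\epsilon}\to u_{0}$ in $V_{p}$ and $\epsilon\nabla_{X_{1}}u_{\epsilon}\to 0$ in $L^{p}(\Omega)$.

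The main obstacle is the fibrewise uniqueness of (\ref{5}): the cited uniqueness theorem needs $\partial\Omega_{X_1}\in C^{2}$, but slices of a $C^{2}$ domain need not inherit $C^{2}$ regularity, so the delicate point is to justify that for a.e. $X_{1}$ the limit problem still has a unique $W_{0}^{1,p}(\Omega_{X_1})$ solution (e.g. by measurability of the slice regularity, an approximation argument, or a Meyers-type estimate exploiting the continuity of $A_{22}$). A secondary technical point is to check that the Theorem's compactness argument applies verbatim to an arbitrary subsequence, which it does since it rests only on the uniform bounds.
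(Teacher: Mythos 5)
Your proposal is correct and takes essentially the same route as the paper, whose entire proof is the one-liner that the Corollary follows from Theorem 1 plus the uniqueness (under the symmetry, continuity, and $C^{2}$ hypotheses) of the weak solutions to (\ref{2}) and to the slice problems (\ref{5}); your identification "the family produced by the Theorem must coincide with the family of unique solutions" is exactly this argument, and the added subsequence machinery is redundant once that identification is made, since Theorem 1 already asserts convergence of the whole family. The "main obstacle" you flag — whether $\partial \Omega _{X_{1}}\in C^{2}$ for a.e. $X_{1}$ so that fibrewise uniqueness applies — is precisely the point the paper disposes of by bare assertion ("Notice that $\partial \Omega _{X_{1}}\in C^{2}$"), so on this delicate point you are, if anything, more careful than the paper itself.
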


\begin{proof}
This corollary follows immediately from Theorem 1 and uniqueness of the
solutions of (\ref{2}) and (\ref{5}) as mentioned in subsection 1.1 (Notice
that $\partial \Omega _{X_{1}}\in C^{2}$).
\end{proof}

\begin{theorem}
Assume (\ref{3}), (\ref{4}) then there exists a unique $u_{0}\in V_{p}$ such
that $u_{0}(X_{1},\cdot )$ is the unique entropy solution of (\ref{5}).
Moreover, the sequence of \ the entropy solutions $(u_{\epsilon
})_{0<\epsilon \leq 1}$ of (\ref{2}) converges to $u_{0}$ in $V_{p}$ and $%
\epsilon \nabla _{X_{1}}u_{\epsilon }\rightarrow 0$ in $L^{p}(\Omega )$.
\end{theorem}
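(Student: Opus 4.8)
The plan is to prove Theorem~2 (the entropy version) by parallelling the structure of the weak-solution result (Theorem~1), but replacing every energy estimate based on testing with $u_\epsilon$ itself by the corresponding estimate obtained from the truncation $T_k(u_\epsilon)$, which is the natural test object in the entropy framework. First I would establish \emph{a priori} estimates for the family $(u_\epsilon)$ of entropy solutions of (\ref{2}) that are uniform in $\epsilon$. Taking $\varphi=0$ in the entropy inequality gives
\begin{equation*}
\int_\Omega A_\epsilon\nabla u_\epsilon\cdot\nabla T_k(u_\epsilon)\,dx\le\int_\Omega f\,T_k(u_\epsilon)\,dx\le k\|f\|_{L^1(\Omega)}.
\end{equation*}
Using the block structure of $A_\epsilon$ together with the ellipticity (\ref{3}), the left-hand side controls $\int_{\{|u_\epsilon|\le k\}}\bigl(\epsilon^2|\nabla_{X_1}u_\epsilon|^2+|\nabla_{X_2}u_\epsilon|^2\bigr)\,dx$ up to the cross terms $\epsilon A_{12},\epsilon A_{21}$, which are absorbed by Young's inequality at the cost of a harmless constant. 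This yields the level-set energy bound $\int_{\{|u_\epsilon|\le k\}}\bigl(\epsilon^2|\nabla_{X_1}u_\epsilon|^2+|\nabla_{X_2}u_\epsilon|^2\bigr)\,dx\le Ck$ with $C$ independent of $\epsilon$; this is exactly the structural inequality from which \cite{2} extracts the $W^{1,r}$ bounds for $r<N/(N-1)$, and the same Marcinkiewicz/truncation argument here gives uniform bounds for $u_\epsilon$ and $\epsilon\nabla_{X_1}u_\epsilon$, $\nabla_{X_2}u_\epsilon$ in $L^r$ for every $r<N/(N-1)$, and in particular in $L^p$ since $p<2\le N/(N-1)$ fails in general — so I would be careful to fix $r\in(1,N/(N-1))$ with $p\le r$ and argue in $L^r$, noting $L^r(\Omega)\hookrightarrow L^p(\Omega)$ on the bounded domain when $r\ge p$.

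Next I would pass to the limit. From the uniform estimates, up to a subsequence $u_\epsilon\rightharpoonup u_0$ weakly in $V_r$ (hence in $V_p$), $\epsilon\nabla_{X_1}u_\epsilon\to 0$ in $L^r(\Omega)$ because its norm is $O(\epsilon)$ uniformly, and $\nabla_{X_2}u_\epsilon\rightharpoonup\nabla_{X_2}u_0$. The decisive point is to identify the limit $u_0$ as the entropy solution of the family of limit problems (\ref{5}) and to upgrade weak convergence to strong convergence in $V_p$. For the identification, I would slice the entropy inequality in the $X_1$ variable: for test functions $\varphi=\varphi(X_1,X_2)\in\mathcal{D}(\Omega)$ the term $\int_\Omega A_\epsilon\nabla u_\epsilon\cdot\nabla T_k(u_\epsilon-\varphi)\,dx$ splits into an $X_2$-block that survives in the limit and $X_1$-blocks carrying factors $\epsilon$ or $\epsilon^2$ that vanish by the energy bound. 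Passing to the limit (using that $T_k$ is Lipschitz and the almost-everywhere convergence of $u_\epsilon$, which follows from compactness in $L^r$) should produce, for a.e.\ fixed $X_1$, the entropy inequality defining the solution of (\ref{5}) on the slice $\Omega_{X_1}$. Uniqueness of the slice-wise entropy solution (the main result of \cite{2} applied on each $\Omega_{X_1}$) then pins down $u_0$ uniquely and shows the whole family converges, not just a subsequence.

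The main obstacle I anticipate is the \textbf{strong} convergence $u_\epsilon\to u_0$ in $V_p$, i.e.\ upgrading the weak/gradient convergence of $\nabla_{X_2}u_\epsilon$ to a strong one in $L^p$. In the $L^2$ theory one subtracts the two equations and tests with $u_\epsilon-u_0$, getting a clean energy identity; here $u_\epsilon-u_0\notin H^1_0$ and is not an admissible test object, so I would instead work with truncations and exploit the monotonicity hidden in the entropy formulation. Concretely, I would use $T_k(u_\epsilon-u_0)$ (or $T_k(u_\epsilon-\varphi_\delta)$ for a smooth approximation $\varphi_\delta$ of $u_0$) and the standard Boccardo--Gallou\"et trick of splitting the domain according to whether $|u_\epsilon-u_0|$ is small or large, controlling the large part by the uniform Marcinkiewicz estimate and the small part by coercivity. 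This should give $\nabla_{X_2}(u_\epsilon-u_0)\to0$ in measure and, combined with equi-integrability of the gradients in $L^p$ (from the uniform $L^r$ bound with $r>p$), strong $L^p$ convergence via Vitali. I expect the cross-block terms $\epsilon A_{12}\nabla_{X_2}u_\epsilon\cdot\nabla_{X_1}(\cdot)$ and the lack of a direct subtraction test function to be the technically delicate parts; the rest follows the now-classical entropy-solution machinery of \cite{2} applied slice-wise in $X_1$.
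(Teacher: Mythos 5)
Your plan has a genuine gap at its very first step, and it propagates through everything that follows. The a priori estimate you extract from the entropy inequality with $\varphi =0$,
\begin{equation*}
\int_{\Omega }A_{\epsilon }\nabla u_{\epsilon }\cdot \nabla T_{k}(u_{\epsilon })\,dx\leq k\left\Vert f\right\Vert _{L^{1}(\Omega )},
\end{equation*}
uses only the $L^{1}$-norm of $f$, and the Marcinkiewicz/truncation machinery of \cite{2} built on the resulting level-set bound can never give more than uniform bounds in $W^{1,r}$ for $r<\frac{N}{N-1}$. You noticed the difficulty yourself and proposed to fix $r\in (1,\frac{N}{N-1})$ with $r\geq p$; but such an $r$ exists only when $p<\frac{N}{N-1}$. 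In this paper $q\geq 1$ and $N-q\geq 2$ force $N\geq 3$, hence $\frac{N}{N-1}\leq \frac{3}{2}<2$, while the theorem is claimed for every $p\in (1,2)$. So for all $p\in [\frac{N}{N-1},2)$ your estimate step produces no uniform $L^{p}$ bound on $\nabla _{X_{2}}u_{\epsilon }$ at all, and with it both the weak compactness in $V_{p}$ and the equi-integrability of the gradients (which your Vitali argument for strong convergence needs, via "the uniform $L^{r}$ bound with $r>p$") collapse. The information $f\in L^{p}$ must enter the energy estimate itself, not merely through $\left\Vert f\right\Vert _{L^{1}}$; the entropy inequality with $\varphi \in \mathcal{D}(\Omega )$ alone does not let you do this. (A smaller slip of the same kind: the level-set bound makes $\epsilon \nabla _{X_{1}}u_{\epsilon }$ bounded, i.e. $O(1)$, not $O(\epsilon )$, so strong convergence of $\epsilon \nabla _{X_{1}}u_{\epsilon }$ to $0$ also does not come for free.)

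The paper's route is designed exactly to avoid this: it never estimates the entropy solutions directly. It regularizes the data, $f_{n}\in L^{2}(\Omega )$, $f_{n}\rightarrow f$ in $L^{p}(\Omega )$, solves (\ref{7}) in $H_{0}^{1}(\Omega )$, and tests with $\theta (u_{\epsilon }^{n})$, where $\theta (t)=\int_{0}^{t}(1+\left\vert s\right\vert )^{p-2}ds$; H\"{o}lder's inequality then pairs $\left\Vert f_{n}\right\Vert _{L^{p}}$ against $(1+\left\vert u_{\epsilon }^{n}\right\vert )^{p-1}$, which is precisely how the hypothesis $f\in L^{p}$ (rather than $f\in L^{1}$) is converted into uniform $V_{p}$ bounds valid for all $p\in (1,2)$. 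The same test function applied to differences gives $\left\Vert u_{\epsilon }^{n}-u_{\epsilon }^{m}\right\Vert _{V_{p}}\leq C\left\Vert f_{n}-f_{m}\right\Vert _{L^{p}}^{1/2}$ uniformly in $\epsilon $, whence the strong $V_{p}$ convergence of Theorem 1 by a triangular-inequality argument. Theorem 2 is then reduced to an identification problem: testing (\ref{7}) with $T_{k}(u_{\epsilon }^{n})$ shows $\nabla T_{k}(u_{\epsilon }^{n})$ is bounded in $L^{2}$, so $T_{k}(u_{\epsilon })\in H_{0}^{1}(\Omega )$ and $u_{\epsilon }\in \mathcal{T}_{0}^{1,2}(\Omega )$, and the stability theory of \cite{2} identifies the constructed limits as the unique entropy solutions of (\ref{2}) and (\ref{5}); the claimed convergences are then inherited verbatim from Theorem 1. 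If you wish to salvage a "direct" proof working only with entropy solutions, the natural repair is in fact the paper's: invoke uniqueness of entropy solutions to replace them by the limits of the regularized problems, and put all the quantitative work there.
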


\subsection{Weak convergence}

Let us prove the following primary result

\begin{theorem}
Assume (\ref{3}), (\ref{4}) then there exists a sequence $(u_{\epsilon
_{k}})_{k\in 
\mathbb{N}
}\subset W_{0}^{1,p}(\Omega )$ of weak solutions to (\ref{2}) ($\epsilon
_{k}\rightarrow 0$ as $k\rightarrow \infty $) and $u_{0}\in V_{p}$ such that 
$\nabla _{X_{2}}u_{\epsilon _{k}}\rightharpoonup \nabla _{X_{2}}u_{0}$, $%
\epsilon _{k}\nabla _{X_{1}}u_{\epsilon _{k}}^{n}\rightharpoonup 0$, $%
u_{\epsilon _{k}}\rightharpoonup u_{0}$ in $L^{p}(\Omega -weak$. and $u_{0}$
satisfies (\ref{5}) for a.e $X_{1}\in \Omega ^{1}.$
\end{theorem}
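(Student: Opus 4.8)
The plan is to combine the anisotropic energy structure of $A_\epsilon$ with the truncation (Boccardo--Gallou\"et--Stampacchia) machinery, which is the natural tool for $L^p$ data with $1<p<2$, and then to pass to the weak limit. First, for each fixed $\epsilon\in(0,1]$ the matrix $A_\epsilon$ is uniformly elliptic (with constant degenerating like $\epsilon^2$), so the standard $L^p$-theory recalled in the introduction guarantees a weak solution $u_\epsilon\in W_0^{1,p}(\Omega)$ of (\ref{2}). The starting point is the elementary but crucial identity $A_\epsilon\xi\cdot\xi=A(\epsilon\xi_1,\xi_2)\cdot(\epsilon\xi_1,\xi_2)$ for $\xi=(\xi_1,\xi_2)$, which together with (\ref{3}) yields the anisotropic ellipticity
\[
A_\epsilon\nabla u_\epsilon\cdot\nabla u_\epsilon\ \geq\ \lambda\bigl(\epsilon^2|\nabla_{X_1}u_\epsilon|^2+|\nabla_{X_2}u_\epsilon|^2\bigr).
\]
Testing the weak formulation with $T_k(u_\epsilon)$ and using this inequality gives the fundamental truncation estimate
\[
\int_{\{|u_\epsilon|<k\}}\bigl(\epsilon^2|\nabla_{X_1}u_\epsilon|^2+|\nabla_{X_2}u_\epsilon|^2\bigr)\,dx\ \leq\ \frac{k}{\lambda}\,\|f\|_{L^1(\Omega)},
\]
uniformly in $\epsilon$ (here $\|f\|_{L^1}\leq C\|f\|_{L^p}$ since $\Omega$ is bounded).

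Second, and this is where the real work lies, I would convert this level-$k$ estimate into \emph{uniform} bounds for $\|u_\epsilon\|_{L^p}$, $\|\nabla_{X_2}u_\epsilon\|_{L^p}$ and $\|\epsilon\nabla_{X_1}u_\epsilon\|_{L^p}$. Writing $G_\epsilon=(\epsilon^2|\nabla_{X_1}u_\epsilon|^2+|\nabla_{X_2}u_\epsilon|^2)^{1/2}$, I would run the Stampacchia distribution-function argument: estimate $|\{G_\epsilon>s\}|\leq s^{-2}\int_{\{|u_\epsilon|<k\}}G_\epsilon^2+|\{|u_\epsilon|>k\}|$ and optimize in $k$. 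The decay of $|\{|u_\epsilon|>k\}|$ must come from a Sobolev--Poincar\'e inequality applied to $T_k(u_\epsilon)$; since we only control $\nabla_{X_2}$, this inequality has to be used \emph{slicewise} in the $X_2$-variables and then integrated over $X_1\in\Omega^1$, which is precisely where the hypothesis $N-q\geq2$ enters (it makes $\Omega_{X_1}\subset\mathbb{R}^{N-q}$ genuinely multidimensional and the slice embedding nontrivial). The outcome is uniform bounds, in particular $\|u_\epsilon\|_{V_p}\leq C$ and $\|\epsilon\nabla_{X_1}u_\epsilon\|_{L^p}\leq C$, with $C$ independent of $\epsilon$. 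I expect the bookkeeping of exponents in this anisotropic Stampacchia step to be the main obstacle.

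Third, reflexivity of $L^p(\Omega)$ and of $V_p$ lets me extract $\epsilon_k\to0$ and $u_0\in V_p$ with $u_{\epsilon_k}\rightharpoonup u_0$ in $L^p$, $\nabla_{X_2}u_{\epsilon_k}\rightharpoonup\nabla_{X_2}u_0$ and $\epsilon_k\nabla_{X_1}u_{\epsilon_k}\rightharpoonup h$ in $L^p$. The limit $\nabla_{X_2}u_0$ is identified by uniqueness of distributional limits, and $u_0\in V_p$ follows from closedness of $V_p$ under weak convergence. To see $h=0$, note that $\epsilon_k\nabla_{X_1}u_{\epsilon_k}=\nabla_{X_1}(\epsilon_k u_{\epsilon_k})$ and $\epsilon_k u_{\epsilon_k}\to0$ strongly in $L^p$, so the distributional limit of $\epsilon_k\nabla_{X_1}u_{\epsilon_k}$ vanishes, forcing $h=0$.

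Finally, I would pass to the limit in $\int_\Omega A_{\epsilon_k}\nabla u_{\epsilon_k}\cdot\nabla\varphi\,dx=\int_\Omega f\varphi\,dx$ for $\varphi\in\mathcal{D}(\Omega)$. Expanding $A_{\epsilon_k}$ into its four blocks, the upper-left and off-diagonal contributions each carry an explicit factor $\epsilon_k$ or else multiply $\epsilon_k\nabla_{X_1}u_{\epsilon_k}\rightharpoonup0$, hence vanish, while the $A_{22}$-block converges thanks to the weak convergence of $\nabla_{X_2}u_{\epsilon_k}$. This leaves $\int_\Omega A_{22}\nabla_{X_2}u_0\cdot\nabla_{X_2}\varphi\,dx=\int_\Omega f\varphi\,dx$, the weak form of (\ref{5}); choosing product test functions $\varphi=\psi(X_1)\chi(X_2)$ and applying Fubini yields (\ref{5}) for a.e.\ $X_1\in\Omega^1$, completing the proof.
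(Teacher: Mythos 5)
The central gap is in your second step. You take $u_{\epsilon}$ to be an arbitrary weak solution in $W_{0}^{1,p}(\Omega )$ supplied by ``standard $L^{p}$-theory'' and then test its weak formulation with $T_{k}(u_{\epsilon })$. This is not legitimate: the weak formulation of (\ref{2}) holds against $\mathcal{D}(\Omega )$, and extending it to $T_{k}(u_{\epsilon })$ would require $A_{\epsilon }\nabla u_{\epsilon }\cdot \nabla T_{k}(u_{\epsilon })=\chi _{\{|u_{\epsilon }|<k\}}A_{\epsilon }\nabla u_{\epsilon }\cdot \nabla u_{\epsilon }$ to be integrable, i.e. $|\nabla u_{\epsilon }|^{2}\in L^{1}$ on the truncation set, which is not known for $u_{\epsilon }\in W^{1,p}$, $p<2$. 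Worse, since $A$ is only bounded measurable, weak solutions are not unique (Serrin's counterexample, recalled in the paper's introduction), and a pathological weak solution need not satisfy any truncation estimate at all; hence no argument can establish your claimed uniform bound for an \emph{arbitrary} weak solution. The estimates must be built into the solution by construction. This is exactly what the paper does: it approximates $f$ by $f_{n}\in L^{2}(\Omega )$, solves the regularized problem (\ref{7}) in $H_{0}^{1}(\Omega )$ by Lax--Milgram (where testing with $\theta (t)=\int_{0}^{t}(1+|s|)^{p-2}ds$ \emph{is} legitimate), derives the uniform bounds (\ref{9}), (\ref{10}) at that level, and only then obtains $u_{\epsilon }$ as a weak limit of $u_{\epsilon }^{n_{l}}$, so that the bounds are inherited by weak lower semicontinuity. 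Your proposal needs this regularization layer inserted before any testing; once it is there, your anisotropic Stampacchia bookkeeping becomes unnecessary, because the $\theta $-test gives the $L^{p}$ gradient bound in one stroke (and, incidentally, $N-q\geq 2$ plays no role in it).

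A second, smaller gap is at the end. For a fixed product test function $\psi \otimes \chi $, Fubini gives the slice equation for a.e. $X_{1}$, but the exceptional null set depends on $\chi $, and $\mathcal{D}(\Omega _{X_{1}})$ is uncountable; moreover on a general Lipschitz $\Omega $ the slices $\Omega _{X_{1}}$ vary with $X_{1}$, so globally defined product test functions are not even available. To obtain one null set valid for all test functions, the paper covers $\Omega $ by countably many products $U_{i}\times V_{i}$, passes by density to $\psi \in W_{0}^{1,p^{\prime }}(V_{i})$, and uses a Schauder basis of $W_{0}^{1,p^{\prime }}(V_{i})$ together with a partition of unity; some such countable-density device is indispensable, and ``applying Fubini'' alone does not deliver ``(\ref{5}) holds for a.e. $X_{1}$''. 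Relatedly, your appeal to ``closedness of $V_{p}$ under weak convergence'' conceals the slice zero-trace assertion $u_{0}(X_{1},\cdot )\in W_{0}^{1,p}(\Omega _{X_{1}})$, which the paper proves via Mazur's lemma (strong $L^{p}$ convergence of convex combinations of the $u_{\epsilon _{k}}$, then a.e. slice convergence); this step is fixable in your framework, e.g. by noting $W_{0}^{1,p}(\Omega )\subset V_{p}$ and invoking reflexivity of $V_{p}$, but it must be said.
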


\begin{proof}
By density let $(f_{n})_{n\in 
\mathbb{N}
}\subset L^{2}(\Omega )$ be a sequence such that $f_{n}\rightarrow f$ in $%
L^{p}(\Omega )$, we can suppose that $\forall n\in $ $%
\mathbb{N}
$ :$\left\Vert f_{n}\right\Vert _{L^{p}}\leq M$, $M\geq 0$. Consider the
regularized problem%
\begin{equation}
u_{\epsilon }^{n}\in H_{0}^{1}(\Omega ),\text{ \ }\int_{\Omega }A_{\epsilon
}\nabla u_{\epsilon }^{n}\cdot \nabla \varphi dx=\int_{\Omega }f_{n}\varphi
dx\ ,\ \varphi \in \mathcal{D(}\Omega \mathcal{)}  \label{7}
\end{equation}

Assumptions (\ref{2}) and (\ref{3}) shows that $u_{\epsilon }^{n}$ exists
and it is unique by the Lax-Milgram theorem. (Notice that $u_{\epsilon }^{n}$
also belongs to $W_{0}^{1,p}(\Omega )$). We introduce the function 
\begin{equation*}
\theta (t)=\dint\limits_{0}^{t}(1+\left\vert s\right\vert )^{p-2}ds\text{, }%
t\in 
\mathbb{R}%
\end{equation*}%
This kind of function has been used in \cite{6}. We have $\theta ^{\prime
}(t)=$ $(1+\left\vert t\right\vert )^{p-2}\leq 1$ and $\theta (0)=0$,
therefore we have $\theta (u)\in H_{0}^{1}(\Omega )$ for every $u\in
H_{0}^{1}(\Omega )$. Testing with $\theta (u_{\epsilon }^{n})$ in (\ref{7})
and using the ellipticity assumption we deduce 
\begin{multline*}
\lambda \epsilon ^{2}\int_{\Omega }(1+\left\vert u_{\epsilon
}^{n}\right\vert )^{p-2}\left\vert \nabla _{X_{1}}u_{\epsilon
}^{n}\right\vert ^{2}dx+\lambda \int_{\Omega }(1+\left\vert u_{\epsilon
}^{n}\right\vert )^{p-2}\left\vert \nabla _{X_{2}}u_{\epsilon
}^{n}\right\vert ^{2}dx \\
\leq \int_{\Omega }f_{n}\theta (u_{\epsilon }^{n})dx\leq \frac{2}{p-1}%
\int_{\Omega }\left\vert f_{n}\right\vert (1+\left\vert u_{\epsilon
}^{n}\right\vert )^{p-1}dx\text{,}
\end{multline*}%
where we have used $\left\vert \theta (t)\right\vert \leq \frac{%
2(1+\left\vert t\right\vert )^{p-1}}{p-1}$. In the other hand, by H\"{o}%
lder's inequality we have%
\begin{equation*}
\int_{\Omega }\left\vert \nabla _{X_{2}}u_{\epsilon }^{n}\right\vert
^{p}dx\leq \left( \int_{\Omega }(1+\left\vert u_{\epsilon }^{n}\right\vert
)^{p-2}\left\vert \nabla _{X_{2}}u_{\epsilon }^{n}\right\vert ^{2}dx\right)
^{\frac{p}{2}}\left( \int_{\Omega }(1+\left\vert u_{\epsilon
}^{n}\right\vert )^{p}dx\right) ^{1-\frac{p}{2}}
\end{equation*}%
From the two previous integral inequalities we deduce 
\begin{multline*}
\int_{\Omega }\left\vert \nabla _{X_{2}}u_{\epsilon }^{n}\right\vert
^{p}dx\leq \left( \frac{2}{\lambda (p-1)}\int_{\Omega }\left\vert
f_{n}\right\vert (1+\left\vert u_{\epsilon _{k}}^{n}\right\vert
)^{p-1}dx\right) ^{\frac{p}{2}}\times \\
\left( \int_{\Omega }(1+\left\vert u_{\epsilon _{k}}^{n}\right\vert
)^{p}dx\right) ^{1-\frac{p}{2}}
\end{multline*}%
By H\"{o}lder's inequality we get 
\begin{equation}
\left\Vert \nabla _{X_{2}}u_{\epsilon }^{n}\right\Vert _{L^{p}(\Omega )}\leq
\left( \dfrac{2\left\Vert f_{n}\right\Vert _{L^{p}}}{\lambda (p-1)}\right) ^{%
\frac{1}{2}}\left( \int_{\Omega }(1+\left\vert u_{\epsilon }^{n}\right\vert
)^{p}dx\right) ^{\frac{1}{2p}}  \label{8}
\end{equation}%
Using Minkowki inequality we get 
\begin{equation*}
\left\Vert \nabla _{X_{2}}u_{\epsilon }^{n}\right\Vert _{L^{p}(\Omega
)}^{2}\leq C(1+\left\Vert u_{\epsilon }^{n}\right\Vert _{L^{p}(\Omega )}),
\end{equation*}%
Thanks to Poincar\'{e}'s inequality $\left\Vert u_{\epsilon }^{n}\right\Vert
_{L^{p}(\Omega )}\leq C_{\Omega }\left\Vert \nabla _{X_{2}}u_{\epsilon
}^{n}\right\Vert _{L^{p}(\Omega )}$ we obtain\qquad 
\begin{equation*}
\left\Vert \nabla _{X_{2}}u_{\epsilon }^{n}\right\Vert _{L^{p}(\Omega
)}^{2}\leq C^{\prime }(1+\left\Vert \nabla _{X_{2}}u_{\epsilon
}^{n}\right\Vert _{L^{p}(\Omega )}),
\end{equation*}

where the constant $C^{\prime }$ depends on $p$, $\lambda $, $mes(\Omega )$, 
$M$ and $C_{\Omega }$. Whence, we deduce%
\begin{equation}
\left\Vert u_{\epsilon }^{n}\right\Vert _{L^{p}(\Omega )}\text{, }\left\Vert
\nabla _{X_{2}}u_{\epsilon }^{n}\right\Vert _{L^{p}(\Omega )}\leq C^{\prime
\prime }  \label{9}
\end{equation}

Similarly we obtain 
\begin{equation}
\left\Vert \epsilon \nabla _{X_{1}}u_{\epsilon }^{n}\right\Vert
_{L^{p}(\Omega )}\leq C^{\prime \prime \prime },  \label{10}
\end{equation}%
where the constants $C^{\prime \prime }$, $C^{\prime \prime \prime }$ are
independent of $n$ and $\epsilon $, so 
\begin{equation}
\left\Vert u_{\epsilon }^{n}\right\Vert _{W^{1,p}(\Omega )}\leq \frac{Const}{%
\epsilon }  \label{11}
\end{equation}

Fix $\epsilon $, since $W^{1,p}(\Omega )$ is reflexive then (\ref{11})
implies that there exists a subsequence $(u_{\epsilon _{k}}^{n_{l}(\epsilon
)})_{l\in 
\mathbb{N}
}$ and $u_{\epsilon }\in W_{0}^{1,p}(\Omega )$ such that $u_{\epsilon
}^{n_{l}(\epsilon )}\rightharpoonup u_{\epsilon }\in W_{0}^{1,p}(\Omega )$
(as $l\rightarrow \infty $) in $W^{1,p}(\Omega )-$weak. Now, passing to the
limit in (\ref{7}) as $l\rightarrow \infty $ we deduce%
\begin{equation}
\text{ \ }\int_{\Omega }A_{\epsilon }\nabla u_{\epsilon }\cdot \nabla
\varphi dx=\int_{\Omega }f\varphi dx\ ,\ \varphi \in \mathcal{D(}\Omega 
\mathcal{)}  \label{12}
\end{equation}

Whence $u_{\epsilon }$ is a weak solution of (\ref{2}) ($u_{\epsilon }=0$ on 
$\partial \Omega $ in the trace sense of $W^{1,p}-$functions, indeed the
trace operator is well defined since $\partial \Omega $ is Lipschitz).

Now, from (\ref{9}) and (\ref{10}) we deduce%
\begin{equation*}
\left\Vert u_{\epsilon }\right\Vert _{L^{p}(\Omega )}\leq \underset{%
l\rightarrow \infty }{\lim \inf }\left\Vert u_{\epsilon }^{n_{l}(\epsilon
)}\right\Vert _{L^{p}(\Omega )}\leq C^{\prime }
\end{equation*}

and similarly we obtain 
\begin{equation*}
\left\Vert \epsilon \nabla _{X_{1}}u_{\epsilon }\right\Vert _{L^{p}(\Omega )}%
\text{, }\left\Vert \nabla _{X_{2}}u_{\epsilon }\right\Vert _{L^{p}(\Omega
)}\leq C^{\prime }
\end{equation*}

Using reflexivity and continuity of the derivation operator on $\mathcal{D}%
^{\prime }(\Omega )$ one can extract a subsequence $(u_{\epsilon
_{k}})_{k\in 
\mathbb{N}
}$ such that $\nabla _{X_{2}}u_{\epsilon _{k}}\rightharpoonup \nabla
_{X_{2}}u_{0}$, $\epsilon _{k}\nabla _{X_{1}}u_{\epsilon
_{k}}^{n}\rightharpoonup 0$, $u_{\epsilon _{k}}\rightharpoonup u_{0}$ in $%
L^{p}(\Omega )-weak$. Passing to the limit in (\ref{12}) we get 
\begin{equation}
\int_{\Omega }A_{22}\nabla _{X_{2}}u_{0}\cdot \nabla _{X_{2}}\varphi
dx=\int_{\Omega }f\varphi dx\ ,\ \varphi \in \mathcal{D(}\Omega \mathcal{)}
\label{13}
\end{equation}

Now, we will prove that $u_{0}\in V_{p}.$ Since $\nabla _{X_{2}}u_{\epsilon
_{k}}\rightharpoonup \nabla _{X_{2}}u_{0}$ and $u_{\epsilon
_{k}}\rightharpoonup u_{0}$ in $L^{p}(\Omega )-weak$ then there exists a
sequence $(U_{n})_{n\in 
\mathbb{N}
}\subset conv(\left\{ u_{\epsilon _{k}}\right\} _{k\in 
\mathbb{N}
})$ such that $\nabla _{X_{2}}U_{n}\rightarrow \nabla _{X_{2}}u_{0}$ in $%
L^{p}(\Omega )-strong$, where $conv(\left\{ u_{\epsilon _{k}}\right\} _{k\in 
\mathbb{N}
})$ is the convex hull of the set $\left\{ u_{\epsilon _{k}}\right\} _{k\in 
\mathbb{N}
}$. Notice that we have $U_{n}\in W_{0}^{1,p}(\Omega )$ then -up to a
subsequence- we have $U_{n}(X_{1},\cdot )\in W_{0}^{1,p}(\Omega _{X_{1}})$,
a.e $X_{1}\in \Omega ^{1}$. And we also have -up to a subsequence- $\nabla
_{X_{2}}U_{n}(X_{1},\cdot )\rightarrow $ $\nabla _{X_{2}}u_{0}(X_{1},.)$ in $%
L^{p}(\Omega _{X_{1}})-strong$ a.e $X_{1}\in \Omega ^{1}$. Whence $%
u_{0}(X_{1},.)\in W_{0}^{1,p}(\Omega _{X_{1}})$ for a.e $X_{1}\in \Omega
^{1} $, so $u_{0}\in V_{p}$.

Finally, we will prove that $u_{0}$ is a solution of (\ref{5}). Let $E$ be a
Banach space, a family of vectors $\left\{ e_{n}\right\} _{n\in 
\mathbb{N}
}$ in $E$ is said to be a Banach basis or a Schauder basis of $E$ if for
every $x\in E$ there exists a family of scalars $(\alpha _{n})_{n\in 
\mathbb{N}
}$ such that $x=\dsum\limits_{n=0}^{\infty }\alpha _{n}e_{n}$, where the
series converges in the norm of $E$. Notice that Schauder basis does not
always exist. In \cite{5} P. Enflo has constructed a separable reflexive
Banach space without Schauder basis!. However, the Sobolev space $%
W_{0}^{1,r} $ ( $1<r<\infty $) has a Schauder basis whenever the boundary of
the domain is sufficiently smooth \cite{7}. Now, we are ready to finish the
proof. Let $(U_{i}\times V_{i})_{i\in 
\mathbb{N}
}$ be a countable covering of $\Omega $ such that $U_{i}\times V_{i}\subset
\Omega $ where $U_{i}\subset 
\mathbb{R}
^{q},V_{i}\subset 
\mathbb{R}
^{N-q}$ are two bounded open domains, where $\partial V_{i}$ is smooth ($%
V_{i}$ are Euclidian balls for example), such a covering always exists. Now,
fix $\psi \in \mathcal{D}(V_{i})$ then it follows from (\ref{13}) that for
every $\varphi \in \mathcal{D}(U_{i})$ we have%
\begin{equation*}
\int_{U_{i}}\varphi dX_{1}\int_{V_{i}}A_{22}\nabla _{X_{2}}u_{0}\cdot \nabla
_{X_{2}}\psi dX_{2}\ =\int_{U_{i}}\varphi dX_{1}\int_{V_{i}}f\psi dX_{2}\ 
\end{equation*}

Whence for a.e $X_{1}\in U_{i}$ we have 
\begin{equation*}
\int_{V_{i}}A_{22}(X_{1},\cdot )\nabla _{X_{2}}u_{0}(X_{1},\cdot )\cdot
\nabla _{X_{2}}\psi dX_{2}\ =\int_{V_{i}}f(X_{1},\cdot )\psi dX_{2}
\end{equation*}

Notice that by density we can take $\psi \in W_{0}^{1,p^{\prime }}(V_{i})$
where $p^{\prime }$ is the conjugate of $p$. Using the same techniques as in 
\cite{3}, where we use a Schauder basis of $W_{0}^{1,p^{\prime }}(V_{i})$
and a partition of the unity, one can easily obtain%
\begin{equation*}
\int_{\Omega _{X_{1}}}A_{22}(X_{1},\cdot )\nabla _{X_{2}}u_{0}(X_{1},\cdot
)\cdot \nabla _{X_{2}}\varphi dx=\int_{\Omega _{X_{1}}}f(X_{1},\cdot
)\varphi dx\text{, }\varphi \in \mathcal{D(}\Omega \mathcal{)}\text{,}
\end{equation*}

for a.e $X_{1}\in \Omega ^{1}$. Finally, since $u_{0}(X_{1},\cdot )\in
W_{0}^{1,p}(\Omega _{X_{1}})$ (as proved above) then $u_{0}(X_{1},\cdot )$
is a solution of (\ref{5}) (Notice that $\Omega _{X_{1}}$ is also a
Lipschitz domain so the trace operator is well defined).
\end{proof}

\subsection{Strong convergence}

Theorem 1 will be proved in three steps. the proof is based on the use of
the approximated problem (\ref{7}). In the first step, we shall construct
the solution of the limit problem

$\mathbf{Step1:}$ Let $u_{\epsilon }^{n}\in H_{0}^{1}(\Omega )$ be the
unique solution to (\ref{7}), existence and uniqueness of $u_{\epsilon }^{n}$
follows from assumptions (\ref{3}), (\ref{4}) as mentioned previously. One
have the following

\begin{proposition}
Assume (\ref{3}), (\ref{4}) then there exists $(u_{0}^{n})_{n\in 
\mathbb{N}
}\subset V_{2}$ such that $\epsilon u_{\epsilon }^{n}\rightarrow 0$ in $%
L^{2}(\Omega )$, $u_{\epsilon }^{n}\rightarrow u_{0}^{n}$ in $V_{2}$ for
every $n\in 
\mathbb{N}
$, in particular the two convergences holds in $L^{p}(\Omega )$ and $V_{p}$
respectively. And $u_{0}^{n}$ is the unique weak solution in $V_{2}$ to the
problem%
\begin{equation}
\left\{ 
\begin{array}{cc}
\func{div}_{X_{2}}(A_{22}(X_{1},\cdot )\nabla _{X_{2}}u_{0}^{n}(X_{1},\cdot
))=f_{n}(X_{1},\cdot )\text{, }X_{1}\in \Omega ^{1} &  \\ 
u_{0}^{n}(X_{1},\cdot )=0\text{ on }\partial \Omega _{X_{1}}\text{\ \ \ \ \
\ \ \ \ \ \ \ \ \ \ \ \ \ \ \ \ \ \ \ \ \ \ \ \ \ \ \ \ \ \ \ \ } & \text{ }%
\end{array}%
\right.  \label{14}
\end{equation}
\end{proposition}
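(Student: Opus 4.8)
The plan is to fix $n$ and study the family $(u_\epsilon^n)_{0<\epsilon\le 1}$ of solutions to the regularized problem (\ref{7}) as $\epsilon\to 0$. Since $f_n\in L^2(\Omega)$, this is exactly the $L^2$ situation already treated in \cite{3}, so the cleanest strategy is to mirror that $L^2$ argument to obtain convergence in $V_2$ and then simply observe that convergence in $V_2$ implies convergence in $V_p$ (and in $L^p$) because $\Omega$ is bounded and $p<2$, via H\"older's inequality on the embeddings $L^2(\Omega)\hookrightarrow L^p(\Omega)$ and $V_2\hookrightarrow V_p$.

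First I would derive the uniform-in-$\epsilon$ a priori bounds in the $L^2$ framework. Testing (\ref{7}) with $\varphi=u_\epsilon^n$ and using the ellipticity assumption (\ref{3}) gives
\begin{equation*}
\lambda\epsilon^2\int_\Omega|\nabla_{X_1}u_\epsilon^n|^2\,dx+\lambda\int_\Omega|\nabla_{X_2}u_\epsilon^n|^2\,dx\le\int_\Omega f_n u_\epsilon^n\,dx,
\end{equation*}
and then a Poincar\'e inequality in the $X_2$-variable (valid since $\Omega_{X_1}$ is bounded) together with Young's inequality yields bounds
\begin{equation*}
\|u_\epsilon^n\|_{L^2(\Omega)},\ \|\nabla_{X_2}u_\epsilon^n\|_{L^2(\Omega)},\ \|\epsilon\nabla_{X_1}u_\epsilon^n\|_{L^2(\Omega)}\le C
\end{equation*}
uniformly in $\epsilon$ (with $C$ depending on $\|f_n\|_{L^2}$). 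By reflexivity of $V_2$ and of $L^2(\Omega)$, I can extract a subsequence with $u_\epsilon^n\rightharpoonup u_0^n$ in $V_2$, $\epsilon\nabla_{X_1}u_\epsilon^n\rightharpoonup 0$, and identify the weak limit $u_0^n\in V_2$ as a weak solution of the limit problem (\ref{14}), exactly as in the weak-convergence theorem already proved in this section. Uniqueness of the $V_2$ solution of (\ref{14}) follows from the Lax--Milgram-type coercivity of the limit bilinear form (the matrix $A_{22}$ inherits ellipticity), which in turn shows the whole family converges, not just a subsequence.

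The substantive part is upgrading weak convergence in $V_2$ to strong convergence. Here I would use the standard energy-convergence trick: write the bilinear form associated with $A_\epsilon$ and test the difference $u_\epsilon^n-u_0^n$, or equivalently compare energies. Concretely, from (\ref{7}) and the limit equation (\ref{14}) one shows
\begin{equation*}
\lambda\epsilon^2\|\nabla_{X_1}u_\epsilon^n\|_{L^2}^2+\lambda\|\nabla_{X_2}(u_\epsilon^n-u_0^n)\|_{L^2}^2\le\int_\Omega f_n(u_\epsilon^n-u_0^n)\,dx+(\text{cross terms}),
\end{equation*}
where the right-hand side tends to $0$ by weak convergence and the fact that the cross terms involving $A_{12},A_{21}$ carry a factor $\epsilon$ and are controlled by the uniform bound on $\epsilon\nabla_{X_1}u_\epsilon^n$. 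This forces $\nabla_{X_2}u_\epsilon^n\to\nabla_{X_2}u_0^n$ strongly in $L^2(\Omega)$ and $\epsilon\nabla_{X_1}u_\epsilon^n\to 0$ strongly, hence $u_\epsilon^n\to u_0^n$ strongly in $V_2$. The main obstacle I anticipate is the careful handling of these off-diagonal cross terms so that the factor $\epsilon$ genuinely absorbs them; this is precisely where the block scaling of $A_\epsilon$ is used and where the computation from \cite{3} must be reproduced faithfully. Once strong $V_2$ convergence is in hand, the passage to $V_p$ and the $L^p$ statement for $\epsilon u_\epsilon^n$ are immediate from $\|\cdot\|_{L^p(\Omega)}\le \operatorname{mes}(\Omega)^{\frac1p-\frac12}\|\cdot\|_{L^2(\Omega)}$, completing the proof.
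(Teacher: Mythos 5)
Your proposal is correct and follows essentially the same route as the paper: the paper's proof of this proposition simply invokes the $L^{2}$-theory (Theorem 1 of \cite{3}) for the convergence $u_{\epsilon}^{n}\rightarrow u_{0}^{n}$ in $V_{2}$, and then obtains the $V_{p}$ and $L^{p}(\Omega)$ statements from the continuous embeddings $V_{2}\hookrightarrow V_{p}$ and $L^{2}(\Omega)\hookrightarrow L^{p}(\Omega)$ on the bounded domain, exactly as in your final step. The only difference is that you re-derive the $L^{2}$ result (uniform energy bounds, identification of the weak limit, slice-wise uniqueness, and the energy trick for strong convergence) instead of citing it; that is precisely the argument of \cite{3}, and it matches the computation the paper itself reproduces later in Proposition 4 for the semilinear problem.
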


\begin{proof}
This result follows from the $L^{2}-$theory (Theorem 1 in \cite{3}), The
convergences in $V_{p}$ and $L^{p}(\Omega )$ follow from the continuous
embedding $V_{2}\hookrightarrow V_{p}$, $L^{2}(\Omega )\hookrightarrow
L^{p}(\Omega )$ ($p<2$).
\end{proof}

Now, we construct $u_{0}$ the solution of the limit problem (\ref{5}).
Testing with $\varphi =\theta (u_{0}^{n}(X_{1},\cdot ))$ in the weak
formulation of (\ref{14}) ($\theta $ is the function introduced in
subsection 2.1) and estimating like in the proof of Theorem 3 we obtain as
in (\ref{8})%
\begin{eqnarray}
&&\left\Vert \nabla _{X_{2}}u_{0}^{n}(X_{1},\cdot )\right\Vert
_{L^{p}(\Omega _{X_{1}})}  \notag \\
&\leq &\left( \dfrac{\left\Vert f_{n}(X_{1},\cdot )\right\Vert
_{L^{p}(\Omega _{X_{1}})}}{\lambda (p-1)}\right) ^{\frac{1}{2}}\times \left(
\int_{\Omega _{X_{1}}}(1+\left\vert u_{0}^{n}(X_{1},\cdot )\right\vert
)^{p}dX_{2}\right) ^{\frac{1}{2p}}  \label{15}
\end{eqnarray}

Integrating over $\Omega ^{1}$ and using Cauchy-Schwaz's inequality in the
right hand side we get 
\begin{equation*}
\left\Vert \nabla _{X_{2}}u_{0}^{n}\right\Vert _{L^{p}(\Omega )}^{p}\leq
C\left\Vert f_{n}\right\Vert _{L^{p}(\Omega )}^{\frac{p}{2}}\left(
\int_{\Omega }(1+\left\vert u_{0}^{n}\right\vert )^{p}dx\right) ^{\frac{1}{2}%
}
\end{equation*}

and therefore 
\begin{equation*}
\left\Vert \nabla _{X_{2}}u_{0}^{n}\right\Vert _{L^{p}(\Omega )}^{2}\leq
C^{\prime }(1+\left\Vert u_{0}^{n}\right\Vert _{L^{p}(\Omega )})
\end{equation*}

Using Poincar\'{e}'s inequality $\left\Vert u_{0}^{n}\right\Vert
_{L^{p}(\Omega )}\leq C_{\Omega }\left\Vert \nabla
_{X_{2}}u_{0}^{n}\right\Vert _{L^{p}(\Omega )}$ ( which holds since $%
u_{0}^{n}(X_{1},\cdot )\in W_{0}^{1,p}(\Omega _{X_{1}})$ a.e $X_{1}\in
\Omega ^{1}$), one can obtain the estimate%
\begin{equation}
\left\Vert u_{0}^{n}\right\Vert _{L^{p}(\Omega )}\leq C^{\prime \prime }%
\text{ for every }n\in 
\mathbb{N}
\text{, }  \label{16}
\end{equation}

where $C^{\prime \prime }$ is independent of $n$. Now, using the linearity
of the problem and (\ref{14}) with the test function $\theta
(u_{0}^{n}(X_{1},\cdot )-u_{0}^{m}(X_{1},\cdot ))$, $m,n\in 
\mathbb{N}
$ one can obtain like in (\ref{15})%
\begin{multline*}
\left\Vert \nabla _{X_{2}}\left( u_{0}^{n}(X_{1},\cdot
)-u_{0}^{m}(X_{1},\cdot )\right) \right\Vert _{L^{p}(\Omega _{X_{1}})} \\
\leq \left( \dfrac{\left\Vert f_{n}(X_{1},\cdot )-f_{m}(X_{1},\cdot
)\right\Vert _{L^{p}(\Omega _{X_{1}})}}{\lambda (p-1)}\right) ^{\frac{1}{2}%
}\times \\
\left( \int_{\Omega _{X_{1}}}(1+\left\vert u_{0}^{n}(X_{1},\cdot
)-u_{0}^{m}(X_{1},\cdot )\right\vert )^{p}dX_{2}\right) ^{\frac{1}{2p}}
\end{multline*}

\bigskip integrating over $\Omega ^{1}$ and using Cauchy-Schwarz and (\ref%
{16}) yields%
\begin{equation*}
\left\Vert \nabla _{X_{2}}(u_{0}^{n}-u_{0}^{m})\right\Vert _{L^{p}(\Omega
)}\leq C\left\Vert f_{n}-f_{m}\right\Vert _{L^{p}(\Omega )}^{\frac{1}{2}},
\end{equation*}

where $C$ is independent of $m$ and $n$. The Poincar\'{e}'s inequality shows
that 
\begin{equation*}
\left\Vert u_{0}^{n}-u_{0}^{m}\right\Vert _{V_{p}}\leq C^{\prime }\left\Vert
f_{n}-f_{m}\right\Vert _{L^{p}(\Omega )}^{\frac{1}{2}}
\end{equation*}

Since $(f_{n})_{n\in 
\mathbb{N}
}$ is a converging sequence in $L^{p}(\Omega )$ then this last inequality
shows that $(u_{0}^{n})_{n\in 
\mathbb{N}
}$ is a Cauchy sequence in $V_{p}$, consequently there exists $u_{0}\in
V_{p} $ such that $u_{0}^{n}\rightarrow u_{0}$ in $V_{p}$. Now, passing to
the limit in (\ref{7}) as $\epsilon \rightarrow 0$ we get 
\begin{equation*}
\dint\limits_{\Omega }A_{22}\nabla _{X_{2}}u_{0}^{n}\cdot \nabla
_{X_{2}}\varphi dX_{2}=\dint\limits_{\Omega }f_{n}\varphi dX_{2}\text{, }%
\varphi \in \mathcal{D}(\Omega )
\end{equation*}

Passing to the limit as $n\rightarrow \infty $ we deduce 
\begin{equation*}
\int_{\Omega }A_{22}\nabla _{X_{2}}u_{0}\cdot \nabla _{X_{2}}\varphi
dX_{2}=\int_{\Omega }f\varphi dX_{2}\text{, }\varphi \in \mathcal{D}(\Omega )
\end{equation*}

\bigskip Then it follows as proved in Theorem 3 that $u_{0}$ satisfies (\ref%
{5}). Whence we have proved the following

\begin{proposition}
Under assumption of Proposition 1 there exists $u_{0}\in V_{p}$ solution to (%
\ref{5}) such that $u_{0}^{n}\rightarrow u_{0}$ in $V_{p}$ where $%
(u_{0}^{n})_{n\in 
\mathbb{N}
}$ is the sequence given in Proposition 1
\end{proposition}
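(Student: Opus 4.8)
The plan is to produce $u_0$ as the strong $V_p$-limit of the sequence $(u_0^n)$ supplied by Proposition 1, by proving that this sequence is Cauchy in $V_p$, and then to identify the limit as a solution of (\ref{5}) by passing to the limit in the weak formulation of (\ref{14}). Since $V_p$ is a Banach space, completeness will furnish the limit once the Cauchy property is established.

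First I would establish an $n$-independent bound on $(u_0^n)$ in $V_p$. Testing the weak formulation of (\ref{14}) with $\theta(u_0^n(X_1,\cdot))$ -- admissible because $\theta'\le 1$ and $\theta(0)=0$, so $\theta$ maps $W_0^{1,p}(\Omega_{X_1})$ into itself -- and running the same computation that produced (\ref{8}) (ellipticity, the pointwise bound $|\theta(t)|\le \frac{2}{p-1}(1+|t|)^{p-1}$, and H\"older to convert the weighted energy into the $L^p$ gradient norm) yields the slicewise estimate (\ref{15}). Integrating over $\Omega^1$, applying Cauchy--Schwarz on the right-hand side and then Poincar\'e's inequality in each slice (legitimate since $u_0^n(X_1,\cdot)\in W_0^{1,p}(\Omega_{X_1})$ a.e.) absorbs the $L^p$-norm of $u_0^n$ and delivers the uniform bound (\ref{16}).

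The decisive step is the Cauchy estimate, which exploits the linearity of (\ref{14}): the difference $u_0^n-u_0^m$ solves the same limit problem with data $f_n-f_m$. Testing with $\theta(u_0^n(X_1,\cdot)-u_0^m(X_1,\cdot))$ and repeating the previous estimate, then integrating over $\Omega^1$ and using Cauchy--Schwarz together with (\ref{16}) to control the $L^p$-factor, one obtains $\|\nabla_{X_2}(u_0^n-u_0^m)\|_{L^p(\Omega)}\le C\|f_n-f_m\|_{L^p(\Omega)}^{1/2}$, whence by Poincar\'e $\|u_0^n-u_0^m\|_{V_p}\le C'\|f_n-f_m\|_{L^p(\Omega)}^{1/2}$. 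As $(f_n)$ converges in $L^p(\Omega)$ it is Cauchy, so $(u_0^n)$ is Cauchy in $V_p$ and converges to some $u_0\in V_p$.

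It remains to identify $u_0$ as a solution of (\ref{5}). Passing to the limit in (\ref{7}) as $\epsilon\to 0$ (using the $V_2$-convergence of Proposition 1) gives the identity $\int_\Omega A_{22}\nabla_{X_2}u_0^n\cdot\nabla_{X_2}\varphi\,dx=\int_\Omega f_n\varphi\,dx$ for all $\varphi\in\mathcal{D}(\Omega)$; letting $n\to\infty$ and using the strong $V_p$-convergence $u_0^n\to u_0$ together with $f_n\to f$ in $L^p(\Omega)$ produces the analogous identity for $u_0$ and $f$. From this global identity the localization device of Theorem 3 -- a countable covering by products $U_i\times V_i$, factored test functions $\varphi(X_1)\psi(X_2)$, a Schauder basis of $W_0^{1,p'}(V_i)$ and a partition of unity -- disintegrates the equation into its slicewise form, and since $u_0(X_1,\cdot)\in W_0^{1,p}(\Omega_{X_1})$ a.e.\ this shows $u_0(X_1,\cdot)$ solves (\ref{5}) for a.e.\ $X_1\in\Omega^1$. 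I expect the main obstacle to be the square-root Cauchy estimate: its $n$-uniformity rests entirely on the bound (\ref{16}) and on the choice of the $\theta$-test function, the natural replacement for testing with the solution $u$ itself when $p<2$ and $f\notin L^2$, so that the weighted energy $\int(1+|u|)^{p-2}|\nabla_{X_2}u|^2$ interpolates against the $L^p$ gradient norm via H\"older.
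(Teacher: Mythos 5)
Your proposal is correct and follows essentially the same path as the paper's own proof: the slicewise estimate obtained by testing (\ref{14}) with $\theta(u_0^n(X_1,\cdot))$ leading to the uniform bound (\ref{16}), the square-root Cauchy estimate $\left\Vert u_0^n-u_0^m\right\Vert_{V_p}\leq C\left\Vert f_n-f_m\right\Vert_{L^p(\Omega)}^{1/2}$ via linearity and $\theta(u_0^n-u_0^m)$, completeness of $V_p$, and finally the passage to the limit in the weak formulation followed by the localization argument of Theorem 3 to identify $u_0(X_1,\cdot)$ as a solution of (\ref{5}). No discrepancies of substance; this is the paper's argument.
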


$\mathbf{Step2:}$ In this second step we will construct the sequence $%
(u_{\epsilon })_{0<\epsilon \leq 1}$ solutions of (\ref{2}), one can prove
the following

\begin{proposition}
There exists a sequence $(u_{\epsilon })_{0<\epsilon \leq 1}\subset
W_{0}^{1,p}(\Omega )$ of weak solutions to (\ref{2}) such that $u_{\epsilon
}^{n}\rightarrow u_{\epsilon }$ in $W^{1,p}(\Omega )$ for every $\epsilon $
fixed. Moreover, $u_{\epsilon }^{n}\rightarrow u_{\epsilon }$ in $V_{p}$ and 
$\epsilon \nabla _{X_{2}}u_{\epsilon }^{n}\rightarrow \epsilon \nabla
_{X_{2}}u_{\epsilon }$, uniformly in $\epsilon $.
\end{proposition}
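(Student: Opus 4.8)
The plan is to exploit the linearity of (\ref{7}) and to rerun, for the difference of two regularized solutions, the estimates already obtained in the proof of Theorem 3, keeping careful track of which constants are independent of $\epsilon$. Fix $\epsilon$ and $m,n\in \mathbb{N}$ and put $w=u_{\epsilon}^{n}-u_{\epsilon}^{m}\in H_{0}^{1}(\Omega)$. By linearity $w$ solves (\ref{7}) with right-hand side $f_{n}-f_{m}\in L^{2}(\Omega)$. Since $w\in H_{0}^{1}(\Omega)$, I may test with $\theta(w)$ (the function of subsection 2.1) and repeat verbatim the chain of inequalities leading to (\ref{8}) and (\ref{10}), using the ellipticity of $A_{\epsilon}$ in the form $A_{\epsilon}\xi\cdot\xi\geq\lambda(\epsilon^{2}\left\vert\xi_{1}\right\vert^{2}+\left\vert\xi_{2}\right\vert^{2})$ and the bound $\left\vert\theta(t)\right\vert\leq 2(1+\left\vert t\right\vert)^{p-1}/(p-1)$. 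This yields
\[
\left\Vert\nabla_{X_{2}}w\right\Vert_{L^{p}(\Omega)},\ \left\Vert\epsilon\nabla_{X_{1}}w\right\Vert_{L^{p}(\Omega)}\leq C\left\Vert f_{n}-f_{m}\right\Vert_{L^{p}(\Omega)}^{\frac{1}{2}}\left(\int_{\Omega}(1+\left\vert w\right\vert)^{p}dx\right)^{\frac{1}{2p}}.
\]

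The crucial point is that the last factor is bounded uniformly in $n,m,\epsilon$: since $\left\vert w\right\vert\leq\left\vert u_{\epsilon}^{n}\right\vert+\left\vert u_{\epsilon}^{m}\right\vert$ and the $L^{p}$-norms of $u_{\epsilon}^{n}$ are controlled by (\ref{9}) independently of $n$ and $\epsilon$, one has $\int_{\Omega}(1+\left\vert w\right\vert)^{p}dx\leq C$. Hence $\left\Vert\nabla_{X_{2}}w\right\Vert_{L^{p}}$ and $\left\Vert\epsilon\nabla_{X_{1}}w\right\Vert_{L^{p}}$ are both bounded by $C\left\Vert f_{n}-f_{m}\right\Vert_{L^{p}}^{1/2}$ with $C$ independent of $\epsilon$. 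Combining the first bound with the Poincar\'{e} inequality $\left\Vert w\right\Vert_{L^{p}(\Omega)}\leq C_{\Omega}\left\Vert\nabla_{X_{2}}w\right\Vert_{L^{p}(\Omega)}$ (valid since $w(X_{1},\cdot)\in W_{0}^{1,p}(\Omega_{X_{1}})$) gives
\[
\left\Vert u_{\epsilon}^{n}-u_{\epsilon}^{m}\right\Vert_{V_{p}}\leq C\left\Vert f_{n}-f_{m}\right\Vert_{L^{p}(\Omega)}^{\frac{1}{2}},
\]
still uniformly in $\epsilon$. As $(f_{n})$ converges in $L^{p}(\Omega)$ it is Cauchy, so $(u_{\epsilon}^{n})_{n}$ is Cauchy in $V_{p}$ uniformly in $\epsilon$, and the completeness of $V_{p}$ produces the limit $u_{\epsilon}$.

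To upgrade to $W^{1,p}(\Omega)$ for fixed $\epsilon$, I divide the bound on $\left\Vert\epsilon\nabla_{X_{1}}w\right\Vert_{L^{p}}$ by $\epsilon$, which together with the $V_{p}$ estimate shows that $(u_{\epsilon}^{n})_{n}$ is Cauchy in $W^{1,p}(\Omega)$ for each fixed $\epsilon$; the constant now depends on $\epsilon$, which is exactly why the full-gradient convergence cannot be asserted uniformly. Thus $u_{\epsilon}^{n}\rightarrow u_{\epsilon}$ in $W_{0}^{1,p}(\Omega)$, and passing to the limit $n\rightarrow\infty$ in (\ref{7}) (using $\nabla u_{\epsilon}^{n}\rightarrow\nabla u_{\epsilon}$ in $L^{p}$, boundedness of $A_{\epsilon}$, and $f_{n}\rightarrow f$ in $L^{p}$) recovers (\ref{12}), so each $u_{\epsilon}$ is a weak solution of (\ref{2}). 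Finally, letting $m\rightarrow\infty$ in the uniform-in-$\epsilon$ Cauchy estimates gives $u_{\epsilon}^{n}\rightarrow u_{\epsilon}$ in $V_{p}$ and $\epsilon\nabla_{X_{1}}u_{\epsilon}^{n}\rightarrow\epsilon\nabla_{X_{1}}u_{\epsilon}$ uniformly in $\epsilon$ (the convergence of $\epsilon\nabla_{X_{2}}u_{\epsilon}^{n}$ being already contained in the $V_{p}$ convergence).

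The main obstacle is essentially bookkeeping rather than a new idea: one must isolate, in the estimates of Theorem 3 applied to the difference $w$, the factor $\int_{\Omega}(1+\left\vert w\right\vert)^{p}dx$ and verify through (\ref{9}) that it, and hence the whole constant $C$, does not degenerate as $\epsilon\rightarrow 0$. The only genuinely delicate choice is to run the argument on the $L^{2}$-data problems $u_{\epsilon}^{n}$ (so that $w\in H_{0}^{1}$ and $\theta(w)$ is an admissible test function) and to separate the two regimes: the $V_{p}$ and $\epsilon\nabla_{X_{1}}$ convergences are uniform in $\epsilon$, whereas recovering the bare $\nabla_{X_{1}}$ derivative costs a factor $\epsilon^{-1}$ and is therefore only available for each fixed $\epsilon$.
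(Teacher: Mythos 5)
Your proposal is correct and follows essentially the same route as the paper: exploit linearity of (\ref{7}), test the difference $u_{\epsilon}^{n}-u_{\epsilon}^{m}$ with $\theta(u_{\epsilon}^{n}-u_{\epsilon}^{m})$, use the uniform bounds (\ref{9}) to control the factor $\int_{\Omega}(1+\left\vert u_{\epsilon}^{n}-u_{\epsilon}^{m}\right\vert)^{p}dx$, obtain Cauchy estimates in $V_{p}$ (uniform in $\epsilon$) and in $W^{1,p}$ (at cost $\epsilon^{-1}$, for fixed $\epsilon$), and pass to the limit in (\ref{7}). Your version is in fact slightly more explicit than the paper's (the uniform bound on the $(1+\left\vert w\right\vert)^{p}$ integral is only implicit there), and you correctly read the paper's ``$\epsilon\nabla_{X_{2}}$'' in the statement and in (\ref{18}) as a typo for $\epsilon\nabla_{X_{1}}$, which is the derivative the uniform estimate actually concerns.
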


\begin{proof}
Using the linearity of (\ref{7}) testing with $\theta (u_{\epsilon
}^{n}-u_{\epsilon }^{m})$, $m,n\in 
\mathbb{N}
$ we obtain as in (\ref{8}) 
\begin{equation*}
\left\Vert \nabla _{X_{2}}u_{\epsilon }^{n}-u_{\epsilon }^{m}\right\Vert
_{L^{p}(\Omega )}\leq \left( \dfrac{\left\Vert f_{n}-f_{m}\right\Vert
_{L^{p}}}{\lambda (p-1)}\right) ^{\frac{1}{2}}\left( \int_{\Omega
}(1+\left\vert u_{\epsilon }^{n}-u_{\epsilon }^{m}\right\vert )^{p}\right) ^{%
\frac{1}{2p}}
\end{equation*}

And (\ref{9}) gives 
\begin{equation*}
\left\Vert \nabla _{X_{2}}(u_{\epsilon }^{n}-u_{\epsilon }^{m})\right\Vert
_{L^{p}(\Omega )}\leq C\left\Vert f_{n}-f_{m}\right\Vert _{L^{p}}^{\frac{1}{2%
}}
\end{equation*}

where $C$ is independent of $\epsilon $ and $n$, whence Poincar\'{e}'s
inequality implies 
\begin{equation}
\left\Vert u_{\epsilon }^{n}-u_{\epsilon }^{m}\right\Vert _{V_{p}}\leq
C^{\prime }\left\Vert f_{n}-f_{m}\right\Vert _{L^{p}}^{\frac{1}{2}}
\label{17}
\end{equation}

Similarly we obtain 
\begin{equation}
\left\Vert \epsilon \nabla _{X_{2}}(u_{\epsilon }^{n}-u_{\epsilon
}^{m})\right\Vert _{L^{p}(\Omega )}\leq C^{\prime \prime }\left\Vert
f_{n}-f_{m}\right\Vert _{L^{p}}^{\frac{1}{2}}  \label{18}
\end{equation}

its follows that 
\begin{equation*}
\left\Vert u_{\epsilon }^{n}-u_{\epsilon }^{m}\right\Vert _{W^{1,p}(\Omega
)}\leq \frac{C}{\epsilon }\left\Vert f_{n}-f_{m}\right\Vert _{L^{p}}^{\frac{1%
}{2}}
\end{equation*}

The last inequality implies that for every $\epsilon $ fixed $(u_{\epsilon
}^{n})_{n\in 
\mathbb{N}
}$ is a Cauchy sequence in $W_{0}^{1,p}(\Omega )$, Then there exists $%
u_{\epsilon }\in W_{0}^{1,p}(\Omega )$ such that $u_{\epsilon
}^{n}\rightarrow u_{\epsilon }$ in $W^{1,p}(\Omega )$, then the passage to
the limit in (\ref{7}) shows that $u_{\epsilon }$ is a weak solution of (\ref%
{2}). Finally (\ref{17}) and (\ref{18}) show that $u_{\epsilon
}^{n}\rightarrow u_{\epsilon }$ (resp $\epsilon \nabla _{X_{2}}u_{\epsilon
}^{n}\rightarrow \epsilon \nabla _{X_{2}}u_{\epsilon }$) in $V_{p}$ ( resp
in $L^{p}(\Omega )$) uniformly in $\epsilon $.
\end{proof}

$\mathbf{Step3:}$ Now, we are ready to conclude. Proposition 1, 2 and 3
combined with the triangular inequality show that $u_{\epsilon }\rightarrow
u_{0}$ in $V_{p}$ and $\epsilon \nabla _{X_{2}}u_{\epsilon }\rightarrow 0$
in $L^{p}(\Omega )$, and the proof of Theorem 1 is finished.

\subsection{Convergence of the entropy solutions}

As mentioned in section 1 the entropy solution $u_{\epsilon }$ of (\ref{2})
exists and it is unique. We shall construct this entropy solution. Using the
approximated problem (\ref{7}), one has a $W^{1,p}-$strongly converging
sequence $u_{\epsilon }^{n}\rightarrow u_{\epsilon }\in W_{0}^{1,p}(\Omega )$
as shown in Proposition 3. We will show that $u_{\epsilon }\in \mathcal{T}%
_{0}^{1,2}(\Omega )$. Clearly we have$T_{k}(u_{\epsilon }^{n})\in
H_{0}^{1}(\Omega )$ for every $k>0$. Now testing with $T_{k}(u_{\epsilon
}^{n})$ in (\ref{7}) we obtain 
\begin{equation*}
\int_{\Omega }A_{\epsilon }\nabla u_{\epsilon }^{n}\cdot \nabla
T_{k}(u_{\epsilon }^{n})dx=\int_{\Omega }f_{n}T_{k}(u_{\epsilon }^{n})dx
\end{equation*}

Using the ellipticity assumption we get%
\begin{equation}
\int_{\Omega }\left\vert \nabla T_{k}(u_{\epsilon }^{n})\right\vert ^{2}\leq 
\frac{Mk}{\lambda (1+\epsilon ^{2})}  \label{19}
\end{equation}

Fix $\epsilon ,k$, we have $u_{\epsilon }^{n}\rightarrow u_{\epsilon }$ in $%
L^{p}(\Omega )$ then there exists a subsequence $(u_{\epsilon
}^{n_{l}})_{l\in 
\mathbb{N}
}$ such that $u_{\epsilon }^{n_{l}}\rightarrow u_{\epsilon }$ a.e $x\in
\Omega $ and since $T_{k}$ is bounded then it follows that $%
T_{k}(u_{\epsilon }^{n_{l}})\rightarrow T_{k}(u_{\epsilon })$ a.e in $\Omega 
$ and strongly in $L^{2}(\Omega )$ whence $u_{\epsilon }\in \mathcal{T}%
_{0}^{1,2}(\Omega )$.

It follows by (\ref{19}) that there exists a subsequence still labelled $%
T_{k}(u_{\epsilon }^{n_{l}})$ such that $\nabla T_{k}(u_{\epsilon
}^{n_{l}})\rightarrow v_{\epsilon ,k}\in L^{2}(\Omega )$.The continuity of $%
\nabla $ on $\mathcal{D}^{\prime }\mathcal{(}\Omega \mathcal{)}$ implies
that $v_{\epsilon ,k}=\nabla T_{k}(u_{\epsilon })$, whence $%
T_{k}(u_{\epsilon }^{n_{l}})\rightarrow T_{k}(u_{\epsilon })$ in $%
H^{1}(\Omega )$. Now, since $T_{k}(u_{\epsilon }^{n_{l}})\in
H_{0}^{1}(\Omega )$ then we deduce that $T_{k}(u_{\epsilon })\in
H_{0}^{1}(\Omega )$.

It follows \cite{2} that 
\begin{equation*}
\int_{\Omega }A_{\epsilon }\nabla u_{\epsilon }\cdot \nabla
T_{k}(u_{\epsilon }-\varphi )dx\leq \int_{\Omega }fT_{k}(u_{\epsilon
}-\varphi )dx
\end{equation*}%
Whence $u_{\epsilon }$ is the entropy solution of (\ref{2}). Similarly the
function $u_{0}$ (constructed in Proposition 2) is the entropy solution to (%
\ref{5}) for a.e $X_{1}$ The uniqueness of $u_{0}$ in $V_{p}$ follows from
the uniqueness of the entropy solution of problem (\ref{5}). Finally, the
convergences given in Theorem 2 follows from Theorem 1.

\begin{remark}
Uniqueness of the entropy solutions implies that it does not depend on the
choice of the approximated sequence $(f_{n})_{n}$.
\end{remark}

\subsection{A regularity result for the entropy solution of the limit problem%
}

In this subsection we assume that $\Omega =\omega _{1}\times \omega _{2}$
where $\omega _{1},$ $\omega _{2}$ are two bounded Lipschitz domains of $%
\mathbb{R}
^{q}$, $%
\mathbb{R}
^{N-q}$ respectively. We introduce the space 
\begin{equation*}
W_{p}=\left\{ u\in L^{p}(\Omega )\mid \nabla _{X_{1}}u\in L^{p}(\Omega
)\right\}
\end{equation*}
We suppose the following 
\begin{equation}
f\in W_{p}\text{ and }A_{22}(x)=A_{22}(X_{2})\text{ i.e }A_{22}\text{ is
independent of }X_{1}  \label{21}
\end{equation}

\begin{theorem}
Assume (\ref{3}), (\ref{4}), (\ref{21}) then $u_{0}\in W^{1,p}(\Omega )$,
where $u_{0}$ is the entropy solution of (\ref{5}).
\end{theorem}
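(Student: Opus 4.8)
The plan is to prove that $\nabla_{X_1}u_0\in L^p(\Omega)$; together with $u_0\in V_p$ (already established) this gives $u_0\in W^{1,p}(\Omega)$. Since by the Remark the limit solution $u_0$ is independent of the approximating sequence, I would first select $(f_n)\subset L^2(\Omega)$ with $f_n\to f$ in $W_p$, which is possible because $f\in W_p$; in particular $\|\nabla_{X_1}f_n\|_{L^p(\Omega)}\le M$ uniformly in $n$. Let $(u_0^n)\subset V_2$ be the associated solutions of (\ref{14}) from Proposition 1, which converge to $u_0$ in $V_p$ by Proposition 2. The strategy is to bound the $X_1$-difference quotients of $u_0^n$ uniformly in $n$ and then pass to the limit.

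Fix $n$, a unit vector $e\in\mathbb{R}^q$ and a small $h>0$, and set $v^h(x)=\bigl(u_0^n(X_1+he,X_2)-u_0^n(X_1,X_2)\bigr)/h$ on the admissible set $\Omega_h=\{x\in\Omega:X_1+he\in\omega_1\}$. Here the cylindrical structure $\Omega=\omega_1\times\omega_2$ is essential: translating in the $X_1$ variable keeps the section $\Omega_{X_1}=\omega_2$ and the homogeneous condition on $\partial\omega_2$ unchanged, so $v^h(X_1,\cdot)\in W_0^{1,2}(\omega_2)$ for a.e.\ $X_1$. Moreover, since $A_{22}=A_{22}(X_2)$ is independent of $X_1$, subtracting the weak formulation of (\ref{14}) at $X_1+he$ and at $X_1$ shows that, for a.e.\ $X_1$, $v^h(X_1,\cdot)$ is the $V_2$-solution on $\omega_2$ of the same limit problem with right-hand side the difference quotient $g^h(X_1,\cdot):=\bigl(f_n(X_1+he,\cdot)-f_n(X_1,\cdot)\bigr)/h$, which lies in $L^2$.

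Now I would apply to $v^h$ exactly the a priori estimate already used for $u_0^n$: testing the equation for $v^h$ with $\theta(v^h)$ and arguing as in (\ref{15})--(\ref{16}) yields $\|v^h\|_{V_p(\Omega_h)}\le C(\|g^h\|_{L^p})$, where the dependence enters only through the $L^p$-norm of the data. The decisive point is that one must estimate the difference quotient $v^h$ as a solution in its own right, rather than differencing the estimate (\ref{15}); the latter loses a square root and is useless for difference quotients. Since $\|g^h\|_{L^p(\Omega_h)}\le\|\nabla_{X_1}f_n\|_{L^p(\Omega)}\le M$ uniformly in $h$ and $n$, the bound $\|v^h\|_{V_p}\le C(M)$ is uniform in both $h$ and $n$; in particular $\|v^h\|_{L^p}\le C(M)$.

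Finally, because $1<p<\infty$ makes $L^p$ reflexive, the difference-quotient characterization of Sobolev functions applies: the uniform bound $\sup_{h}\|v^h\|_{L^p(\Omega_h)}\le C(M)$ over all admissible $h$ and directions $e$ forces $\nabla_{X_1}u_0^n\in L^p(\Omega)$ with $\|\nabla_{X_1}u_0^n\|_{L^p(\Omega)}\le C(M)$, uniformly in $n$. As $u_0^n\to u_0$ in $V_p$, hence in $L^p(\Omega)$, while $(\nabla_{X_1}u_0^n)_n$ is bounded in the reflexive space $L^p(\Omega)$, a subsequence converges weakly, and the continuity of differentiation on $\mathcal{D}'(\Omega)$ identifies the weak limit as $\nabla_{X_1}u_0$. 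Thus $\nabla_{X_1}u_0\in L^p(\Omega)$, and combined with $u_0\in V_p$ this gives $u_0\in W^{1,p}(\Omega)$. The main obstacle is precisely the square-root loss in the $\theta$-estimate, which I circumvent by applying the uniform bound (\ref{16}) to the difference quotient itself; a secondary technical point is that translations exist only away from $\partial\omega_1$, handled by taking the bound over the admissible set $\Omega_h$, which exhausts $\Omega$ as $h\to0$.
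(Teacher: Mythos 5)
Your proposal is correct and follows essentially the same route as the paper's proof: difference quotients in the $X_1$ directions applied to the approximating solutions $u_{0}^{n}$ of (\ref{14}), exploiting the cylindrical geometry and $A_{22}=A_{22}(X_{2})$ so that the difference quotient solves the same limit problem with the differenced data, then a $\theta$-type a priori estimate for that difference quotient viewed as a solution in its own right, and finally the difference-quotient characterization of $W^{1,p}$. The only inessential variations are that the paper works with the homogeneous $\theta _{\delta }$ (letting $\delta \rightarrow 0$, which gives a clean linear bound rather than your inhomogeneous one) and passes to the limit $n\rightarrow \infty $ for fixed $h$ before invoking $f\in W_{p}$ (so it needs only $f_{n}\rightarrow f$ in $L^{p}$), whereas you bound $\nabla _{X_{1}}u_{0}^{n}$ uniformly in $n$ by choosing an approximating sequence converging in $W_{p}$ and then take a weak limit.
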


\begin{proof}
Let $(u_{0}^{n})$ the sequence constructed in subsection 2.2, we have $%
u_{0}^{n}\rightarrow u_{0}$ in $V_{p}$, where $u_{0}$ is the entropy
solution of (\ref{5}) as mentioned in the above subsection.

Let $\omega _{1}^{\prime }\subset \subset \omega _{1}$ be an open subset$,$
for $0<h<d(\partial \omega _{1},\omega _{1}^{\prime })$ and for $X_{1}\in
\omega _{1}^{\prime }$ we set $\tau
_{h}^{i}u_{0}^{n}=u_{0}^{n}(X_{1}+he_{i},X_{2})$ where $e_{i}=(0,..,1,..,0)$
then we have by (\ref{14}) 
\begin{equation*}
\int_{\omega _{2}}A_{22}\nabla _{X_{2}}(\tau
_{h}^{i}u_{0}^{n}-u_{0}^{n})\cdot \nabla _{X_{2}}\varphi dX_{2}=\int_{\omega
_{2}}(\tau _{h}^{i}f_{n}-f_{n})\varphi dX_{2}\text{ ,\ \ }\varphi \in 
\mathcal{D}(\omega _{2})
\end{equation*}

where we have used $A_{22}(x)=A_{22}(X_{2}).$

We introduce the function $\theta _{\delta }(t)=\dint\limits_{0}^{t}\left(
\delta +\left\vert s\right\vert \right) ^{p-2}ds$, $\delta >0$, $t\in 
\mathbb{R}
$ we have $0<\theta _{\delta }^{\prime }(t)=\left( \delta +\left\vert
t\right\vert \right) ^{p-2}\leq \delta ^{p-2}$ and $\left\vert \theta
_{\delta }(t)\right\vert \leq \frac{2(\delta +\left\vert t\right\vert )^{p-1}%
}{p-1}$

Testing with $\varphi =\frac{1}{h}\theta _{\delta }(\frac{\tau
_{h}^{i}u_{0}^{n}-u_{0}^{n}}{h})\in H_{0}^{1}(\omega _{2})$. To make the
notations less heavy we set%
\begin{equation*}
U=\frac{\tau _{h}^{i}u_{0}^{n}-u_{0}^{n}}{h}\text{, }\frac{\text{ }(\tau
_{h}^{i}f_{n}-f_{n})}{h}=F
\end{equation*}

Then we get 
\begin{equation*}
\int_{\omega _{2}}\theta _{\delta }^{\prime }(U)A_{22}\nabla _{X_{2}}U\cdot
\nabla _{X_{2}}UdX_{2}=\int_{\omega _{2}}F\theta _{\delta }(U)dX_{2}\text{ }
\end{equation*}

Using the ellipticity assumption for the left hand side and H\"{o}lder's
inequality for the right hand side of the previous inequality we deduce 
\begin{equation*}
\lambda \int_{\omega _{2}}\theta _{\delta }^{\prime }(U)\left\vert \nabla
_{X_{2}}U\right\vert ^{2}dX_{2}\leq \frac{2}{p-1}\left\Vert F\right\Vert
_{L^{p}(\omega _{2})}\left( \int_{\omega _{2}}\left( \delta +\left\vert
U\right\vert \right) ^{p}dX_{2}\right) ^{\frac{p-1}{p}}
\end{equation*}

Using H\"{o}lder's inequality we derive 
\begin{multline*}
\left\Vert \nabla _{X_{2}}U\right\Vert _{L^{p}(\omega _{2})}^{p}\leq \left(
\int_{\omega _{2}}\theta _{\delta }^{\prime }(U)\left\vert \nabla
_{X_{2}}U\right\vert ^{2}dX_{2}\right) ^{\frac{p}{2}}\left( \int_{\omega
_{2}}\theta _{\delta }^{\prime }(U)^{^{\frac{p}{p-2}}}dX_{2}\right) ^{\frac{%
2-p}{2}} \\
\leq \left( \frac{2}{\lambda (p-1)}\left\Vert F\right\Vert _{L^{p}(\omega
_{2})}\left( \int_{\omega _{2}}\left( \delta +\left\vert U\right\vert
\right) ^{p}dX_{2}\right) ^{\frac{p-1}{p}}\right) ^{\frac{p}{2}}\times \\
\left( \int_{\omega _{2}}\theta _{\delta }^{\prime }(U)^{^{\frac{p}{p-2}%
}}dX_{2}\right) ^{\frac{2-p}{2}}
\end{multline*}

Then we deduce%
\begin{equation*}
\left\Vert \nabla _{X_{2}}U\right\Vert _{L^{p}(\omega _{2})}^{2}\leq \frac{2%
}{\lambda (p-1)}\left\Vert F\right\Vert _{L^{p}(\omega _{2})}\left(
\int_{\omega _{2}}\left( \delta +\left\vert U\right\vert \right)
^{p}dX_{2}\right) ^{\frac{1}{p}}
\end{equation*}

Now passing to the limit as $\delta \rightarrow 0$ using the Lebesgue
theorem we deduce%
\begin{equation*}
\left\Vert \nabla _{X_{2}}U\right\Vert _{L^{p}(\omega _{2})}^{2}\leq \frac{2%
}{\lambda (p-1)}\left\Vert F\right\Vert _{L^{p}(\omega _{2})}\left(
\int_{\omega _{2}}\left( \left\vert U\right\vert \right) ^{p}dX_{2}\right) ^{%
\frac{1}{p}},
\end{equation*}

and Poincar\'{e}'s inequality gives 
\begin{equation*}
\left\Vert \nabla _{X_{2}}U\right\Vert _{L^{p}(\omega _{2})}\leq \frac{%
2C_{\omega _{2}}}{\lambda (p-1)}\left\Vert F\right\Vert _{L^{p}(\omega _{2})}
\end{equation*}

Now, integrating over $\omega _{1}^{\prime }$ yields 
\begin{equation*}
\left\Vert \frac{\tau _{h}^{i}u_{0}^{n}-u_{0}^{n}}{h}\right\Vert
_{L^{p}(\omega _{1}^{\prime }\times \omega _{2})}\leq \frac{2C_{\omega _{2}}%
}{\lambda (p-1)}\left\Vert \frac{\text{ }(\tau _{h}^{i}f_{n}-f_{n})}{h}%
\right\Vert _{L^{p}(\omega _{1}^{\prime }\times \omega _{2})}
\end{equation*}

Passing to the limit as $n\rightarrow \infty $ using the invariance of the
Lebesgue measure under translations we get 
\begin{equation*}
\left\Vert \frac{\tau _{h}^{i}u_{0}-u_{0}}{h}\right\Vert _{L^{p}(\omega
_{1}^{\prime }\times \omega _{2})}\leq \frac{2C_{\omega _{2}}}{\lambda (p-1)}%
\left\Vert \frac{\text{ }(\tau _{h}^{i}f-f)}{h}\right\Vert _{L^{p}(\omega
_{1}^{\prime }\times \omega _{2})}
\end{equation*}

Whence, since $f\in W_{p}$ then 
\begin{equation*}
\left\Vert \frac{\tau _{h}^{i}u_{0}-u_{0}}{h}\right\Vert _{L^{p}(\omega
_{1}^{\prime }\times \omega _{2})}\leq C,
\end{equation*}

where $C$ is independent of $h$, therefore we have $\nabla _{X_{1}}u_{0}\in
L^{p}(\Omega )$. Combining this with $u_{0}\in V_{p}$ we get the desired
result.
\end{proof}

\section{The Rate of convergence Theorem}

In this section we suppose that $\Omega =\omega _{1}\times \omega _{2}$
where $\omega _{1},\omega _{2}$ are two bounded Lipschitz domains of $%
\mathbb{R}
^{q}$ and $%
\mathbb{R}
^{N-q}$ respectively. We suppose that $A_{12}$, $A_{22}$ and $f$ depend on $%
X_{2}$ only i.e $A_{12}(x)=A_{12}(X_{2})$, $A_{22}(x)=A_{22}(X_{2})$ and $%
f(x)=f(X_{2})\in L^{p}(\omega _{2})$ ($1<p<2$), $f\notin L^{2}(\omega _{2})$.

Let $u_{\epsilon }$, $u_{0}$ be the unique entropy solutions of (\ref{2}), (%
\ref{5}) respectively then under the above assumptions we have the following

\begin{theorem}
For every $\omega _{1}^{\prime }\subset \subset \omega _{1}$ and $m\in 
\mathbb{N}
^{\ast }$ there exists $C\geq 0$ independent of $\ \epsilon $ such that 
\begin{equation*}
\left\Vert u_{\epsilon }-u_{0}\right\Vert _{W^{p}(\omega _{1}^{\prime
}\times \omega _{2})}\leq C\epsilon ^{m}
\end{equation*}
\end{theorem}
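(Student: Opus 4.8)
The plan is to exploit the fact that, under assumption (\ref{21}) with all the data depending on $X_{2}$ only, the limit solution $u_{0}$ is itself independent of $X_{1}$, so that $\nabla _{X_{1}}u_{0}=0$ and the whole $X_{2}$-forcing of (\ref{2}) is \emph{exactly} matched by $u_{0}$. Setting $w_{\epsilon }=u_{\epsilon }-u_{0}$ and subtracting the limit problem from (\ref{2}), the difference solves an equation whose only ``source'' is the $\epsilon $-coupling through the $X_{1}$-derivatives. As in the earlier subsections I would not argue with the entropy solutions directly but with the $L^{2}$ approximations $u_{\epsilon }^{n}$ (solving (\ref{7})) and $u_{0}^{n}$ (solving (\ref{14})), which are regular enough to be tested; all estimates are to be made uniform in $n$ and then passed to the limit using the strong convergences of Propositions 1--3 together with the invariance of the Lebesgue measure under translations, exactly as at the end of the proof of Theorem 4.

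The core is a localized Caccioppoli-type estimate. Fix a cut-off $\rho =\rho (X_{1})\in \mathcal{D}(\omega _{1})$ with $0\leq \rho \leq 1$ and $\rho \equiv 1$ on a slightly smaller cylinder, and test the difference of (\ref{7}) and (\ref{14}) with $\varphi =\rho ^{2}\theta _{\delta }(w_{\epsilon }^{n})$, where $\theta _{\delta }$ is the function of subsection 2.4 (note $\varphi \in H_{0}^{1}(\Omega )$, since $\rho $ kills the top and bottom faces while $w_{\epsilon }^{n}$ vanishes on the lateral boundary). Ellipticity applied to the two diagonal blocks $A_{11}$, $A_{22}$ produces the coercive quantity $\lambda \int \rho ^{2}\theta _{\delta }^{\prime }(w_{\epsilon }^{n})\big(\epsilon ^{2}|\nabla _{X_{1}}u_{\epsilon }^{n}|^{2}+|\nabla _{X_{2}}w_{\epsilon }^{n}|^{2}\big)$. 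The remaining terms are of two kinds: genuine coupling terms carrying a factor $\epsilon \nabla _{X_{1}}u_{\epsilon }^{n}$, which are absorbed by Young's inequality into the coercive part; and the delicate term $\epsilon \int \rho ^{2}A_{12}\nabla _{X_{2}}u_{0}^{n}\cdot \nabla _{X_{1}}\theta _{\delta }(w_{\epsilon }^{n})$, coming from the $\nabla _{X_{2}}u_{0}^{n}$ part of $\nabla _{X_{2}}u_{\epsilon }^{n}$. Here I would use the chain rule $\theta _{\delta }^{\prime }(w_{\epsilon }^{n})\nabla _{X_{1}}w_{\epsilon }^{n}=\nabla _{X_{1}}\theta _{\delta }(w_{\epsilon }^{n})$ together with the crucial observation that $A_{12}\nabla _{X_{2}}u_{0}^{n}$ depends on $X_{2}$ only, so that an integration by parts in the $X_{1}$-variables throws the derivative onto $\rho ^{2}$ and turns this term into $-2\epsilon \int \rho \,\nabla _{X_{1}}\rho \cdot (A_{12}\nabla _{X_{2}}u_{0}^{n})\,\theta _{\delta }(w_{\epsilon }^{n})$, which is supported in the transition layer $\{\nabla _{X_{1}}\rho \neq 0\}$ and carries an explicit factor $\epsilon $. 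Passing $\delta \rightarrow 0$ and applying H\"{o}lder's inequality exactly as in the derivation of (\ref{8}) converts this weighted bound into an $L^{p}$ one: schematically, the full first-order error of $w_{\epsilon }$ on the inner cylinder is bounded by $C\epsilon $ times the corresponding error (the $X_{2}$-energy plus $\|w_{\epsilon }\|_{L^{p}}$) on the outer cylinder.

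With this single-step gain in hand I would finish by iteration. Choose nested open sets $\omega _{1}^{\prime }=\omega _{1}^{(m)}\subset \subset \omega _{1}^{(m-1)}\subset \subset \cdots \subset \subset \omega _{1}^{(0)}\subset \subset \omega _{1}$ and a cut-off adapted to each pair $(\omega _{1}^{(j+1)},\omega _{1}^{(j)})$. Applying the localized estimate successively, the error on $\omega _{1}^{(j+1)}\times \omega _{2}$ is controlled by $C\epsilon $ times the error on $\omega _{1}^{(j)}\times \omega _{2}$; since the outermost error is bounded uniformly (by (\ref{9})--(\ref{10}) and Proposition 3), compounding the $m$ steps yields a bound by $C\epsilon ^{m}$ for each of $\|w_{\epsilon }^{n}\|_{L^{p}}$, $\|\nabla _{X_{2}}w_{\epsilon }^{n}\|_{L^{p}}$ and $\|\nabla _{X_{1}}u_{\epsilon }^{n}\|_{L^{p}}$ on $\omega _{1}^{\prime }\times \omega _{2}$, with $C$ independent of $\epsilon $ and $n$. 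Letting $n\rightarrow \infty $ and recombining these three bounds gives the stated estimate for $u_{\epsilon }-u_{0}$, regardless of the precise normalization of the norm $\|\cdot \|_{W^{p}}$.

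The main obstacle is the iteration step itself, namely proving that the localized estimate is genuinely \emph{homogeneous} in the error. The danger is that the coupling term built from $\nabla _{X_{2}}u_{0}^{n}$ leaves behind an $O(1)$ contribution, which would cap the rate at a single power of $\epsilon $; the integration by parts in $X_{1}$ is precisely what keeps that term proportional to $\theta _{\delta }(w_{\epsilon }^{n})$, hence to the error on the larger cylinder, so that the powers of $\epsilon $ compound across the $m$ steps. Making this rigorous requires tracking all constants uniformly in $\epsilon $, $n$ and $\delta $ through the H\"{o}lder and Poincar\'{e} steps (so that the passages $\delta \rightarrow 0$ and $n\rightarrow \infty $ are legitimate), and checking that the shrinking of the cylinders over the finitely many steps does not degrade the constant; this is the standard but bookkeeping-heavy part of such nested-domain arguments.
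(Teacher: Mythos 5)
Your overall strategy is the same as the paper's: approximate by $u_{\epsilon }^{n}$, $u_{0}^{n}$, subtract the two problems, test with $\rho ^{2}\theta _{\delta }(u_{\epsilon }^{n}-u_{0}^{n})$, convert the weighted estimate to $L^{p}$ by H\"{o}lder and Poincar\'{e}, and iterate over nested cylinders. But there is a genuine gap exactly at the point you single out as the main obstacle. After your integration by parts in $X_{1}$ you keep the layer term $-2\epsilon \int \rho \,\nabla _{X_{1}}\rho \cdot (A_{12}\nabla _{X_{2}}u_{0}^{n})\,\theta _{\delta }(w_{\epsilon }^{n})\,dx$ as a surviving source and assert that, being proportional to $\theta _{\delta }(w_{\epsilon }^{n})$, it still lets the powers of $\epsilon $ compound. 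It does not. Since $\left\vert \theta _{\delta }(t)\right\vert \leq \frac{2}{p-1}(\delta +\left\vert t\right\vert )^{p-1}$ and $\nabla _{X_{2}}u_{0}^{n}$ is only $O(1)$ in $L^{p}$, this term is of size $\epsilon \left\Vert w\right\Vert _{L^{p}(\text{outer})}^{p-1}$, i.e. of strictly lower homogeneity in the error than the coercive part. Running your own H\"{o}lder/Poincar\'{e} conversion, the one-step inequality then reads $a_{\text{in}}^{2}\leq C\epsilon ^{2}a_{\text{out}}^{2}+C\epsilon a_{\text{out}}$ with $a=\left\Vert u_{\epsilon }-u_{0}\right\Vert _{L^{p}}$, and iterating gives only $a_{j}\leq C\epsilon ^{1-2^{-j}}$: the rate saturates below $\epsilon $ and can never reach $\epsilon ^{m}$ for $m\geq 2$.

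The repair is available inside your own computation, but you must use it: that term is identically zero, not merely small. The product rule $\nabla _{X_{1}}(\rho ^{2}\theta _{\delta }(w))=\rho ^{2}\nabla _{X_{1}}\theta _{\delta }(w)+2\rho \,\theta _{\delta }(w)\nabla _{X_{1}}\rho $ pairs $A_{12}\nabla _{X_{2}}u_{0}^{n}$ with both pieces, and the pairing with the second piece is exactly the negative of your integrated-by-parts term, so the two cancel; equivalently, $\int A_{12}\nabla _{X_{2}}u_{0}^{n}\cdot \nabla _{X_{1}}\varphi \,dx=0$ for every $\varphi $ compactly supported in $X_{1}$, because $A_{12}\nabla _{X_{2}}u_{0}^{n}$ is independent of $X_{1}$. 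The paper encodes this once and for all at the level of the equations: under its assumptions $u_{0}^{n}$ solves the same equation $-\operatorname{div}(A_{\epsilon }\nabla u_{0}^{n})=f_{n}$ as $u_{\epsilon }^{n}$, so $w=u_{\epsilon }^{n}-u_{0}^{n}$ satisfies the homogeneous identity $\int A_{\epsilon }\nabla w\cdot \nabla \varphi \,dx=0$, and the only error terms after testing are $-2\epsilon ^{2}\int \rho \,\theta _{\delta }(w)A_{11}\nabla _{X_{1}}w\cdot \nabla _{X_{1}}\rho $ and $-2\epsilon \int \rho \,\theta _{\delta }(w)A_{12}\nabla _{X_{2}}w\cdot \nabla _{X_{1}}\rho $, both built from the error's own gradients; this homogeneity is what makes the single-step gain of $\epsilon $ compound under iteration. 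A second, smaller repair: coercivity must come from the ellipticity assumption (\ref{3}) on the full matrix applied to $A_{\epsilon }\nabla w\cdot \nabla w$ (substitute $\xi =(\epsilon \nabla _{X_{1}}w,\nabla _{X_{2}}w)$); your plan of invoking ellipticity of the diagonal blocks and absorbing the cross terms $\epsilon A_{12}\nabla _{X_{2}}w\cdot \nabla _{X_{1}}w$ and $\epsilon A_{21}\nabla _{X_{1}}w\cdot \nabla _{X_{2}}w$ by Young's inequality only works when $\left\Vert A_{12}\right\Vert _{\infty },\left\Vert A_{21}\right\Vert _{\infty }\leq \lambda $, which is not assumed.
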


\begin{proof}
Let $u_{\epsilon }$, $u_{0}$ be the entropy solutions of (\ref{2}), (\ref{5}%
) respectively, we use the approximated sequence $(u_{\epsilon
}^{n})_{\epsilon ,n}$, $(u_{0}^{n})_{n}$ introduced in section 2.
Subtracting (\ref{14}) from (\ref{7}) we obtain%
\begin{equation*}
\int_{\Omega }A_{\epsilon }\nabla (u_{\epsilon }^{n}-u_{0}^{n})\cdot \nabla
\varphi dx=0,
\end{equation*}

where we have used that $u_{0}^{n}$ is independent of $X_{1}$ (since $f$ and 
$A_{22}$ are independent of $X_{1}$) and that $A_{12}$ is independent of $%
X_{1}$.

Let $\omega _{1}^{\prime }\subset \subset \omega _{1}$ then there exists $%
\omega _{1}^{\prime }\subset \subset \omega _{1}^{\prime \prime }\subset
\subset \omega _{1}$. We introduce the function $\rho \in \mathcal{D}(\omega
_{1})$ such that $Supp(\rho )\subset \omega _{1}^{\prime \prime }$ and\ $%
\rho =1$ on $\omega _{1}^{\prime }$\textbf{( }we can choose $0\leq \rho \leq
1$) Testing with $\varphi =\rho ^{2}\theta _{\delta }(u_{\epsilon
}^{n}-u_{0}^{n})\in H_{0}^{1}(\Omega )$ (we can check easily that this
function belongs to $H_{0}^{1}(\Omega )$ using approximation argument) in
the above integral equality we get%
\begin{multline*}
\int_{\Omega }\rho ^{2}\theta _{\delta }^{\prime }(u_{\epsilon
}^{n}-u_{0}^{n})A_{\epsilon }\nabla (u_{\epsilon }^{n}-u_{0}^{n})\cdot
\nabla (u_{\epsilon }^{n}-u_{0}^{n})dx \\
=-\int_{\Omega }\rho \theta _{\delta }(u_{\epsilon
}^{n}-u_{0}^{n})A_{\epsilon }\nabla (u_{\epsilon }^{n}-u_{0}^{n})\cdot
\nabla \rho dx \\
=-\epsilon ^{2}\int_{\Omega }\rho \theta _{\delta }(u_{\epsilon
}^{n}-u_{0}^{n})A_{11}\nabla _{X_{1}}(u_{\epsilon }^{n}-u_{0}^{n})\cdot
\nabla _{X_{1}}\rho dx \\
-\epsilon \int_{\Omega }\rho \theta _{\delta }(u_{\epsilon
}^{n}-u_{0}^{n})A_{12}\nabla _{X_{2}}(u_{\epsilon }^{n}-u_{0}^{n})\cdot
\nabla _{X_{1}}\rho dx
\end{multline*}

where we have used that $\rho $ is independent of $X_{2}.$

Using the ellipticity assumption for the left hand side and assumption (\ref%
{4}) for the right hand side of previous equality we deduce%
\begin{multline*}
\epsilon ^{2}\lambda \int_{\Omega }\theta _{\delta }^{\prime }(u_{\epsilon
}^{n}-u_{0}^{n})\left\vert \rho \nabla _{X_{1}}(u_{\epsilon
}^{n}-u_{0}^{n})\right\vert ^{2}dx+\lambda \int_{\Omega }\theta _{\delta
}^{\prime }(u_{\epsilon }^{n}-u_{0}^{n})\left\vert \rho \nabla
_{X_{2}}(u_{\epsilon }^{n}-u_{0}^{n})\right\vert ^{2}dx \\
\leq \epsilon ^{2}C\int_{\Omega }\rho \left\vert \theta _{\delta
}(u_{\epsilon }^{n}-u_{0}^{n})\right\vert \left\vert \nabla
_{X_{1}}(u_{\epsilon }^{n}-u_{0}^{n})\right\vert dx \\
+\epsilon C\int_{\Omega }\rho \left\vert \theta _{\delta }(u_{\epsilon
}^{n}-u_{0}^{n})\right\vert \left\vert \nabla _{X_{2}}(u_{\epsilon
}^{n}-u_{0}^{n})\right\vert dx
\end{multline*}

Where $C\geq 0$ depends on $A$ and $\rho $. Using Young's inequality $ab\leq 
\dfrac{a^{2}}{2c}+c\dfrac{b^{2}}{2}$ for the two terms in the right hand
side of the previous inequality we obtain%
\begin{multline*}
\epsilon ^{2}\frac{\lambda }{2}\int_{\Omega }\theta _{\delta }^{\prime
}(u_{\epsilon }^{n}-u_{0}^{n})\left\vert \rho \nabla _{X_{1}}(u_{\epsilon
}^{n}-u_{0}^{n})\right\vert ^{2}dx+\frac{\lambda }{2}\int_{\Omega }\theta
_{\delta }^{\prime }(u_{\epsilon }^{n}-u_{0}^{n})\left\vert \rho \nabla
_{X_{2}}(u_{\epsilon }^{n}-u_{0}^{n})\right\vert ^{2}dx \\
\leq \epsilon ^{2}C^{\prime }\int_{\omega _{1}^{\prime \prime }\times \omega
_{2}}\left\vert \theta _{\delta }(u_{\epsilon }^{n}-u_{0}^{n})\right\vert
^{2}\theta _{\delta }^{\prime }(u_{\epsilon }^{n}-u_{0}^{n})^{-1}dx
\end{multline*}

Whence%
\begin{multline*}
\epsilon ^{2}\frac{\lambda }{2}\int_{\Omega }\theta _{\delta }^{\prime
}(u_{\epsilon }^{n}-u_{0}^{n})\left\vert \rho \nabla _{X_{1}}(u_{\epsilon
}^{n}-u_{0}^{n})\right\vert ^{2}dx+\frac{\lambda }{2}\int_{\Omega }\theta
_{\delta }^{\prime }(u_{\epsilon }^{n}-u_{0}^{n})\left\vert \rho \nabla
_{X_{2}}(u_{\epsilon }^{n}-u_{0}^{n})\right\vert ^{2}dx \\
\leq \frac{4}{(p-1)^{2}}\epsilon ^{2}C^{\prime }\int_{\omega _{1}^{\prime
\prime }\times \omega _{2}}(\delta +\left\vert u_{\epsilon
}^{n}-u_{0}^{n}\right\vert )^{p}dx
\end{multline*}

where $C^{\prime \prime }$ is independent of $\epsilon $ and $n$

Now, using H\"{o}lder's inequality and the previous inequality we deduce%
\begin{multline*}
\epsilon ^{2}\frac{\lambda }{2}\left\Vert \rho \nabla _{X_{1}}(u_{\epsilon
}^{n}-u_{0}^{n})\right\Vert _{L^{p}(\Omega )}^{2}+\frac{\lambda }{2}%
\left\Vert \rho \nabla _{X_{2}}(u_{\epsilon }^{n}-u_{0}^{n})\right\Vert
_{L^{p}(\Omega )}^{2} \\
\leq \left[ 
\begin{array}{c}
\epsilon ^{2}\frac{\lambda }{2}\left( \int_{\Omega }\theta _{\delta
}^{\prime }(u_{\epsilon }^{n}-u_{0}^{n})\left\vert \rho \nabla
_{X_{1}}(u_{\epsilon }^{n}-u_{0}^{n})\right\vert ^{2}dx\right) \\ 
+\frac{\lambda }{2}\left( \int_{\Omega }\theta _{\delta }^{\prime
}(u_{\epsilon }^{n}-u_{0}^{n})\left\vert \rho \nabla _{X_{2}}(u_{\epsilon
}^{n}-u_{0}^{n})\right\vert ^{2}dx\right)%
\end{array}%
\right] \times \\
\left( \int_{\omega _{1}^{\prime \prime }\times \omega _{2}}(\delta
+\left\vert u_{\epsilon }^{n}-u_{0}^{n}\right\vert )^{p}dx\right) ^{\frac{2-p%
}{p}} \\
\leq \frac{4C^{\prime }}{(p-1)^{2}}\epsilon ^{2}\left( \int_{\omega
_{1}^{\prime \prime }\times \omega _{2}}(\delta +\left\vert u_{\epsilon
}^{n}-u_{0}^{n}\right\vert )^{p}dx\right) ^{\frac{2}{p}}
\end{multline*}

Passing to the limit as $\delta \rightarrow 0$ using the Lebesgue theorem.
Passing to the limit as $n\rightarrow \infty $ we get%
\begin{eqnarray}
&&\epsilon ^{2}\left\Vert \nabla _{X_{1}}(u_{\epsilon }-u_{0})\right\Vert
_{L^{p}(\omega _{1}^{\prime }\times \omega _{2})}^{2}+\left\Vert \nabla
_{X_{2}}(u_{\epsilon }-u_{0})\right\Vert _{L^{p}(\omega _{1}^{\prime }\times
\omega _{2})}^{2}  \label{48} \\
&\leq &C^{\prime \prime }\epsilon ^{2}\left\Vert (u_{\epsilon
}-u_{0})\right\Vert _{L^{p}(\omega _{1}^{\prime \prime }\times \omega
_{2})}^{2}  \notag
\end{eqnarray}

Using Poincar\'{e}'s inequality 
\begin{equation*}
\left\Vert (u_{\epsilon }-u_{0})\right\Vert _{L^{p}(\omega _{1}^{\prime
\prime }\times \omega _{2})}\leq C_{\omega _{2}}\left\Vert \nabla
_{X_{2}}(u_{\epsilon }-u_{0})\right\Vert _{L^{p}(\omega _{1}^{\prime \prime
}\times \omega _{2})}\text{,}
\end{equation*}

we obtain%
\begin{multline*}
\epsilon ^{2}\left\Vert \nabla _{X_{1}}(u_{\epsilon }-u_{0})\right\Vert
_{L^{p}(\omega _{1}^{\prime }\times \omega _{2})}^{2}+\left\Vert \nabla
_{X_{2}}(u_{\epsilon }-u_{0})\right\Vert _{L^{p}(\omega _{1}^{\prime }\times
\omega _{2})}^{2} \\
\leq C^{\prime \prime }\epsilon ^{2}\left\Vert \nabla _{X_{2}}(u_{\epsilon
}-u_{0})\right\Vert _{L^{p}(\omega _{1}^{\prime \prime }\times \omega
_{2})}^{2}
\end{multline*}

Let $m\in 
\mathbb{N}
^{\ast }$ then there exists $\omega _{1}^{\prime }\subset \subset \omega
_{1}^{\prime \prime }\subset \subset ...\omega _{1}^{(m+1)}\subset \subset
\omega _{1}$. Iterating the above inequality $m-$time we deduce%
\begin{multline*}
\epsilon ^{2}\left\Vert \nabla _{X_{1}}(u_{\epsilon }-u_{0})\right\Vert
_{L^{p}(\omega _{1}^{\prime }\times \omega _{2})}^{2}+\left\Vert \nabla
_{X_{2}}(u_{\epsilon }-u_{0})\right\Vert _{L^{p}(\omega _{1}^{\prime }\times
\omega _{2})}^{2} \\
\leq C_{m}\epsilon ^{2m}\left\Vert \nabla _{X_{2}}(u_{\epsilon
}-u_{0})\right\Vert _{L^{p}(\omega _{1}^{(m)}\times \omega _{2})}^{2}
\end{multline*}

Now, from (\ref{48}) (with $\omega _{1}^{\prime }$ and $\omega _{1}^{\prime
\prime }$ replaced by $\omega _{1}^{(m)}$ and $\omega _{1}^{(m+1)}$
respectively) we deduce%
\begin{multline*}
\epsilon ^{2}\left\Vert \nabla _{X_{1}}(u_{\epsilon }-u_{0})\right\Vert
_{L^{p}(\omega _{1}^{\prime }\times \omega _{2})}^{2}+\left\Vert \nabla
_{X_{2}}(u_{\epsilon }-u_{0})\right\Vert _{L^{p}(\omega _{1}^{\prime }\times
\omega _{2})}^{2} \\
\leq C_{m}^{\prime }\epsilon ^{2(m+1)}\left\Vert u_{\epsilon
}-u_{0}\right\Vert _{L^{p}(\omega _{1}^{(m+1)}\times \omega _{2})}^{2}
\end{multline*}

Since $u_{\epsilon }\rightarrow u_{0}$ in $L^{p}(\Omega )$ then $\left\Vert
u_{\epsilon }-u_{0}\right\Vert _{L^{p}(\Omega )}$ is bounded and therefore
we obtain 
\begin{equation*}
\left\Vert u_{\epsilon }-u_{0}\right\Vert _{W^{p}(\omega _{1}^{\prime
}\times \omega _{2})}\leq C_{m}^{\prime \prime }\epsilon ^{m}
\end{equation*}

And the proof of the theorem is finished.
\end{proof}

Can one obtain a more better convergence rate? In fact, the anisotropic
singular perturbation problem (\ref{2}) can be seen as a problem in a
cylinder becoming unbounded. Indeed the two problems can be connected to
each other via a scaling \ $\epsilon =\frac{1}{\ell }$ (see \cite{13} for
more details). So let us consider the problem%
\begin{equation}
\left\{ 
\begin{array}{cc}
-\func{div}(\tilde{A}\nabla u_{\ell })=f\text{ \ \ \ \ \ \ \ \ \ \ } &  \\ 
u_{\ell }=0\text{ \ \ on }\partial \Omega _{\ell }\text{\ \ \ \ \ \ \ \ \ \
\ \ \ \ \ \ } & \text{ }%
\end{array}%
\right.  \label{22}
\end{equation}

where $\tilde{A}=(\tilde{a}_{ij})$ is a $N\times N$ matrix such that%
\begin{equation}
\tilde{a}_{ij}\in L^{\infty }(%
\mathbb{R}
^{q}\times \omega _{2})  \label{23}
\end{equation}
\begin{equation}
\exists \lambda >0:\tilde{A}\xi \cdot \xi \geq \lambda \left\vert \xi
\right\vert ^{2}\text{ }\forall \xi \in 
\mathbb{R}
^{N}\text{ for a.e }x\in 
\mathbb{R}
^{q}\times \omega _{2},  \label{24}
\end{equation}

$\Omega _{\ell }=\ell \omega _{1}\times \omega _{2}$ a bounded domain where $%
\omega _{1},$ $\omega _{2}$ are two bounded Lipschitz domain with $\omega
_{1}$ convex and containing $0.$

We assume that $f\in L^{p}(\omega _{2})$ ($1<p<2$) and $\tilde{A}_{22}(x)=%
\tilde{A}_{22}(X_{2})$, $\tilde{A}_{12}(x)=\tilde{A}_{12}(X_{2})$.

We consider the limit problem 
\begin{equation}
\left\{ 
\begin{array}{cc}
-\func{div}(\tilde{A}_{22}\nabla _{X_{2}}u_{\infty })=f\text{ \ \ \ \ \ \ \
\ \ \ } &  \\ 
u_{\infty }=0\text{ \ \ on }\partial \omega _{2}\text{\ \ \ \ \ \ \ \ \ \ \
\ \ \ \ \ } & \text{ }%
\end{array}%
\right.  \label{25}
\end{equation}

Then under the above assumptions we have

\begin{theorem}
Let $u_{\ell }$, $u_{\infty }$ be the unique entropy solutions to (\ref{22})
and (\ref{25}) then for every $\alpha \in (0,1)$ there exists $C\geq 0,c>0$
independent of $\ell $ such that 
\begin{equation*}
\left\Vert \nabla (u_{\ell }-u_{\infty })\right\Vert _{W^{1,p}(\Omega
_{\alpha \ell })}\leq Ce^{-c\ell }
\end{equation*}
\end{theorem}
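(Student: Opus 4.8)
The plan is to reduce to the homogeneous difference, run a single Caccioppoli-type energy estimate over one ``dilation shell'', and then iterate it a number of times proportional to $\ell$, so that a fixed contraction factor per shell compounds into the exponential bound $e^{-c\ell}$. Throughout write $\Omega_s=s\omega_1\times\omega_2$ for $0<s\le\ell$ and set $w=u_\ell-u_\infty$, where $u_\infty$ is extended to $\Omega_\ell$ independently of $X_1$. Since $\tilde A_{12}$, $\tilde A_{22}$ and $f$ depend on $X_2$ only and $\nabla_{X_1}u_\infty=0$, the function $u_\infty$ solves the same equation as $u_\ell$ in the interior of $\Omega_\ell$; testing the two weak formulations against any $\varphi\in H_0^1(\Omega_\ell)$ and using that $\int_{\ell\omega_1}\nabla_{X_1}\varphi\,dX_1=0$ shows that the $L^2$-approximations $w^n=u_\ell^n-u_\infty^n$ (built from $f_n\to f$ in $L^p$ as in Proposition 3 and the $L^2$-theory) satisfy the homogeneous identity $\int_{\Omega_\ell}\tilde A\nabla w^n\cdot\nabla\varphi\,dx=0$. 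Note that $w^n$ need not lie in $H_0^1(\Omega_\ell)$ (it does not vanish on the lateral boundary), but any test function of the form $\rho^2\theta_\delta(w^n)$ with $\rho=\rho(X_1)$ compactly supported in $X_1$ does, so the estimate below is legitimate.

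First I would carry out the one-shell estimate. Fix $0<s<t\le\ell$ and a cut-off $\rho\in\mathcal{D}(t\omega_1)$ with $0\le\rho\le1$ and $\rho\equiv1$ on $s\omega_1$. Because $\omega_1$ is convex and contains $0$, the dilated boundaries $\partial(s\omega_1)$ and $\partial(t\omega_1)$ are separated by a distance at least $(t-s)\,r_0$ with $r_0=\mathrm{dist}(0,\partial\omega_1)>0$, so $\rho$ may be chosen with $|\nabla_{X_1}\rho|\le c_0/((t-s)r_0)$, the constant $c_0$ being independent of $s$, $t$ and $\ell$. Testing the homogeneous identity with $\rho^2\theta_\delta(w^n)\in H_0^1(\Omega_\ell)$ and proceeding exactly as in the proof of Theorem 7 — ellipticity on the left, the bound $|\theta_\delta(\tau)|^2\theta_\delta'(\tau)^{-1}\le C(\delta+|\tau|)^p$ together with Young's inequality on the right, then H\"older to pass from the weighted $L^2$ quantity to $\|\nabla w^n\|_{L^p}^2$ — yields, after letting $\delta\to0$ (Lebesgue) and $n\to\infty$,
\begin{equation*}
\|\nabla w\|_{L^p(\Omega_s)}^2\le\frac{C}{(t-s)^2}\,\|w\|_{L^p(\Omega_t)}^2 .
\end{equation*}
Since $\omega_2$ is fixed, Poincar\'e's inequality in the $X_2$ variable holds with a constant $C_{\omega_2}$ independent of $\ell$, giving $\|w\|_{L^p(\Omega_t)}\le C_{\omega_2}\|\nabla_{X_2}w\|_{L^p(\Omega_t)}\le C_{\omega_2}\|\nabla w\|_{L^p(\Omega_t)}$. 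Writing $\Phi(s)=\|\nabla w\|_{L^p(\Omega_s)}^2$, this reads
\begin{equation*}
\Phi(s)\le\frac{K}{(t-s)^2}\,\Phi(t),\qquad K=C\,C_{\omega_2}^2,
\end{equation*}
with $K$ independent of $s$, $t$ and $\ell$.

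Next I would iterate. Fix $h>0$ once and for all so large that $q:=K/(h\,r_0)^2<1$, and set $s_k=\alpha\ell+kh$ for $k=0,\dots,m$ with $m=\lfloor(1-\alpha)\ell/h\rfloor$, so that $\alpha\ell=s_0<s_1<\dots<s_m\le\ell$ and $t-s=h$ at every step. Applying the shell estimate on each pair $(s_k,s_{k+1})$ gives $\Phi(s_k)\le q\,\Phi(s_{k+1})$, and since $\Phi$ is nondecreasing in $s$,
\begin{equation*}
\Phi(\alpha\ell)\le q^{\,m}\,\Phi(s_m)\le q^{\,m}\,\Phi(\ell)=e^{-c\ell}\,\Phi(\ell),\qquad c=\tfrac{1-\alpha}{h}\log\tfrac1q>0 .
\end{equation*}
It remains to bound $\Phi(\ell)=\|\nabla w\|_{L^p(\Omega_\ell)}^2$ polynomially in $\ell$: this follows from the a priori energy estimate of Section 2 applied on $\Omega_\ell$ (testing with $\theta(u_\ell^n)$ and using the nondegenerate ellipticity of $\tilde A$), where the only $\ell$-dependence enters through $|\Omega_\ell|\sim\ell^q$ and $\|f\|_{L^p(\Omega_\ell)}\sim\ell^{q/p}$, both beaten by $e^{-c\ell}$. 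Combining and using Poincar\'e once more to control $\|w\|_{L^p(\Omega_{\alpha\ell})}$ gives $\|w\|_{W^{1,p}(\Omega_{\alpha\ell})}\le C\,e^{-c'\ell}$.

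The main obstacle is not any single estimate but keeping every constant ($c_0$, $C$, $K$, $C_{\omega_2}$) uniform in both the shell position $s_k$ and the length $\ell$; this is exactly what convexity of $\omega_1$ (uniform geometry of the dilation shells) and the fixedness of $\omega_2$ (an $\ell$-independent Poincar\'e constant) provide, and it is what upgrades the polynomial rate of Theorem 7 to an exponential one once the number of shells is allowed to grow linearly in $\ell$. A secondary technical point is the careful double passage to the limit ($\delta\to0$, then $n\to\infty$) transferring the estimate from the regularized $H_0^1$ problems to the entropy solutions. Finally, to recover the full norm $\|\nabla(u_\ell-u_\infty)\|_{W^{1,p}(\Omega_{\alpha\ell})}$ as stated, I would run the same shell iteration on the interior difference-quotient estimate used in Theorem 5 (legitimate here since $\tilde A_{22}$, $\tilde A_{12}$ and $f$ are independent of $X_1$), which transports the exponential decay to one further derivative.
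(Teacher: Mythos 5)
Your main argument is correct and follows essentially the same skeleton as the paper's proof: subtract the regularized problems to get the homogeneous identity $\int_{\Omega_\ell}\tilde A\nabla w^n\cdot\nabla\varphi\,dx=0$ (valid because $\tilde A_{12}$, $\tilde A_{22}$, $f_n$ and $u_\infty^n$ are independent of $X_1$, the $\tilde A_{21}$ and $\tilde A_{11}$ terms dying with $\nabla_{X_1}u_\infty^n=0$), test with $\rho^2\theta_\delta(w^n)$ for a cut-off $\rho=\rho(X_1)$, use the $\ell$-independent Poincar\'e inequality in the $X_2$ variable, iterate along dilated cylinders, and close with the polynomial bounds $\Vert\nabla u_\ell^n\Vert_{L^p(\Omega_\ell)}\le C\ell^{q/2}$, $\Vert\nabla u_\infty^n\Vert_{L^p(\Omega_\ell)}\le C'\ell^{q/2}$ before letting $\delta\to0$ and $n\to\infty$. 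The one genuine divergence is the contraction mechanism. The paper (following Chipot--Yeressian) keeps the Caccioppoli right-hand side localized on the unit shell $\Omega_{\ell_0+1}\setminus\Omega_{\ell_0}$ where $\nabla\rho\neq0$, obtains $\Vert\nabla w^n\Vert^p_{L^p(\Omega_{\ell_0})}\le C_3\Vert\nabla w^n\Vert^p_{L^p(\Omega_{\ell_0+1}\setminus\Omega_{\ell_0})}$, and then fills the hole: adding $C_3\Vert\nabla w^n\Vert^p_{L^p(\Omega_{\ell_0})}$ to both sides yields the contraction factor $C_3/(C_3+1)<1$ per unit step, no matter how large $C_3$ is. You instead discard the shell localization (bounding the shell integral by the integral over all of $\Omega_t$) and manufacture smallness from the $1/(t-s)^2$ in the Caccioppoli constant by taking one large but fixed step $h$ with $K/(hr_0)^2<1$. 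Both devices are legitimate and give the same $e^{-c\ell}$; the paper's needs no tuning of the step, yours needs no hole-filling observation. Your geometric justification (convexity of $\omega_1$ and $B(0,r_0)\subset\omega_1$ give $s\omega_1+(t-s)B(0,r_0)\subset t\omega_1$, hence the cut-off gradient bound) is exactly the fact the paper invokes for its unit bump function.

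Your final paragraph, however, should be dropped. The norm $\Vert\nabla(u_\ell-u_\infty)\Vert_{W^{1,p}(\Omega_{\alpha\ell})}$ in the statement is a typo (compare the ``$W^{p}$'' appearing in the statement of the polynomial-rate theorem): the paper's own proof establishes exactly what your main argument establishes, namely $\Vert\nabla(u_\ell-u_\infty)\Vert_{L^p(\Omega_{\alpha\ell})}\le Ce^{-c\ell}$, hence $\Vert u_\ell-u_\infty\Vert_{W^{1,p}(\Omega_{\alpha\ell})}\le Ce^{-c\ell}$ by Poincar\'e. Read literally, the statement would require interior second-derivative bounds, and the difference-quotient scheme you propose cannot deliver them: translations are available only in the $X_1$ directions, so at best you control $\nabla_{X_1}\nabla(u_\ell-u_\infty)$ and never $\nabla^2_{X_2}(u_\ell-u_\infty)$; indeed, with $\tilde A$ merely bounded measurable in $X_2$, $u_\ell$ need not have any second derivatives in $X_2$ even locally. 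So the theorem your main argument proves is the one the paper actually proves, and the attempted upgrade is both unnecessary and unattainable under the stated hypotheses.
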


\begin{proof}
Let $u_{\ell }$, $u_{\infty }$ the unique entropy solutions to (\ref{22})
and (\ref{25}) respectively, and let $(u_{\ell }^{n})$ and $(u_{\infty
}^{n}) $ the approximation sequences (as in section 2). we have $u_{\ell
}^{n}\rightarrow u_{\ell }$ in $W_{0}^{1,p}(\Omega _{\ell })$ and $u_{\infty
}^{n}\rightarrow u_{\infty }$ in $W_{0}^{1,p}(\omega _{2}).$Subtracting the
associated approximated problems to (\ref{22}) and (\ref{25}) and take the
weak formulation we get 
\begin{equation}
\int_{\Omega _{\ell }}\tilde{A}\nabla (u_{\ell }^{n}-u_{\infty }^{n})\nabla
\varphi dx=0\text{, }\varphi \in \mathcal{D}(\Omega )  \label{26}
\end{equation}%
Where we have used that $\tilde{A}_{22}$, $\tilde{A}_{12}$, $u_{\infty }^{n}$
are independent of $X_{1}$. Now we will use the iteration technique
introduced in \cite{14}, let $0<\ell _{0}\leq \ell -1$, and let $\rho \in 
\mathcal{D}(%
\mathbb{R}
^{q})$ a bump function such that%
\begin{equation*}
0\leq \rho \leq 1\text{, }\rho =1\text{ on }\ell _{0}\omega _{1}\text{ and }%
\rho =0\text{ on }%
\mathbb{R}
^{q}\diagdown (\ell _{0}+1)\omega _{1}\text{, }\left\vert \nabla
_{X_{1}}\rho \right\vert \leq c_{0}
\end{equation*}%
where $c_{0}$ is the universal constant (see \cite{13}). Testing with $\rho
^{2}\theta _{\delta }(u_{\ell }^{n}-u_{\infty }^{n})\in H_{0}^{1}(\Omega
_{\ell })$ in (\ref{26}) we get%
\begin{multline*}
\int_{\Omega _{\ell }}\rho ^{2}\theta _{\delta }^{\prime }(u_{\ell
}^{n}-u_{\infty }^{n})\tilde{A}\nabla (u_{\ell }^{n}-u_{\infty }^{n})\cdot
\nabla (u_{\ell }^{n}-u_{\infty }^{n})dx \\
+\int_{\Omega _{\ell }}\rho \theta _{\delta }(u_{\ell }^{n}-u_{\infty }^{n})%
\tilde{A}\nabla (u_{\ell }^{n}-u_{\infty }^{n})\cdot \nabla \rho dx=0
\end{multline*}%
Using the ellipticity assumption (\ref{24}) 
\begin{multline*}
\int_{\Omega _{\ell }}\rho ^{2}\theta _{\delta }^{\prime }(u_{\ell
}^{n}-u_{\infty }^{n})\left\vert \nabla (u_{\ell }^{n}-u_{\infty
}^{n})\right\vert ^{2}dx \\
\leq 2\int_{\Omega _{\ell }}\rho \left\vert \theta _{\delta }(u_{\ell
}^{n}-u_{\infty }^{n})\right\vert \left\vert \tilde{A}\nabla (u_{\ell
}^{n}-u_{\infty }^{n})\right\vert \left\vert \nabla \rho \right\vert dx
\end{multline*}%
Notice that $\nabla \rho =0$ on $\Omega _{\ell _{0}}$, and $\Omega _{\ell
_{0}}\subset \Omega _{\ell _{0}+1}$ ( since $\omega _{1}$ is convex and
containing $0$). Then by the Cauchy-Schwaz inequality we get%
\begin{multline*}
\int_{\Omega _{\ell }}\rho ^{2}\theta _{\delta }^{\prime }(u_{\ell
}^{n}-u_{\infty }^{n})\left\vert \nabla (u_{\ell }^{n}-u_{\infty
}^{n})\right\vert ^{2}dx \\
\leq 2c_{0}C\int_{\Omega _{\ell _{0}+1}\diagdown \Omega _{\ell _{0}}}\rho
\left\vert \theta _{\delta }(u_{\ell }^{n}-u_{\infty }^{n})\right\vert
\left\vert \nabla (u_{\ell }^{n}-u_{\infty }^{n})\right\vert dx \\
\leq 2c_{0}C\left( \int_{\Omega _{\ell }}\rho ^{2}\theta _{\delta }^{\prime
}(u_{\ell }^{n}-u_{\infty }^{n})\left\vert \nabla (u_{\ell }^{n}-u_{\infty
}^{n})\right\vert ^{2}dx\right) ^{\frac{1}{2}}\times \\
\left( \int_{\Omega _{\ell _{0}+1}\diagdown \Omega _{\ell _{0}}}\left\vert
\theta _{\delta }(u_{\ell }^{n}-u_{\infty }^{n})\right\vert ^{2}\theta
_{\delta }^{\prime }(u_{\ell }^{n}-u_{\infty }^{n})^{-1}dx\right) ^{\frac{1}{%
2}}
\end{multline*}%
where we have used (\ref{23}). Whence we get ( since $\rho =1$ on $\Omega
_{\ell _{0}}$) 
\begin{eqnarray*}
\int_{\Omega _{\ell _{0}}}\theta _{\delta }^{\prime }(u_{\ell
}^{n}-u_{\infty }^{n})\left\vert \nabla (u_{\ell }^{n}-u_{\infty
}^{n})\right\vert ^{2}dx &\leq &\int_{\Omega _{\ell }}\rho ^{2}\theta
_{\delta }^{\prime }(u_{\ell }^{n}-u_{\infty }^{n})\left\vert \nabla
(u_{\ell }^{n}-u_{\infty }^{n})\right\vert ^{2}dx \\
&\leq &\left( \frac{4c_{0}C}{p-1}\right) ^{2}\int_{\Omega _{\ell
_{0}+1}\diagdown \Omega _{\ell _{0}}}(\delta +\left\vert u_{\ell
}^{n}-u_{\infty }^{n}\right\vert )^{p}dx
\end{eqnarray*}

From H\"{o}lder's inequality it holds that%
\begin{multline*}
\left\Vert \nabla (u_{\ell }^{n}-u_{\infty }^{n})\right\Vert _{L^{p}(\Omega
_{\ell _{0}})}^{2} \\
\leq \left( \int_{\Omega _{\ell _{0}}}\theta _{\delta }^{\prime }(u_{\ell
}^{n}-u_{\infty }^{n})\left\vert \nabla (u_{\ell }^{n}-u_{\infty
}^{n})\right\vert ^{2}dx\right) \left( \int_{\Omega _{\ell _{0}}}(\delta
+\left\vert u_{\ell }^{n}-u_{\infty }^{n}\right\vert )^{p}dx\right) ^{\frac{%
2-p}{p}} \\
\leq \left( \frac{4c_{0}C}{p-1}\right) ^{2}\left( \int_{\Omega _{\ell
_{0}+1}\diagdown \Omega _{\ell _{0}}}(\delta +\left\vert u_{\ell
}^{n}-u_{\infty }^{n}\right\vert )^{p}dx\right) \left( \int_{\Omega _{\ell
_{0}}}(\delta +\left\vert u_{\ell }^{n}-u_{\infty }^{n}\right\vert
)^{p}dx\right) ^{\frac{2-p}{p}}
\end{multline*}

Passing to the limit as $\delta \rightarrow 0$ (using the Lebesgue theorem)
we get%
\begin{multline*}
\left\Vert \nabla (u_{\ell }^{n}-u_{\infty }^{n})\right\Vert _{L^{p}(\Omega
_{\ell _{0}})}^{2} \\
\leq C_{1}\left( \int_{\Omega _{\ell _{0}+1}\diagdown \Omega _{\ell
_{0}}}\left\vert u_{\ell }^{n}-u_{\infty }^{n}\right\vert ^{p}dx\right)
\times \left( \int_{\Omega _{\ell _{0}}}\left\vert u_{\ell }^{n}-u_{\infty
}^{n}\right\vert ^{p}dx\right) ^{\frac{2-p}{p}}\text{,}
\end{multline*}

where we have used $0\leq \rho \leq 1$. Using Poincar\'{e}'s inequality 
\begin{equation*}
\left\Vert \nabla (u_{\ell }^{n}-u_{\infty }^{n})\right\Vert _{L^{p}(\Omega
_{\ell _{0}})}\leq C_{\omega _{2}}\left\Vert \nabla (u_{\ell }^{n}-u_{\infty
}^{n})\right\Vert _{L^{p}(\Omega _{\ell _{0}})}
\end{equation*}%
we get 
\begin{equation*}
\left\Vert \nabla (u_{\ell }^{n}-u_{\infty }^{n})\right\Vert _{L^{p}(\Omega
_{\ell _{0}})}^{p}\leq C_{2}\left\Vert u_{\ell }^{n}-u_{\infty
}^{n}\right\Vert _{L^{p}(\Omega _{\ell _{0}+1}\diagdown \Omega _{\ell
_{0}})}^{p}
\end{equation*}

Using Poincar\'{e}'s inequality 
\begin{equation*}
\left\Vert u_{\ell }^{n}-u_{\infty }^{n}\right\Vert _{L^{p}(\Omega _{\ell
_{0}+1}\diagdown \Omega _{\ell _{0}})}\leq C_{\omega _{2}}\left\Vert \nabla
(u_{\ell }^{n}-u_{\infty }^{n})\right\Vert _{L^{p}(\Omega _{\ell
_{0}+1}\diagdown \Omega _{\ell _{0}})}
\end{equation*}

we get 
\begin{equation*}
\left\Vert \nabla (u_{\ell }^{n}-u_{\infty }^{n})\right\Vert _{L^{p}(\Omega
_{\ell _{0}})}^{p}\leq C_{3}\left\Vert \nabla (u_{\ell }^{n}-u_{\infty
}^{n})\right\Vert _{L^{p}(\Omega _{\ell _{0}+1}\diagdown \Omega _{\ell
_{0}})}^{p}
\end{equation*}

Whence 
\begin{equation*}
\left\Vert \nabla (u_{\ell }^{n}-u_{\infty }^{n})\right\Vert _{L^{p}(\Omega
_{\ell _{0}})}^{p}\leq \frac{C_{3}}{C_{3}+1}\left\Vert \nabla (u_{\ell
}^{n}-u_{\infty }^{n})\right\Vert _{L^{p}(\Omega _{\ell _{0}+1})}^{p}
\end{equation*}

Let $\alpha \in (0,1)$, iterating this formula starting from $\alpha \ell $
we get 
\begin{equation*}
\left\Vert \nabla (u_{\ell }^{n}-u_{\infty }^{n})\right\Vert _{L^{p}(\Omega
_{\alpha \ell })}^{p}\leq \left( \frac{C_{3}}{C_{3}+1}\right) ^{\left[
\alpha \ell \right] }\left\Vert \nabla (u_{\ell }^{n}-u_{\infty
}^{n})\right\Vert _{L^{p}(\Omega _{\alpha \ell +\left[ (1-\alpha )\ell %
\right] })}^{p}
\end{equation*}

Whence 
\begin{equation}
\left\Vert \nabla (u_{\ell }^{n}-u_{\infty }^{n})\right\Vert _{L^{p}(\Omega
_{\alpha \ell })}\leq ce^{-c^{\prime }\ell }\left\Vert \nabla (u_{\ell
}^{n}-u_{\infty }^{n})\right\Vert _{L^{p}(\Omega _{\ell })}  \label{27}
\end{equation}

where $c,c^{\prime }>0$ are independent of $\ell $ and $n.$

Now we have to estimate the right hand side of (\ref{27}). Testing with $%
\theta (u_{\ell }^{n})$ in the approximated problem associated to (\ref{22})
one can obtain as in subsection 2.1 
\begin{equation}
\left\Vert \nabla u_{\ell }^{n}\right\Vert _{L^{p}(\Omega _{\ell })}\leq
C\ell ^{\frac{q}{2}}  \label{28}
\end{equation}

Similarly testing with $\theta (u_{\infty }^{n})$ in the approximated
problem associated to (\ref{25}). we get 
\begin{equation}
\left\Vert \nabla u_{\infty }^{n}\right\Vert _{L^{p}(\Omega _{\ell })}\leq
C^{\prime }\ell ^{\frac{q}{2}}  \label{29}
\end{equation}

Replace (\ref{29}), (\ref{28}) in (\ref{27}) and passing to the limit as $%
n\rightarrow \infty $ we obtain the desired result.
\end{proof}

\begin{corollary}
Under the above assumptions then for every $\alpha \in (0,1)$ there exists $%
C\geq 0$, $c>0$ independent of $\ \epsilon $ such that%
\begin{equation*}
\left\Vert u_{\epsilon }-u_{0}\right\Vert _{W^{1,p}(\alpha \omega _{1}\times
\omega _{2})}\leq Ce^{-\dfrac{c}{\epsilon }}
\end{equation*}

where $u_{\epsilon }$, $u_{0}$ are the entropy solutions to (\ref{2}) and (%
\ref{5}) respectively
\end{corollary}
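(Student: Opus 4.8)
The plan is to derive Corollary 2 directly from Theorem 6 by undoing the scaling $\epsilon = \frac{1}{\ell}$ that connects the singular perturbation problem (\ref{2}) on the fixed domain $\Omega = \omega_1 \times \omega_2$ to the elongating cylinder problem (\ref{22}) on $\Omega_\ell = \ell\omega_1 \times \omega_2$. First I would make the change of variables explicit: setting $\ell = \frac{1}{\epsilon}$ and introducing the rescaled coordinate $Y_1 = \frac{X_1}{\epsilon} = \ell X_1$ in the $\omega_1$-directions (leaving $X_2$ untouched), a function $u_\epsilon(X_1, X_2)$ on $\omega_1 \times \omega_2$ corresponds to a function $\tilde{u}_\ell(Y_1, X_2) = u_\epsilon(\epsilon Y_1, X_2)$ on $\Omega_\ell$. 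Under this substitution $\nabla_{X_1} = \ell \nabla_{Y_1} = \frac{1}{\epsilon}\nabla_{Y_1}$, so the operator $-\operatorname{div}(A_\epsilon \nabla \cdot)$ built from the blocks $\epsilon^2 A_{11}, \epsilon A_{12}, A_{22}$ transforms precisely into $-\operatorname{div}(\tilde{A}\nabla\cdot)$ with $\tilde{A}$ assembled from the same blocks but with the scaling absorbed; the key point is that the $\epsilon$-powers in $A_\epsilon$ are exactly what is needed to make the rescaled matrix satisfy the uniform ellipticity (\ref{24}) and boundedness (\ref{23}) independently of $\ell$. I would verify that the hypotheses on $A_{12}, A_{22}, f$ depending only on $X_2$ carry over verbatim, so that the entropy solution $u_\epsilon$ of (\ref{2}) pulls back to the entropy solution $u_\ell$ of (\ref{22}) and the limit $u_0$ (which is $X_1$-independent) corresponds to $u_\infty$.

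Once the dictionary between the two problems is fixed, I would apply Theorem 6, which gives
\begin{equation*}
\left\Vert \nabla(u_\ell - u_\infty)\right\Vert_{L^p(\Omega_{\alpha\ell})} \leq C e^{-c\ell}
\end{equation*}
and then translate this estimate back to the $\epsilon$-variables. The subdomain $\Omega_{\alpha\ell} = \alpha\ell\,\omega_1 \times \omega_2$ corresponds under $X_1 = \epsilon Y_1$ to $\alpha\omega_1 \times \omega_2$, which is exactly the domain appearing in the corollary. Substituting $\ell = \frac{1}{\epsilon}$ into the exponential bound turns $e^{-c\ell}$ into $e^{-c/\epsilon}$, producing the claimed decay rate. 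The main bookkeeping step will be tracking how the $L^p$-norms of the gradient transform under the anisotropic scaling: the change of variables introduces Jacobian factors $\ell^q$ (or $\ell^{q/p}$ at the level of norms) from the $q$-dimensional $Y_1$-integration, and the differentiation $\nabla_{Y_1} = \epsilon\nabla_{X_1}$ brings further $\epsilon$-powers. I expect these to combine into at most polynomial factors in $\ell = \frac{1}{\epsilon}$, which are harmlessly absorbed into the exponential $e^{-c\ell}$ by slightly decreasing the constant $c$ (any $\ell^k$ is dominated by $e^{\eta\ell}$ for small $\eta$), so the final rate remains of the form $Ce^{-c/\epsilon}$.

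The step I expect to be the genuine obstacle is confirming that the norm equivalence is the right one to land in $W^{1,p}(\alpha\omega_1\times\omega_2)$ rather than merely controlling a rescaled seminorm. Theorem 6 controls $\left\Vert\nabla(u_\ell - u_\infty)\right\Vert_{L^p}$, and I would need to recover control of both $u_\epsilon - u_0$ in $L^p$ and its full gradient in $L^p$ on $\alpha\omega_1\times\omega_2$. For the $X_2$-gradient and the function itself this is immediate via Poincaré's inequality in the $\omega_2$-slices (exactly as used throughout Section 3), since $u_\infty$ vanishes on $\partial\omega_2$; for the $X_1$-gradient the scaling relation $\nabla_{X_1}(u_\epsilon - u_0) = \frac{1}{\epsilon}\nabla_{Y_1}(u_\ell - u_\infty)$ means the $\epsilon^{-1}$ factor must be dominated, which again follows because the exponential decay beats any inverse-power-of-$\epsilon$ factor. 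I would therefore assemble the three pieces, absorb all polynomial-in-$\frac{1}{\epsilon}$ prefactors into the exponential, and conclude the stated bound with a suitably adjusted constant $c > 0$.
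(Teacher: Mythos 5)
Your proposal is correct and is exactly the route the paper intends: the paper states Corollary 2 without proof, relying on the scaling $\epsilon = \frac{1}{\ell}$, $Y_{1}=\ell X_{1}$ described just before Theorem 6, under which problem (\ref{2}) on $\omega_{1}\times\omega_{2}$ becomes problem (\ref{22}) on $\Omega_{\ell}$ with the $\epsilon$-powers in $A_{\epsilon}$ absorbed, $u_{0}$ identified with $u_{\infty}$, and $\Omega_{\alpha\ell}$ pulled back to $\alpha\omega_{1}\times\omega_{2}$. Your bookkeeping (Jacobian factor $\ell^{\pm q/p}$, the $\epsilon^{-1}$ on the $X_{1}$-gradient, slice-wise Poincar\'{e} in $\omega_{2}$ for the $L^{p}$ term, all polynomial factors absorbed by shrinking $c$) fills in precisely the details the paper leaves implicit.
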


\begin{remark}
It is very difficult to prove the rate convergence theorem for general data.
When $f(x)=f_{1}(X_{2})+f_{2}(x)$ with $f_{1}\in L^{p}(\omega _{2})$ and $%
f_{2}\in W_{2}$ we only have the estimates 
\begin{multline*}
\epsilon \left\Vert \nabla _{X_{1}}(u_{\epsilon }-u_{0})\right\Vert
_{L^{p}(\omega _{1}^{\prime }\times \omega _{2})}+\left\Vert \nabla
_{X_{2}}(u_{\epsilon }-u_{0})\right\Vert _{L^{p}(\omega _{1}^{\prime }\times
\omega _{2})} \\
+\left\Vert u_{\epsilon }-u_{0}\right\Vert _{L^{p}(\omega _{1}^{\prime
}\times \omega _{2})}\leq C\epsilon
\end{multline*}%
This follows from the linearity of the equation, Theorem 5 and the $L^{2}-$%
theory \cite{3}.
\end{remark}

\section{Some Extensions to nonlinear problems and applications}

\subsection{A semilinear monotone problem}

We consider the semilinear problem 
\begin{equation}
\left\{ 
\begin{array}{cc}
-\func{div}(A_{\epsilon }\nabla u_{\epsilon })=f+a(u_{\epsilon })\text{ \ \
\ \ \ \ \ \ \ \ } &  \\ 
u_{\epsilon }=0\text{ \ \ on }\partial \Omega \text{\ \ \ \ \ \ \ \ \ \ \ \
\ \ \ \ } & \text{ }%
\end{array}%
\right.  \label{30}
\end{equation}

Where the $a:%
\mathbb{R}
\rightarrow 
\mathbb{R}
$ is a continuous nonincreasing function which satisfies the growth condition%
\begin{equation}
\forall x\in 
\mathbb{R}
:\left\vert a(x)\right\vert \leq K(1+\left\vert x\right\vert )\text{, }K\geq
0  \label{31}
\end{equation}

and $f\in L^{p}(\Omega )$ where $1<p<2$ $,$ $f\notin L^{2}(\Omega )$ and $A$
is given as in Subsection 1.2. Clearly the Nemytskii operator $u\rightarrow
a(u)$ maps $L^{r}(\Omega )\rightarrow L^{r}(\Omega )$ continuously for every 
$1\leq r<\infty $. The passage to the limit (formally) gives the limit
problem%
\begin{equation}
\left\{ 
\begin{array}{cc}
-\func{div}_{X_{2}}(A_{22}(X_{1},\cdot )\nabla u_{0}(X_{1},\cdot
))=f(X_{1},\cdot )+a(u_{0}(X_{1},\cdot ))\text{\ \ \ \ } &  \\ 
u_{0}(X_{1},\cdot )=0\text{ \ \ on }\partial \Omega _{X_{1}}\text{\ \ \ \ \
\ \ \ \ \ \ \ \ \ \ \ \ \ \ \ \ \ \ \ \ \ \ \ \ \ \ \ \ \ \ \ \ \ \ \ \ } & 
\text{ }%
\end{array}%
\right.  \label{32}
\end{equation}

We can suppose that $a(0)=0$. Indeed, in the general case the right hand
side of (\ref{30}) can be replaced by $(a(0)+f)+b(x)$ where $b(x)=a(x)-a(0)$%
. Clearly $b$ is continuous nonincreasing and satisfies $\left\vert
b(x)\right\vert \leq (K+\left\vert a(0)\right\vert )(1+\left\vert
x\right\vert )$.

First of all, suppose that $f\in L^{2}(\Omega ),$then we have the following

\begin{proposition}
Assume (\ref{3}), (\ref{4}) and $a(0)=0$. Let $u_{\epsilon }$ be the unique
weak solution in $H_{0}^{1}(\Omega )$ to (\ref{30}) then $\epsilon \nabla
_{X_{1}}u_{\epsilon }\rightarrow 0$ in $L^{2}(\Omega )$ and $u_{\epsilon
}\rightarrow u_{0}$ in $V_{2}$ where $u_{0}$ in the unique solution in $%
V_{2} $ to the limit problem (\ref{32}).
\end{proposition}

\begin{proof}
Existence of $u_{\epsilon }$ follows directly by a simple application of the
Schauder fixed point theorem for example. The uniqueness follows form
monotonicity of $a$ and the Poincar\'{e}'s inequality.

Take $u_{\epsilon }$ as a test function in (\ref{30}) then one can obtain
the estimates 
\begin{equation*}
\epsilon \left\Vert \nabla _{X_{1}}u_{\epsilon }\right\Vert _{L^{2}(\Omega )}%
\text{, }\left\Vert \nabla _{X_{2}}u_{\epsilon }\right\Vert _{L^{2}(\Omega )}%
\text{, }\left\Vert u_{\epsilon }\right\Vert _{L^{2}(\Omega )}\leq C\text{,}
\end{equation*}

where $C$ is independent of $\epsilon $, we have used that $\int_{\Omega
}a(u_{\epsilon })u_{\epsilon }dx\leq 0$ (thanks to monotonicity assumption
and $a(0)=0$). And we also have (thanks to assumption (\ref{31}))%
\begin{equation*}
\left\Vert a(u_{\epsilon })\right\Vert _{L^{2}(\Omega )}\leq K(\left\vert
\Omega \right\vert ^{\frac{1}{2}}+C)
\end{equation*}

so there exists $v\in L^{2}(\Omega )$, $u_{0}\in L^{2}(\Omega )$, $\nabla
_{X_{2}}u_{0}\in L^{2}(\Omega )$ and a subsequence $(u_{\epsilon
_{k}})_{k\in 
\mathbb{N}
}$ such that 
\begin{equation}
a(u_{\epsilon _{k}})\rightarrow v\text{, }\epsilon _{k}\nabla
_{X_{1}}u_{\epsilon _{k}}\rightharpoonup 0\text{, }\nabla
_{X_{2}}u_{\epsilon _{k}}\rightharpoonup \nabla _{X_{2}}u_{0}\text{, }%
u_{\epsilon _{k}}\rightharpoonup u_{0}\text{ in }L^{2}(\Omega )\text{-weak}
\label{33}
\end{equation}

Passing to the in the weak formulation of (\ref{30}) we get%
\begin{equation}
\int_{\Omega }A_{22}\nabla _{X_{2}}u_{0}\cdot \nabla _{X_{2}}\varphi
dx=\int_{\Omega }f\varphi dx\ +\int_{\Omega }v\varphi dx\text{,}\ \varphi
\in \mathcal{D(}\Omega \mathcal{)}  \label{34}
\end{equation}

Take $\varphi =u_{\epsilon _{k}}$ in the previous equality and passing to
the limit we get%
\begin{equation}
\int_{\Omega }A_{22}\nabla _{X_{2}}u_{0}\cdot \nabla
_{X_{2}}u_{0}dx=\int_{\Omega }fu_{0}dx\ +\int_{\Omega }vu_{0}dx  \label{35}
\end{equation}

Let us computing the quantity 
\begin{multline*}
0\leq I_{k}=\int_{\Omega }A_{\epsilon _{k}}\left( 
\begin{array}{c}
\nabla _{X_{1}}u_{\epsilon _{k}} \\ 
\nabla _{X_{2}}(u_{\epsilon _{k}}-u_{0})%
\end{array}%
\right) \cdot \left( 
\begin{array}{c}
\nabla _{X_{1}}u_{\epsilon _{k}} \\ 
\nabla _{X_{2}}(u_{\epsilon _{k}}-u_{0})%
\end{array}%
\right) dx \\
-\int_{\Omega }(a(u_{\epsilon _{k}})-a(u_{0}))(u_{\epsilon _{k}}-u_{0})dx \\
=\int_{\Omega }fu_{\epsilon _{k}}dx-\epsilon \int_{\Omega }A_{12}\nabla
_{X_{2}}u_{0}\cdot \nabla _{X_{1}}u_{\epsilon _{k}}dx-\epsilon \int_{\Omega
}A_{21}\nabla _{X_{1}}u_{\epsilon _{k}}\cdot \nabla _{X_{2}}u_{0}dx \\
-\int_{\Omega }A_{22}\nabla _{X_{2}}u_{\epsilon _{k}}\cdot \nabla
_{X_{2}}u_{0}dx-\int_{\Omega }A_{22}\nabla _{X_{2}}u_{0}\cdot \nabla
_{X_{2}}u_{\epsilon _{k}}dx \\
+\int_{\Omega }fu_{0}dx+\int_{\Omega }vu_{0}dx+\int_{\Omega
}a(u_{0})u_{\epsilon _{k}}dx \\
+\int_{\Omega }a(u_{\epsilon _{k}})u_{0}dx-\int_{\Omega }a(u_{0})u_{0}dx
\end{multline*}

(This quantity is positive thanks to the ellipticity and monotonicity
assumptions).

Passing to the limit as $k\rightarrow \infty $ using (\ref{33}), (\ref{34}),
(\ref{35}) we get 
\begin{equation*}
\lim I_{k}=0
\end{equation*}

And finally The ellipticity assumption and Poincar\'{e}'s inequality show
that 
\begin{equation}
\left\Vert \epsilon _{k}\nabla _{X_{1}}u_{\epsilon _{k}}\right\Vert
_{L^{2}(\Omega )}\text{, }\left\Vert \nabla _{X_{2}}(u_{\epsilon
_{k}}-u_{0})\right\Vert _{L^{2}(\Omega )}\text{, }\left\Vert u_{\epsilon
_{k}}-u_{0}\right\Vert _{L^{2}(\Omega )}\rightarrow 0  \label{36}
\end{equation}

Whence (\ref{34}) becomes%
\begin{equation}
\int_{\Omega }A_{22}\nabla _{X_{2}}u_{0}\cdot \nabla _{X_{2}}\varphi
dx=\int_{\Omega }f\varphi dx\ +\int_{\Omega }a(u_{0})\varphi dx\text{,}\
\varphi \in \mathcal{D(}\Omega \mathcal{)}  \label{37}
\end{equation}

$\left\Vert \nabla _{X_{2}}(u_{\epsilon _{k}}-u_{0})\right\Vert
_{L^{2}(\Omega )}\rightarrow 0$ shows that $u_{0}\in V_{2}$, and therefore%
\begin{equation*}
\int_{\Omega _{X_{1}}}A_{22}\nabla _{X_{2}}u_{0}\cdot \nabla _{X_{2}}\varphi
dx=\int_{\Omega _{X_{1}}}f\varphi dx\ +\int_{\Omega _{X_{1}}}a(u_{0})\varphi
dx\text{,}\ \varphi \in \mathcal{D(}\Omega _{X_{1}}\mathcal{)}
\end{equation*}

Hence $u_{0}(X_{1},\cdot )$ is a solution to (\ref{32}). The uniqueness in $%
H_{0}^{1}(\Omega _{X_{1}})$ of the the solution of the limit problem (\ref%
{32}) shows that $u_{0}$ is the unique function in $V_{2}$ which satisfies (%
\ref{37}). Therefore the convergences (\ref{36}) hold for the whole sequence 
$(u_{\epsilon })_{0<\epsilon \leq 1}.$
\end{proof}

Now, we are ready to give the main result of this subsection

\begin{theorem}
Suppose that $f\in L^{p}(\Omega )$ where $1<p<2$ (we can suppose that $%
f\notin L^{2}(\Omega )$) then there exists $u_{0}\in V_{p}$ such that $%
u_{0}(X_{1},\cdot )$ is the unique entropy solution to (\ref{32}) and we
have $u_{\epsilon }\rightarrow u_{0}$ in $V_{p}$, $\epsilon \nabla
_{X_{1}}u_{\epsilon }\rightarrow 0$ in $L^{p}(\Omega )$, where $u_{\epsilon
} $ is the unique entropy solution to (\ref{30}).
\end{theorem}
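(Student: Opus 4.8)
The plan is to mirror the three-step strategy used for the linear problem (Theorems 1--3), feeding in Proposition 4 as the $L^{2}$-theory input and exploiting the sign and monotonicity properties of $a$ to neutralize the nonlinear term in every estimate. First I would fix an approximating sequence $(f_{n})\subset L^{2}(\Omega )$ with $f_{n}\rightarrow f$ in $L^{p}(\Omega )$ and $\left\Vert f_{n}\right\Vert _{L^{p}}\leq M$, and consider the regularized problems $-\func{div}(A_{\epsilon }\nabla u_{\epsilon }^{n})=f_{n}+a(u_{\epsilon }^{n})$ in $H_{0}^{1}(\Omega )$, whose solutions exist and are unique exactly as in Proposition 4. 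Testing with $\theta (u_{\epsilon }^{n})$ (the function from subsection 2.1) and using that $a$ is nonincreasing with $a(0)=0$, so that $a(t)\theta (t)\leq 0$ pointwise (since $\theta$ has the same sign as its argument), the nonlinear term drops out with the favorable sign; what remains is literally the computation leading to (\ref{8}), so I recover the uniform bounds (\ref{9})--(\ref{10}): $\left\Vert u_{\epsilon }^{n}\right\Vert _{L^{p}}$, $\left\Vert \nabla _{X_{2}}u_{\epsilon }^{n}\right\Vert _{L^{p}}$ and $\left\Vert \epsilon \nabla _{X_{1}}u_{\epsilon }^{n}\right\Vert _{L^{p}}$ are all bounded independently of $\epsilon$ and $n$.

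For the Cauchy structure I would subtract the equations for two indices and test the difference with $\theta (u^{n}-u^{m})$. Monotonicity of $a$ gives $(a(s)-a(t))\theta (s-t)\leq 0$, so again the nonlinear contribution has the good sign and can be discarded, reducing the computation to the linear one of subsection 2.2. This yields $\left\Vert u_{0}^{n}-u_{0}^{m}\right\Vert _{V_{p}}\leq C\left\Vert f_{n}-f_{m}\right\Vert _{L^{p}}^{1/2}$ for the limit-problem solutions $u_{0}^{n}\in V_{2}$ produced by Proposition 4, and---uniformly in $\epsilon$---the analogue for $u_{\epsilon }^{n}$ together with the $W^{1,p}$ bound carrying the factor $1/\epsilon$. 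Hence $(u_{0}^{n})_{n}$ converges in $V_{p}$ to some $u_{0}\in V_{p}$, and for each fixed $\epsilon$ the sequence $(u_{\epsilon }^{n})_{n}$ converges in $W_{0}^{1,p}(\Omega )$ to a weak solution $u_{\epsilon }$ of (\ref{30}), the passage to the limit in the term $a(u_{\epsilon }^{n})$ being justified by continuity of the Nemytskii operator on $L^{p}(\Omega )$ (continuity and linear growth of $a$).

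Next I would promote $u_{\epsilon }$ and $u_{0}$ to entropy solutions, copying subsection 2.3. Testing the regularized equation with $T_{k}(u_{\epsilon }^{n})$ and using ellipticity gives, for fixed $\epsilon ,k$, a bound on $\left\Vert \nabla T_{k}(u_{\epsilon }^{n})\right\Vert _{L^{2}}$ uniform in $n$; combined with $u_{\epsilon }^{n}\rightarrow u_{\epsilon }$ a.e.\ this shows $T_{k}(u_{\epsilon })\in H_{0}^{1}(\Omega )$, i.e.\ $u_{\epsilon }\in \mathcal{T}_{0}^{1,2}(\Omega )$. Passing to the limit $n\rightarrow \infty$ in the entropy inequality for $u_{\epsilon }^{n}$ then yields $\int_{\Omega }A_{\epsilon }\nabla u_{\epsilon }\cdot \nabla T_{k}(u_{\epsilon }-\varphi )\,dx\leq \int_{\Omega }(f+a(u_{\epsilon }))T_{k}(u_{\epsilon }-\varphi )\,dx$, so $u_{\epsilon }$ is the entropy solution of (\ref{30}); the same argument applied slice-wise gives that $u_{0}(X_{1},\cdot )$ is the entropy solution of (\ref{32}) for a.e.\ $X_{1}$, and uniqueness of the entropy solution (which follows from monotonicity of $a$) forces $u_{0}$ to be unique in $V_{p}$ and $u_{\epsilon }$ to be the unique entropy solution of (\ref{30}).

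Finally I would conclude by the triangle inequality $\left\Vert u_{\epsilon }-u_{0}\right\Vert _{V_{p}}\leq \left\Vert u_{\epsilon }-u_{\epsilon }^{n}\right\Vert _{V_{p}}+\left\Vert u_{\epsilon }^{n}-u_{0}^{n}\right\Vert _{V_{p}}+\left\Vert u_{0}^{n}-u_{0}\right\Vert _{V_{p}}$, choosing $n$ large to make the first and third terms small uniformly in $\epsilon$ (by the uniform-in-$\epsilon$ Cauchy estimate and the $V_{p}$ convergence of $u_{0}^{n}$), and then letting $\epsilon \rightarrow 0$ for fixed $n$ to kill the middle term via Proposition 4; the companion limit $\epsilon \nabla _{X_{1}}u_{\epsilon }\rightarrow 0$ in $L^{p}(\Omega )$ is obtained the same way. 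I expect the only genuinely delicate point to be the passage to the limit in the nonlinear term inside the entropy inequality---controlling $a(u_{\epsilon }^{n})T_{k}(u_{\epsilon }^{n}-\varphi )$---which is handled by the a.e.\ convergence together with the linear growth of $a$ and the boundedness of $T_{k}$; everything else is a sign-controlled repetition of the linear arguments, the two structural facts $a(t)\theta (t)\leq 0$ and $(a(s)-a(t))\theta (s-t)\leq 0$ being exactly what lets the nonlinear term disappear from both the a priori and the Cauchy estimates.
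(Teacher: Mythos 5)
Your proposal is correct and follows essentially the same route as the paper: the paper's (sketched) proof likewise approximates with $f_{n}+a(u_{\epsilon }^{n})$, $f_{n}\in L^{2}(\Omega )$, feeds in Proposition 4 as the $L^{2}$-theory input, and uses the monotonicity inequality $\int_{\Omega }(a(u)-a(v))\theta (u-v)\,dx\leq 0$ to make the nonlinear term drop out of the estimates, then repeats the three-step construction of Section 2 (a priori bounds, Cauchy estimates in $V_{p}$ uniform in $\epsilon $, promotion to entropy solutions via $T_{k}$ as in subsection 2.3). Your write-up simply fills in the details that the paper leaves as a sketch.
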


\begin{proof}
We only give a sketch of the proof. Existence and uniqueness of the entropy
solutions to (\ref{30}) and (\ref{32}) follows from the general result
proved in \cite{2}. As in proof of Theorem 2 we shall construct the entropy
solution $u_{\epsilon }$. we consider the approximated problem%
\begin{equation*}
\left\{ 
\begin{array}{cc}
-\func{div}(A_{\epsilon }\nabla u_{\epsilon }^{n})=f_{n}+a(u_{\epsilon }^{n})%
\text{ \ \ \ \ \ \ \ \ \ \ } &  \\ 
u_{\epsilon }^{n}=0\text{ \ \ on }\partial \Omega \text{\ \ \ \ \ \ \ \ \ \
\ \ \ \ \ \ \ \ \ \ \ \ \ } & \text{ }%
\end{array}%
\right.
\end{equation*}

We follows the same arguments as in section 2, where we use the above
proposition and the following 
\begin{equation*}
\int_{\Omega }(a(u)-a(v)\theta (u-v)dx\leq 0
\end{equation*}

Which holds for every $u,v\in L^{2}(\Omega )$, in fact this follows from
monotonicity of $a$ and $\theta $.
\end{proof}

\subsection{Nonlinear problem without monotonicity assumption}

Suppose that $\Omega =\omega _{1}\times \omega _{2}$ where $\omega _{1}$, $%
\omega _{2}$ and consider the following nonlinear problem%
\begin{equation}
\left\{ 
\begin{array}{cc}
-\func{div}(A_{\epsilon }\nabla u_{\epsilon })=f+B(u_{\epsilon })\text{\ \ \
\ \ \ \ \ \ \ \ \ \ \ \ \ \ \ \ \ \ \ \ \ \ \ \ } &  \\ 
u_{\epsilon }=0\text{ \ \ on }\partial \Omega \text{ \ \ \ \ \ \ \ \ \ \ \ \
\ \ \ \ \ \ \ \ \ \ \ \ \ \ \ \ \ \ \ \ \ \ \ \ \ \ \ \ \ \ } & \text{ }%
\end{array}%
\right.  \label{38}
\end{equation}%
\bigskip Where $f\in L^{p}(\Omega )$, $1<p<2$ and $B:L^{p}(\Omega
)\rightarrow L^{p}(\Omega )$ is a continuous nonlinear operator. We suppose
that%
\begin{equation}
\exists M\geq 0\text{, }\forall u\in L^{p}(\Omega ):\left\Vert
B(u)\right\Vert _{L^{p}}\leq M  \label{39}
\end{equation}

\begin{proposition}
Assume (\ref{3}), (\ref{4}),and (\ref{39}) then:

1) There exists a sequence $(u_{\epsilon })_{0<\epsilon \leq 1}\subset
W_{0}^{1,p}(\Omega )$ of an entropy solutions to (\ref{38}) which are also a
weak solutions such that 
\begin{equation*}
\epsilon \left\Vert \nabla _{X_{1}}u_{\epsilon }\right\Vert _{L^{p}(\Omega )}%
\text{, }\left\Vert \nabla _{X_{2}}u_{\epsilon }\right\Vert _{L^{p}(\Omega )}%
\text{, }\left\Vert u_{\epsilon }\right\Vert _{L^{p}(\Omega )}\leq C_{0}%
\text{,}
\end{equation*}

where $C_{0}\geq 0$ is independent of $\epsilon $( the constant $C_{0}$
depends only on $\Omega $, $\lambda $, $f$ and $M$).

2) If $(u_{\epsilon })_{0<\epsilon \leq 1}$ is a sequence of entropy and
weak solutions to (\ref{38}) then we have the above estimates.
\end{proposition}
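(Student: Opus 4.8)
The pivotal observation, and the reason assumption (\ref{39}) is imposed, is that for every $u\in L^{p}(\Omega)$ the effective datum $g:=f+B(u)$ obeys $\Vert g\Vert_{L^{p}(\Omega)}\leq \Vert f\Vert_{L^{p}(\Omega)}+M=:M'$, a bound \emph{independent of both $u$ and $\epsilon$}. Thus each prospective solution $u_{\epsilon}$ solves the \emph{linear} equation $-\func{div}(A_{\epsilon}\nabla u_{\epsilon})=g_{\epsilon}$ with a datum whose $L^{p}$-norm is controlled by $M'$, and the entire analysis of Section 2 becomes available. Both claims reduce to the linear a priori estimate derived there.

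For the existence assertion (1) I would fix $\epsilon$ and invoke Schauder's fixed-point theorem. Define $T_{\epsilon}:L^{p}(\Omega)\rightarrow L^{p}(\Omega)$ by letting $T_{\epsilon}(v)$ be the unique entropy solution of the linear problem $-\func{div}(A_{\epsilon}\nabla u)=f+B(v)$; this solution exists, is also a weak solution, and lies in $W_{0}^{1,p}(\Omega)$ by the linear entropy theory of Theorem 2, since $f+B(v)\in L^{p}(\Omega)$. The a priori bound below shows that $T_{\epsilon}$ sends all of $L^{p}(\Omega)$ into the fixed closed convex ball $\{\,w:\Vert w\Vert_{L^{p}}\leq C_{0}\,\}$, hence this ball into itself. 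The map is continuous: if $v_{j}\rightarrow v$ in $L^{p}$ then $B(v_{j})\rightarrow B(v)$ in $L^{p}$ by continuity of $B$, and the linear continuous-dependence estimate of the type (\ref{17}) (the entropy solution map is linear and depends H\"{o}lder-$\tfrac{1}{2}$ continuously on the datum in $V_{p}$, hence in $W^{1,p}$) forces $T_{\epsilon}(v_{j})\rightarrow T_{\epsilon}(v)$. It is compact because, for $\epsilon$ fixed, its range lies in a bounded subset of $W_{0}^{1,p}(\Omega)$ (using the bound on $\epsilon\nabla_{X_{1}}$ as in (\ref{11})), which embeds compactly into $L^{p}(\Omega)$ by Rellich--Kondrachov ($p<N$). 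Schauder then yields a fixed point $u_{\epsilon}=T_{\epsilon}(u_{\epsilon})$, an entropy and weak solution of (\ref{38}).

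The uniform estimate is obtained exactly as in the linear case. Writing $g_{\epsilon}=f+B(u_{\epsilon})$ with $\Vert g_{\epsilon}\Vert_{L^{p}}\leq M'$, I would test with $\theta(u_{\epsilon})$, use the ellipticity assumption (\ref{3}) together with $\theta'(t)=(1+|t|)^{p-2}\leq 1$ and $|\theta(t)|\leq\tfrac{2}{p-1}(1+|t|)^{p-1}$, and repeat the H\"{o}lder and Poincar\'{e} steps leading to (\ref{8})--(\ref{10}). Since those steps used only the $L^{p}$-norm of the datum, the constants depend solely on $\Omega$, $\lambda$, $\Vert f\Vert_{L^{p}}$ and $M$, giving $\epsilon\Vert\nabla_{X_{1}}u_{\epsilon}\Vert_{L^{p}}$, $\Vert\nabla_{X_{2}}u_{\epsilon}\Vert_{L^{p}}$, $\Vert u_{\epsilon}\Vert_{L^{p}}\leq C_{0}$. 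For part (2) nothing more is needed: any prescribed family of entropy-and-weak solutions feeds the \emph{same} datum bound $\Vert f+B(u_{\epsilon})\Vert_{L^{p}}\leq M'$ into the identical computation.

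The one delicate point, which I expect to be the main obstacle, is the admissibility of $\theta(u_{\epsilon})$ as a test function: the entropy solution need not belong to $H_{0}^{1}(\Omega)$, so the formal testing above is not literally justified. I would resolve this with the approximation device of subsection 2.3, approximating $g_{\epsilon}$ in $L^{p}(\Omega)$ by $L^{2}$ functions $g_{\epsilon}^{j}$ with $\Vert g_{\epsilon}^{j}\Vert_{L^{p}}\leq M'$, solving the regularized problems (\ref{7}) whose solutions $u_{\epsilon}^{j}\in H_{0}^{1}(\Omega)$ do admit $\theta(u_{\epsilon}^{j})$ as a legitimate test function, deriving (\ref{8})--(\ref{10}) for them, and finally passing to the limit $j\rightarrow\infty$ using weak lower semicontinuity of the $V_{p}$-norm together with the uniqueness of the entropy solution (which gives $u_{\epsilon}^{j}\rightarrow u_{\epsilon}$). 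This transfers the estimates to $u_{\epsilon}$ without ever testing directly against a non-admissible function.
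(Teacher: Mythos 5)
Your proposal is correct and follows essentially the same route as the paper: a Schauder fixed-point argument for the map $v\mapsto$ (entropy solution of the linearized problem $-\func{div}(A_{\epsilon}\nabla u)=f+B(v)$), with the uniform bounds coming from the Section~2 estimates applied to data bounded by $\left\Vert f\right\Vert _{L^{p}}+M$, and part (2) obtained by feeding the solution itself back into the linearized problem. The only cosmetic difference is that the paper applies Schauder on a convex set $K$ that is compact in $L^{p}(\Omega )$ (bounded in the $W^{1,p}$-type norms), whereas you use a closed $L^{p}$-ball together with compactness of the map; your explicit approximation argument justifying the test function $\theta (u_{\epsilon })$ makes rigorous a step the paper leaves implicit.
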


\begin{proof}
1) The existence of $u_{\epsilon }$ is based on the Schauder fixed point
theorem, we define the mapping $\Gamma :L^{p}(\Omega )\rightarrow
L^{p}(\Omega )$ by 
\begin{equation*}
v\in L^{p}(\Omega )\rightarrow \Gamma (v)=v_{\epsilon }\in
W_{0}^{1,p}(\Omega )
\end{equation*}

where $v_{\epsilon }$ is the entropy solution of the linearized problem%
\begin{equation}
\left\{ 
\begin{array}{cc}
-\func{div}(A_{\epsilon }\nabla v_{\epsilon })=f+B(v)\text{\ \ \ \ \ \ \ \ \
\ \ \ \ \ \ \ \ \ \ \ \ \ \ \ \ \ \ \ \ } &  \\ 
v_{\epsilon }=0\text{ \ \ on }\partial \Omega \text{ \ \ \ \ \ \ \ \ \ \ \ \
\ \ \ \ \ \ \ \ \ \ \ \ \ \ \ \ \ \ \ \ \ \ \ \ \ \ \ \ \ \ } & \text{ }%
\end{array}%
\right.  \label{40}
\end{equation}

Since the entropy solution is unique then $\Gamma $ is well defined. we can
prove easily (by using the approximation method) that $\Gamma $ is
continuous. As in subsection 2.1 we can obtain the estimates%
\begin{equation*}
\epsilon \left\Vert \nabla _{X_{1}}u_{\epsilon }\right\Vert _{L^{p}(\Omega )}%
\text{, }\left\Vert \nabla _{X_{2}}u_{\epsilon }\right\Vert _{L^{p}(\Omega )}%
\text{, }\left\Vert u_{\epsilon }\right\Vert _{L^{p}(\Omega )}\leq C_{0}
\end{equation*}

where $C_{0}$ is independent of $\epsilon $ and $v$ (thanks to (\ref{39}))

Now, define the subset%
\begin{equation*}
K=\left\{ u\in W_{0}^{1,p}(\Omega ):\epsilon \left\Vert \nabla
_{X_{1}}u\right\Vert _{L^{p}(\Omega )}\text{, }\left\Vert \nabla
_{X_{2}}u\right\Vert _{L^{p}(\Omega )}\text{, }\left\Vert u\right\Vert
_{L^{p}(\Omega )}\leq C_{0}\right\}
\end{equation*}

The subset $K$ is convex and compact in $L^{p}(\Omega )$ thanks to the
Sobolev compact embedding $W_{0}^{1,p}(\Omega )\subset L^{p}(\Omega ).$

The subset $K$ is stable under $\Gamma $ (since $C_{0}$ is independent of $v$
as mentioned above). Whence $\Gamma $ admits at least a fixed point $%
u_{\epsilon }\in K,$ in other words $u_{\epsilon }$ is a weak solution to (%
\ref{38}) which is also an entropy solution, this last assertion follows
from the definition of $\Gamma $.

2) Let $(u_{\epsilon })_{0<\epsilon \leq 1}$ be a sequence of entropy and
weak solutions to (\ref{38}) \ $u_{\epsilon }$ is the unique entropy
solution to (\ref{40}) with $v$ replaced by $u_{\epsilon }$ and therefore we
obtain the desired estimates as proved above.
\end{proof}

\begin{remark}
In the general case the entropy solution $u_{\epsilon }$ of (\ref{38}) is
not necessarily unique.
\end{remark}

Now, assume that 
\begin{equation}
f(x)=f(X_{2})\text{, }A_{22}(x)=A_{22}(X_{2})\text{, }A_{12}(x)=A_{12}(X_{2})
\label{41}
\end{equation}

And assume that for every $E\subset W_{p}$ bounded in $L^{p}(\Omega )$ we
have%
\begin{equation}
\overline{conv}\left\{ B(E)\right\} \subset W_{2}\text{,}  \label{42}
\end{equation}

where $\overline{conv}\left\{ B(E)\right\} $ is the closed convex-hull of $%
B(E)$ in $L^{p}(\Omega )$. Assumption (\ref{42}) appears strange. We shall
give later some concrete examples of operators which satisfy this
assumption. Let us prove the following

\begin{theorem}
Assume (\ref{3}), (\ref{4}), (\ref{39}), (\ref{41}) and (\ref{42}). Let $%
(u_{\epsilon })_{0<\epsilon \leq 1}\subset W_{0}^{1,p}(\Omega )$ be an
entropy and weak solution to (\ref{38}) then for every $\Omega ^{\prime
}\subset \subset \Omega $ there exists $C_{\Omega ^{\prime }}\geq 0$
independent of $\epsilon $ such that 
\begin{equation*}
\forall \epsilon :\left\Vert u_{\epsilon }\right\Vert _{W^{1,p}(\Omega
^{\prime })}\leq C_{\Omega ^{\prime }}
\end{equation*}
\end{theorem}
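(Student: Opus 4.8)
The plan is to reduce the statement to a uniform interior bound on the $X_1$-gradient. By part~(2) of the preceding Proposition the family $(u_\epsilon)$ satisfies $\|\nabla_{X_2}u_\epsilon\|_{L^p(\Omega)},\ \|u_\epsilon\|_{L^p(\Omega)}\le C_0$ with $C_0$ independent of $\epsilon$, so the whole difficulty is to produce a constant $C_{\Omega'}$, independent of $\epsilon$, with $\|\nabla_{X_1}u_\epsilon\|_{L^p(\Omega')}\le C_{\Omega'}$; the claimed $W^{1,p}(\Omega')$-bound then follows by adding the three estimates.

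First I would use the structural hypotheses to upgrade the right-hand side. Put $b_\epsilon:=B(u_\epsilon)$ and $E:=\{u_\epsilon\}_{0<\epsilon\le1}$. By the preceding Proposition $E$ is bounded in $L^p(\Omega)$, and $E\subset W_p$ because every $u_\epsilon\in W_0^{1,p}(\Omega)$; hence (\ref{42}) applies and $\overline{conv}\{B(E)\}\subset W_2$. The key observation is that this set is in fact bounded in $W_2$: it is closed and convex in $L^p(\Omega)$, bounded there by (\ref{39}), and equals $\bigcup_m K_m$ with $K_m=\{v:\|v\|_{W_2}\le m\}$, each $K_m$ being $L^p$-closed by weak lower semicontinuity of the $W_2$-norm; a Baire category argument combined with convexity then yields a uniform bound $\|v\|_{W_2}\le C$ on the set, in particular $\|b_\epsilon\|_{W_2}\le C$ for all $\epsilon$. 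Since $f=f(X_2)$ by (\ref{41}) belongs to $W_p$, the data $f+b_\epsilon$ are bounded in $W_p$, and they split into an $X_1$-independent piece $f$ and a piece $b_\epsilon$ lying in $W_2$ with a uniform bound.

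Next, freezing $b_\epsilon$, the equation becomes the \emph{linear} anisotropic problem $-\func{div}(A_\epsilon\nabla u_\epsilon)=f+b_\epsilon$. Exploiting linearity of the solution operator I would treat the two pieces of the datum separately, exactly along the lines indicated in the Remark closing section~3. For the $X_1$-independent datum $f$ the associated limit is itself independent of $X_1$, so the energy method of the rate-of-convergence theorems gives that the corresponding $\nabla_{X_1}$-component is controlled uniformly on $\omega_1'\times\omega_2$. For the datum $b_\epsilon\in W_2$ one invokes the $L^2$-theory of \cite{3} for the rate, together with the interior regularity of subsection~2.4 (which applies because $A_{22}=A_{22}(X_2)$ and $f+b_\epsilon\in W_p$) to bound the $X_1$-gradient of the corresponding limit; since all constants depend only on $\|f\|_{L^p}$ and $\sup_\epsilon\|b_\epsilon\|_{W_2}$, the resulting bound on $\|\nabla_{X_1}u_\epsilon\|_{L^p(\Omega')}$ is uniform in $\epsilon$.

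The hard part will be this last, interior, $X_1$-estimate, and its uniformity as $\epsilon\to0$. The operator $A_\epsilon$ is only $\epsilon^2$-elliptic in the $X_1$-direction (indeed $A_\epsilon\xi\cdot\xi\ge\lambda(\epsilon^2|\xi_1|^2+|\xi_2|^2)$), so in any difference-quotient or comparison argument the mixed blocks $A_{12}$ and $A_{21}$, entering with a single factor $\epsilon$, are dangerous: absorbed against the degenerate $\epsilon^2$-ellipticity they threaten a loss of a power of $\epsilon$. What saves the estimate is (\ref{41}): because $A_{12},A_{22}$ are independent of $X_1$, their translates cancel in the difference quotients, while the remaining contributions are matched either by the already-known bound on $\epsilon\nabla_{X_1}u_\epsilon$ or by the $X_1$-independence of the limit for the $f$-piece. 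I would therefore expect the decisive technical work to lie in making the rate-type energy estimate for the linear problem uniform in $\epsilon$ for data in $W_2$; once $\|\nabla_{X_1}u_\epsilon\|_{L^p(\Omega')}\le C_{\Omega'}$ is established, combining it with the uniform $L^p$-bounds on $u_\epsilon$ and $\nabla_{X_2}u_\epsilon$ from the preceding Proposition completes the proof.
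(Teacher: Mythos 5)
Your proposal is correct in outline but takes a genuinely different route from the paper. The paper, like you, reduces everything to the linear solution operators $\Lambda _{\epsilon }$ and to the splitting of the datum into $f$ plus a $W_{2}$-part (that is exactly the role of the Remark closing section 3), but it never extracts a uniform $W_{2}$-bound on $\overline{conv}\left\{ B(E)\right\} $ and never tracks constants: instead it sets $G=f+\overline{conv}\left\{ B(E_{0})\right\} $ with $E_{0}=\left\{ u\in W_{p}\mid \left\Vert u\right\Vert _{L^{p}(\Omega )}\leq C_{0}\right\} $, notes that $G$ is closed, convex and bounded in $L^{p}(\Omega )$, hence weakly compact, that each single orbit $\left\{ \Lambda _{\epsilon }g\right\} _{\epsilon }$ is bounded in $W_{loc}^{1,p}(\Omega )$ for \emph{fixed} $g\in G$ (by the Remark), and that $\Lambda _{\epsilon }$ is weak-to-weak continuous; a Banach--Steinhaus argument over the compact convex set $G$ then gives a single bounded set $F$ with $\Lambda _{\epsilon }(G)\subset F$ for all $\epsilon $, and since $u_{\epsilon }=\Lambda _{\epsilon }(f+B(u_{\epsilon }))$ with $f+B(u_{\epsilon })\in G$, the theorem follows. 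This is precisely how the paper absorbs the $\epsilon $-dependence of the datum, i.e. the point you yourself flag as ``the decisive technical work'': no quantitative dependence of the constants on the datum is ever needed. Your replacement for this step has two parts. The Baire-category-plus-convexity argument showing that the closed convex $L^{p}$-bounded set $\overline{conv}\left\{ B(E)\right\} \subset W_{2}$ is automatically \emph{bounded} in $W_{2}$ is correct (the sets $K_{m}$ are indeed $L^{p}$-closed, and a relative interior point of some $K_{m_{0}}$ propagates to a global bound by convexity), and it is a genuine self-improvement of assumption (\ref{42}) that the paper does not observe. The second part, however --- ``all constants depend only on $\left\Vert f\right\Vert _{L^{p}}$ and $\sup_{\epsilon }\left\Vert b_{\epsilon }\right\Vert _{W_{2}}$'' --- is asserted rather than proved; it is true, but establishing it means re-running the energy estimates of the rate theorem, of subsection 2.4, and of the $L^{2}$-theory \cite{3} with explicit constants, or, more cheaply, applying the classical uniform boundedness principle to the maps $\Lambda _{\epsilon }:W_{2}\rightarrow W^{1,p}(\Omega ^{\prime })$, which are pointwise bounded by the Remark --- i.e. the same tool the paper uses, just on the Banach space $W_{2}$ rather than in the locally convex setting. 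In summary: your route buys a more concrete intermediate statement (uniform $W_{2}$-bound on the data) and avoids the locally convex Banach--Steinhaus machinery, at the price of needing quantitative linear estimates which you have not supplied; the paper's route gets the conclusion from purely qualitative orbit bounds, at the price of a more abstract functional-analytic argument.
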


\begin{proof}
The proof is similar the one given in our preprint \cite{10}. Let $(\Omega
_{i})_{j\in 
\mathbb{N}
}$ an open covering of $\Omega $ such that $\overline{\Omega _{j}}\subset
\Omega _{j+1}$. We equip the space $Z=W_{loc}^{1,p}(\Omega )$ with the
topology generated by the family of seminorms $(p_{j})_{j\in 
\mathbb{N}
}$ defined by 
\begin{equation*}
p_{j}(u)=\left\Vert u_{\epsilon }\right\Vert _{W^{1,p}(\Omega _{j})}
\end{equation*}

Equipped with this topology, $Z$ is a separated locally convex topological
vector space. We set $Y=L^{p}(\Omega )$ equipped with its natural topology.
We define the family of the linear continuous mappings 
\begin{equation*}
\Lambda _{\epsilon }:Y\rightarrow Z
\end{equation*}

defined by: $g\in Y$, $\Lambda _{\epsilon }(g)=v_{\epsilon }$ where $%
v_{\epsilon }$ is the unique entropy solution to 
\begin{equation*}
\left\{ 
\begin{array}{cc}
-\func{div}(A_{\epsilon }\nabla v_{\epsilon })=g\text{\ \ \ \ \ \ \ \ \ \ \
\ \ \ \ \ \ \ \ \ \ \ \ \ \ \ \ \ } &  \\ 
v_{\epsilon }=0\text{ \ \ on }\partial \Omega \text{ \ \ \ \ \ \ \ \ \ \ \ \
\ \ \ \ \ \ \ \ \ \ \ \ \ \ \ \ \ } & \text{ }%
\end{array}%
\right.
\end{equation*}

The continuity of $\Lambda _{\epsilon }$ follows immediately if we observe $%
\Lambda _{\epsilon }$ as a composition of $\Lambda _{\epsilon }:Y\rightarrow
Y$ and the canonical injection $Y$ $\rightarrow Z$

Now, we denote $Z_{w}$, $Y_{w}$ the spaces $Z$, $Y$ equipped with the weak
topology respectively. then $\Lambda _{\epsilon }:Y_{w}\rightarrow Z_{w}$ is
also continuous.

Consider the bounded (in $Y$) subset%
\begin{equation*}
E_{0}=\left\{ u\in W_{p}\mid \left\Vert u\right\Vert _{L^{p}(\Omega )}\leq
C_{0}\right\} ,
\end{equation*}

where $C_{0}$ is the constant introduced in Proposition 5. Consider the
subset $G=f+\overline{conv}\left\{ B(E_{0})\right\} $ where the closure is
taken in the $L^{p}-$topology. Thanks to assumption (\ref{42}) and (\ref{39}%
) $G$ is closed convex and bounded in $Y$. Now for every $g\in G$ the orbit $%
\left\{ \Lambda _{\epsilon }g\right\} _{\epsilon }$ is bounded in $Z$ thanks
to Remark 2. And therefore $\left\{ \Lambda _{\epsilon }g\right\} _{\epsilon
}$ is bounded in $Z_{w}$.

Clearly the set $G$ is compact in $Y_{w}$. Then it follows by the
Banach-Steinhaus theorem (applied on the quadruple $\Lambda _{\epsilon }$, $%
G $, $Y_{w}$, $Z_{w}$) that there exists a bounded subset $F$ in $Z_{w}$
such that%
\begin{equation*}
\forall \epsilon :\text{ }\Lambda _{\epsilon }(G)\subset F
\end{equation*}

The boundedness of $F$ in $Z_{w}$ implies its boundedness in $Z$.i.e For
every $j\in 
\mathbb{N}
$ there exists $C_{j}\geq 0$ independent of $\epsilon $ such that%
\begin{equation*}
\forall \epsilon :p_{j}(\Lambda _{\epsilon }(G))\leq C_{j}
\end{equation*}

Let $u_{\epsilon }$ be an entropy and weak solution to (\ref{38}) then we
have $u_{\epsilon }\in E_{0}$ as proved in Proposition 5 then $\Lambda
_{\epsilon }(f+B(u_{\epsilon }))=u_{\epsilon }$ $\in F$ for every $\epsilon $%
, therefore%
\begin{equation*}
\forall \epsilon :\left\Vert u_{\epsilon }\right\Vert _{W^{1,p}(\Omega
_{j})}\leq C_{j}
\end{equation*}

Whence for every $\Omega ^{\prime }\subset \subset \Omega $ there exists $%
C_{\Omega ^{\prime }}\geq 0$ independent of $\epsilon $ such that 
\begin{equation*}
\forall \epsilon :\left\Vert u_{\epsilon }\right\Vert _{W^{1,p}(\Omega
^{\prime })}\leq C_{\Omega ^{\prime }}
\end{equation*}
\end{proof}

Now we are ready to prove the convergence theorem. Assume that%
\begin{equation}
B:(L^{p}(\Omega ),\tau _{L_{loc}^{p}})\rightarrow L^{p}(\Omega )\text{ is
continuous}  \label{43}
\end{equation}

where $(L^{p}(\Omega ),\tau _{L_{loc}^{p}})$ is the space $L^{p}(\Omega )$
equipped with the $L_{loc}^{p}(\Omega )$-topology. Notice that (\ref{43})
implies that $B:L^{p}(\Omega )\rightarrow L^{p}(\Omega )$ is continuous.
Then we have the following

\begin{theorem}
Under assumptions of Theorem 8, assume in addition (\ref{43}), suppose that $%
\Omega $ is convex, then there exists $u_{0}\in V_{p}$ and a sequence $%
(u_{\epsilon _{k}})_{k\in 
\mathbb{N}
}$ of entropy and weak solution to (\ref{38}) such that 
\begin{eqnarray*}
\epsilon _{k}\nabla _{X_{1}}u_{\epsilon _{k}} &\rightharpoonup &0\text{, }%
\nabla _{X_{2}}u_{\epsilon _{k}}\rightharpoonup \nabla _{X_{2}}u_{0}\text{
in }L^{p}(\Omega )-weak \\
\text{ and \ }u_{\epsilon _{k}} &\rightarrow &u_{0}\text{ in }%
L_{loc}^{p}(\Omega )-strong
\end{eqnarray*}

Moreover $u_{0}$ satisfies in $\mathcal{D}^{\prime }(\omega _{2})$ the
equation 
\begin{equation*}
-\func{div}_{X_{2}}(A_{22}\nabla _{X_{2}}u_{0}(X_{1},\cdot
))=f+B(u_{0})(X_{1},\cdot )
\end{equation*}

for a.e $X_{1}\in \omega _{1}$
\end{theorem}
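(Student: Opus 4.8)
The plan is to begin with a family $(u_{\epsilon})_{0<\epsilon\le 1}$ of entropy and weak solutions to (\ref{38}), whose existence is granted by Proposition 5, and to extract a convergent subsequence by combining the two kinds of a priori bounds already available. Proposition 5 furnishes the global estimates $\left\Vert u_{\epsilon}\right\Vert _{L^{p}(\Omega )}$, $\left\Vert \nabla _{X_{2}}u_{\epsilon}\right\Vert _{L^{p}(\Omega )}$, $\epsilon \left\Vert \nabla _{X_{1}}u_{\epsilon}\right\Vert _{L^{p}(\Omega )}\le C_{0}$, uniformly in $\epsilon$, while Theorem 8 furnishes, for every $\Omega ^{\prime }\subset \subset \Omega$, a bound $\left\Vert u_{\epsilon}\right\Vert _{W^{1,p}(\Omega ^{\prime })}\le C_{\Omega ^{\prime }}$ independent of $\epsilon$. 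Using reflexivity of $L^{p}(\Omega )$ ($1<p<2$) together with the continuity of the derivation on $\mathcal{D}^{\prime }(\Omega )$, I would first extract $(u_{\epsilon _{k}})$ so that $u_{\epsilon _{k}}\rightharpoonup u_{0}$ and $\nabla _{X_{2}}u_{\epsilon _{k}}\rightharpoonup \nabla _{X_{2}}u_{0}$ in $L^{p}(\Omega )$-weak, and $\epsilon _{k}\nabla _{X_{1}}u_{\epsilon _{k}}\rightharpoonup \eta$ for some $\eta \in L^{p}(\Omega )$. That $\eta =0$ follows by testing against vector-valued functions in $\mathcal{D}(\Omega )$ and integrating by parts, since $\epsilon _{k}u_{\epsilon _{k}}\rightarrow 0$ strongly in $L^{p}(\Omega )$.

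Next I would upgrade to strong local convergence. Theorem 8 bounds $(u_{\epsilon _{k}})$ in $W^{1,p}(\Omega _{j})$ along an exhaustion $\Omega _{j}\uparrow \Omega$ with $\overline{\Omega _{j}}\subset \Omega _{j+1}$; the compact Sobolev embedding $W^{1,p}(\Omega _{j})\hookrightarrow \hookrightarrow L^{p}(\Omega _{j})$ together with a diagonal extraction then yields, after relabelling, $u_{\epsilon _{k}}\rightarrow u_{0}$ in $L_{loc}^{p}(\Omega )$-strong, the limit necessarily agreeing with the weak $L^{p}$ limit. To see $u_{0}\in V_{p}$ I would argue exactly as in the proof of Theorem 3: by Mazur's lemma there is a sequence $U_{n}\in \mathrm{conv}\{u_{\epsilon _{k}}\}$ with $U_{n}\rightarrow u_{0}$ and $\nabla _{X_{2}}U_{n}\rightarrow \nabla _{X_{2}}u_{0}$ in $L^{p}(\Omega )$-strong; since $U_{n}(X_{1},\cdot )\in W_{0}^{1,p}(\omega _{2})$ and, up to a subsequence, $U_{n}(X_{1},\cdot )\rightarrow u_{0}(X_{1},\cdot )$ in $W^{1,p}(\omega _{2})$ for a.e.\ $X_{1}\in \omega _{1}$, the limit inherits the homogeneous trace, whence $u_{0}(X_{1},\cdot )\in W_{0}^{1,p}(\omega _{2})$ a.e.\ and $u_{0}\in V_{p}$.

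It then remains to pass to the limit in the weak formulation $\int_{\Omega }A_{\epsilon _{k}}\nabla u_{\epsilon _{k}}\cdot \nabla \varphi \,dx=\int_{\Omega }(f+B(u_{\epsilon _{k}}))\varphi \,dx$, $\varphi \in \mathcal{D}(\Omega )$. Expanding $A_{\epsilon _{k}}$ into its four blocks, the three contributions carrying a factor $\epsilon _{k}$ or $\epsilon _{k}^{2}$ vanish in the limit: the $A_{11}$ and $A_{12}$ contributions are $O(\epsilon _{k})$ because $\epsilon _{k}\nabla _{X_{1}}u_{\epsilon _{k}}$ and $\nabla _{X_{2}}u_{\epsilon _{k}}$ stay bounded, while the $A_{21}$ contribution vanishes because $\epsilon _{k}\nabla _{X_{1}}u_{\epsilon _{k}}\rightharpoonup 0$; the $A_{22}$ term converges by weak convergence of $\nabla _{X_{2}}u_{\epsilon _{k}}$ against $A_{22}^{T}\nabla _{X_{2}}\varphi \in L^{p^{\prime }}$. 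This gives $\int_{\Omega }A_{22}\nabla _{X_{2}}u_{0}\cdot \nabla _{X_{2}}\varphi \,dx=\int_{\Omega }(f+B(u_{0}))\varphi \,dx$. Finally I would localize in $X_{1}$ as in Theorem 3, taking product test functions $\varphi (X_{1},X_{2})=\phi (X_{1})\psi (X_{2})$ together with a Schauder basis of $W_{0}^{1,p^{\prime }}(\omega _{2})$ and a partition of unity, to obtain the stated equation in $\mathcal{D}^{\prime }(\omega _{2})$ for a.e.\ $X_{1}\in \omega _{1}$ (here $f$ and $A_{22}$ being independent of $X_{1}$ by (\ref{41}), and $B(u_{0})(X_{1},\cdot )\in L^{p}(\omega _{2})$ a.e.\ by Fubini); the convexity of $\Omega$ is what keeps the exhaustion and the product covering compatible with the domain.

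The crux of the argument, and the step where all hypotheses conspire, is the passage to the limit in the nonlinear term $\int_{\Omega }B(u_{\epsilon _{k}})\varphi \,dx$. Weak convergence of $u_{\epsilon _{k}}$ is useless here, since $B$ is nonlinear; what rescues the argument is that assumption (\ref{42}) is precisely what powered Theorem 8, delivering the local $W^{1,p}$ compactness that gives $u_{\epsilon _{k}}\rightarrow u_{0}$ strongly in $L_{loc}^{p}(\Omega )$, that is, in the topology $\tau _{L_{loc}^{p}}$. Assumption (\ref{43}) then converts this into $B(u_{\epsilon _{k}})\rightarrow B(u_{0})$ in $L^{p}(\Omega )$-strong, so that $\int_{\Omega }B(u_{\epsilon _{k}})\varphi \,dx\rightarrow \int_{\Omega }B(u_{0})\varphi \,dx$. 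I expect this interplay between (\ref{42}) (compactness) and (\ref{43}) (continuity in the local topology) to be the main obstacle to render fully rigorous, every other step being a routine adaptation of the linear theory of Section 2.
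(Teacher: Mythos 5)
Your proposal is correct and follows essentially the same route as the paper: weak compactness from the Proposition 5 estimates, membership $u_{0}\in V_{p}$ via the Mazur-lemma argument of Theorem 3, strong $L_{loc}^{p}$ convergence from the local $W^{1,p}$ bounds of Theorem 8 plus compact Sobolev embedding and a diagonal extraction, and then assumption (\ref{43}) to pass to the limit in the nonlinear term. The only slight imprecision is your reading of where convexity enters: in the paper it is used to build the exhaustion $\Omega _{j}=\beta _{j}(\Omega -x_{0})+x_{0}$ so that each $\Omega _{j}$ is itself Lipschitz, which is exactly what legitimizes the compact embedding $W^{1,p}(\Omega _{j})\hookrightarrow L^{p}(\Omega _{j})$ you invoke.
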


\begin{proof}
The estimates given in Proposition 5 show that there exists $u_{0}\in
L^{p}(\Omega )$ and a sequence $(u_{\epsilon _{k}})_{k\in 
\mathbb{N}
}$ solutions to (\ref{38}) such that%
\begin{equation}
\epsilon _{k}\nabla _{X_{1}}u_{\epsilon _{k}}\rightharpoonup 0\text{, }%
\nabla _{X_{2}}u_{\epsilon _{k}}\rightharpoonup \nabla _{X_{2}}u_{0}\text{
and }u_{\epsilon _{k}}\rightharpoonup u_{0}\text{ in }L^{p}(\Omega )-weak
\label{44}
\end{equation}

As we have proved in Theorem 3 we have $u_{0}\in V_{p}$. The particular
difficulty is the passage to the limit in the nonlinear term. This assertion
is guaranteed by Theorem 8. Indeed, since $\Omega $ is convex and Lipschitz
then there an open covering $(\Omega _{j})_{j\in 
\mathbb{N}
}$, $\Omega _{j}\subset \Omega _{j+1}$ and $\overline{\Omega _{j}}\subset
\Omega $ such that each $\Omega _{j}$ is a Lipschitz domain (Take an
increasing sequence of number $0<\beta _{j}<1$ with $\lim \beta _{j}=1$.\
Fix $x_{0}\in \Omega $ and take $\Omega _{j}=\beta _{j}(\Omega -x_{0})+x_{0}$%
, since $\Omega $ is convex then $\overline{\Omega _{j}}\subset \Omega $.
The Lipschitz character is conserved since the multiplication by $\beta _{j}$
and translations are $C^{\infty }$ diffeomorphisms).

Theorem 8 shows that for every $j\in 
\mathbb{N}
$ there exists $C_{j}\geq 0$ such that%
\begin{equation*}
\left\Vert u_{\epsilon }\right\Vert _{W^{1,p}(\Omega _{j})}\leq C_{\Omega
_{j}}
\end{equation*}

Since $\Omega _{j}$ is Lipschitz then the embedding $W^{1,p}(\Omega
_{j})\hookrightarrow L^{p}(\Omega _{j})$ is compact \cite{15} and therefore
for each $k$ there exists a subsequence $(u_{\epsilon _{k}^{j}})_{k}\subset
L^{p}(\Omega _{j})$ such that 
\begin{equation*}
u_{\epsilon _{k}^{j}}\mid _{\Omega _{j}}\rightarrow u_{0}\mid _{\Omega _{j}}
\end{equation*}

By the diagonal process one can construct a sequence $(u_{\epsilon
_{k}})_{k} $ such that $u_{\epsilon _{k}}\rightarrow u_{0}$ in $L^{p}(\Omega
_{j})$ for every $j$, in other words we have 
\begin{equation}
u_{\epsilon _{k}}\rightarrow u_{0}\text{ in }L_{loc}^{p}(\Omega )-strong
\label{45}
\end{equation}%
Now passing to the limit in the weak formulation of (\ref{38}) we deduce 
\begin{equation*}
-\func{div}_{X_{2}}(A_{22}\nabla _{X_{2}}u_{0}(X_{1},\cdot
))=f+B(u_{0})(X_{1},\cdot )\text{,}
\end{equation*}

where we have used (\ref{44}) for the passage to the limit in the left hand
side. For the passage to the limit in the nonlinear term we have used (\ref%
{45}) and assumption (\ref{43}).
\end{proof}

\begin{example}
\bigskip We give a concrete example of application of the above abstract
analysis. Let $\Omega =\omega _{1}\times \omega _{2}$ be a Lispchitz convex
domain of $%
\mathbb{R}
^{q}\times 
\mathbb{R}
^{N-q}$ and let $A$ be a bounded $(N-q)\times (N-q)$ matrix defined on $%
\omega _{2}$ which satisfies the ellipticity assumption. Let us consider the
integro-differential problem 
\begin{equation}
\left\{ 
\begin{array}{cc}
-\func{div}_{X_{2}}(A(X_{2})\nabla _{X_{2}}u)=f(X_{2})+\dint_{\omega
_{1}}h(X_{1}^{\prime },X_{1},X_{2})a(u(X_{1}^{\prime },X_{2}))dX_{1}^{\prime
}\text{\ \ \ \ \ \ \ \ \ \ \ \ \ \ \ \ \ \ \ \ \ \ \ \ \ \ } &  \\ 
u(X_{1},\cdot )=0\text{ \ \ on }\partial \omega _{2}\text{ \ \ \ \ \ \ \ \ \
\ \ \ \ \ \ \ \ \ \ \ \ \ \ \ \ \ \ \ \ \ \ \ \ \ \ \ \ \ \ \ \ \ \ \ \ \ \
\ \ \ \ \ \ \ \ \ \ \ \ \ \ \ \ \ \ \ \ \ \ \ \ \ \ \ \ \ \ \ \ \ \ \ } & 
\text{ }%
\end{array}%
\right.  \label{46}
\end{equation}
\end{example}

where $h\in L^{\infty }(\omega _{1}\times \Omega )$ and $f\in L^{p}(\omega
_{2})$, $1<p<2$, and $a$ is a continuous real bounded function.

This equation is based on the Neutron transport equation (see for instance 
\cite{11})

A solution to (\ref{46}) is a function $u\in V_{p}$ Which satisfies (\ref{46}%
) in $\mathcal{D}^{\prime }(\omega _{2})$. suppose that 
\begin{equation*}
\nabla _{X_{1}}h(X_{1}^{\prime },X_{1},X_{2})\in L^{\infty }(\omega
_{1}\times \Omega )
\end{equation*}

Then we have

\begin{theorem}
Under the assumptions of this example, (\ref{46}) has at least a solution in 
$V_{p}$ in the sense of $\mathcal{D}^{\prime }(\omega _{2})$ for a.e $%
X_{1}\in \omega _{1}$
\end{theorem}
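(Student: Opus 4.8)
The plan is to recognize (\ref{46}) as the limit problem of an abstract anisotropic perturbation of type (\ref{38}) and then to invoke Theorem 9. First I would fix the data of (\ref{38}) by taking $A_{22}=A(X_{2})$, $A_{11}=I_{q}$ and $A_{12}=A_{21}=0$, so that the full matrix $\mathrm{diag}(I_{q},A)$ inherits boundedness and ellipticity from $A$, giving (\ref{3}) and (\ref{4}); since moreover $f=f(X_{2})$ and $A_{12}\equiv 0$, assumption (\ref{41}) holds trivially. The nonlinear operator is chosen to be $B(u)(X_{1},X_{2})=\int_{\omega _{1}}h(X_{1}^{\prime },X_{1},X_{2})\,a(u(X_{1}^{\prime },X_{2}))\,dX_{1}^{\prime }$, precisely so that the limit equation produced by Theorem 9 reads $-\func{div}_{X_{2}}(A\nabla _{X_{2}}u_{0})=f+B(u_{0})$, which is exactly (\ref{46}). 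It then remains only to verify the hypotheses (\ref{39}), (\ref{42}) and (\ref{43}).

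The boundedness (\ref{39}) is immediate: $a$ bounded and $h\in L^{\infty}$ give $|B(u)|\le \Vert h\Vert_{\infty}\Vert a\Vert_{\infty}|\omega_{1}|$ a.e., so $B$ maps $L^{p}(\Omega)$ into a fixed ball of $L^{\infty}(\Omega)\subset L^{p}(\Omega)$; Proposition 5 then supplies the entropy-and-weak solutions $u_{\epsilon}$ together with their uniform bounds. For (\ref{42}) I would differentiate under the integral sign using $\nabla_{X_{1}}h\in L^{\infty}$, obtaining $\nabla_{X_{1}}B(u)=\int_{\omega_{1}}\nabla_{X_{1}}h(X_{1}^{\prime },X_{1},X_{2})\,a(u(X_{1}^{\prime },X_{2}))\,dX_{1}^{\prime }$, again bounded in $L^{\infty}$ by a constant independent of $u$. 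Hence $B$ sends all of $L^{p}(\Omega)$ into a single ball $B_{W_{2}}(0,R)$. As $W_{2}$ is reflexive, any sequence bounded in $W_{2}$ and convergent in $L^{p}$ has a weak-$W_{2}$ limit which, by closedness of the distributional $\nabla_{X_{1}}$, coincides with the $L^{p}$-limit; thus the $L^{p}$-closure of $B_{W_{2}}(0,R)$ remains in $W_{2}$, and since a ball is convex this yields $\overline{conv}\{B(E)\}\subset W_{2}$ for every $E$ bounded in $L^{p}$.

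The remaining and most delicate point is the continuity (\ref{43}) from the $L^{p}_{loc}$-topology into $L^{p}(\Omega)$. Here I would first estimate, using $h\in L^{\infty}$ together with H\"{o}lder's (equivalently Jensen's) inequality in the $X_{1}^{\prime }$-integration, that $\Vert B(u)-B(v)\Vert_{L^{p}(\Omega)}\le \Vert h\Vert_{\infty}|\omega_{1}|\,\Vert a(u)-a(v)\Vert_{L^{p}(\Omega)}$, which reduces everything to the continuity of the Nemytskii map $u\mapsto a(u)$ from $L^{p}_{loc}(\Omega)$ into $L^{p}(\Omega)$. If $u_{n}\to u$ in $L^{p}_{loc}$, then along an exhaustion of $\Omega$ a diagonal subsequence converges a.e.\ on $\Omega$; continuity of $a$ gives $a(u_{n_{k}})\to a(u)$ a.e., while the uniform bound $|a|\le \Vert a\Vert_{\infty}$ furnishes a constant dominating function in $L^{p}(\Omega)$ (recall $\Omega$ is bounded), so dominated convergence yields $L^{p}$-convergence. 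Since every subsequence admits such a further subsequence with the same limit, the whole sequence converges, establishing (\ref{43}). The hard part is genuinely this passage from local to global integrability, and it is the boundedness of $a$ that controls the behaviour near $\partial\Omega$ and rescues the non-standard $L^{p}_{loc}\to L^{p}$ continuity.

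With (\ref{3}), (\ref{4}), (\ref{39}), (\ref{41}), (\ref{42}) and (\ref{43}) all verified and $\Omega$ convex and Lipschitz, Theorem 9 applies and produces $u_{0}\in V_{p}$, together with a sequence $(u_{\epsilon_{k}})$ of entropy-and-weak solutions to (\ref{38}), such that $u_{0}$ satisfies $-\func{div}_{X_{2}}(A_{22}\nabla_{X_{2}}u_{0}(X_{1},\cdot))=f+B(u_{0})(X_{1},\cdot)$ in $\mathcal{D}^{\prime}(\omega_{2})$ for a.e.\ $X_{1}\in\omega_{1}$. Unwinding $A_{22}=A$ and the definition of $B$, this is precisely (\ref{46}), so $u_{0}$ is the desired solution.
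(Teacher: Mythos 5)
Your proposal is correct and follows essentially the same route as the paper: introduce the perturbed problem with $A_{\epsilon}=\mathrm{diag}(\epsilon ^{2}I,A)$ (i.e. $A_{11}=I$, $A_{12}=A_{21}=0$, $A_{22}=A$), take $B(u)=\int_{\omega _{1}}h(X_{1}^{\prime },X_{1},X_{2})a(u(X_{1}^{\prime },X_{2}))dX_{1}^{\prime }$, verify (\ref{39}), (\ref{41}), (\ref{42}), (\ref{43}), and conclude by Theorem 9. If anything, you supply more detail than the paper: your differentiation under the integral sign plus the weak-compactness argument for (\ref{42}) fills in a step the paper delegates to its preprint \cite{10}, and your Nemytskii reduction for (\ref{43}) matches the paper's a.e.-convergence/dominated-convergence/subsequence argument.
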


\begin{proof}
We introduce the singular perturbation problem 
\begin{equation*}
\left\{ 
\begin{array}{cc}
-\func{div}_{X}(A_{\epsilon }\nabla u_{\epsilon })=f(X_{2})+\dint_{\omega
_{1}}h(X_{1}^{\prime },X_{1},X_{2})a(u_{\epsilon }(X_{1}^{\prime
},X_{2}))dX_{1}^{\prime }\text{\ \ \ \ \ \ \ \ \ \ \ \ \ \ \ \ \ \ \ \ \ \ \
\ \ \ } &  \\ 
u_{\epsilon }=0\text{ \ \ on }\partial \Omega \text{ \ \ \ \ \ \ \ \ \ \ \ \
\ \ \ \ \ \ \ \ \ \ \ \ \ \ \ \ \ \ \ \ \ \ \ \ \ \ \ \ \ \ \ \ \ \ \ \ \ \
\ \ \ \ \ \ \ \ \ \ \ \ \ \ \ \ \ \ \ \ \ \ \ \ \ \ \ \ \ \ \ \ \ \ \ \ \ }
& \text{ }%
\end{array}%
\right.
\end{equation*}

where 
\begin{equation*}
A_{\epsilon }=\left( 
\begin{array}{cc}
\epsilon ^{2}I & 0 \\ 
0 & A%
\end{array}%
\right)
\end{equation*}%
Clearly $A_{\epsilon }$ satisfies the ellipticity assumption and it is Clear
that the operator 
\begin{equation*}
u\rightarrow \dint_{\omega _{1}}h(X_{1}^{\prime
},X_{1},X_{2})a(u(X_{1}^{\prime },X_{2}))dX_{1}^{\prime }
\end{equation*}

satisfies assumption (\ref{39}).

We can prove easily that the above operator satisfies assumption (\ref{43}).
Indeed, let $u_{n}\rightarrow u$ in $L_{loc}^{p}(\Omega )$ then there exists
a subsequence $(u_{n_{k}})$ (constructed by the diagonal process) such that $%
u_{n_{k}}\rightarrow u$ a.e in $\Omega $. Since $a$ is bounded then it
follows by the Lebesgue theorem that 
\begin{equation*}
\dint_{\omega _{1}}h(X_{1}^{\prime },X_{1},X_{2})a(u_{n_{k}}(X_{1}^{\prime
},X_{2}))dX_{1}^{\prime }\rightarrow \dint_{\omega _{1}}h(X_{1}^{\prime
},X_{1},X_{2})a(u(X_{1}^{\prime },X_{2}))dX_{1}^{\prime },
\end{equation*}

in $L^{p}(\Omega )$. Whence by a contradiction argument we get 
\begin{equation*}
\dint_{\omega _{1}}h(X_{1}^{\prime },X_{1},X_{2})a(u_{n}(X_{1}^{\prime
},X_{2}))dX_{1}^{\prime }\rightarrow \dint_{\omega _{1}}h(X_{1}^{\prime
},X_{1},X_{2})a(u(X_{1}^{\prime },X_{2}))dX_{1}^{\prime },
\end{equation*}

in $L^{p}(\Omega )$

We can prove similarly as in \cite{10} that (\ref{42}) holds, therefore the
assertion of the theorem is a simple application of theorem 9
\end{proof}

\begin{remark}
Notice that the compacity of the operator given in the previous example is
not sufficient to prove a such result as in the $L^{2}$ theory \cite{11}.
This shows the importance of assumption (\ref{42}) wich holds for the above
operator.
\end{remark}

Does operator whose assumption (\ref{42}) holds admit necessarily an
integral representation as in (\ref{46})?.

\begin{example}
We shall replace the integral by a general linear operator. Let us consider
the following problem: Find $u\in V_{p}$ such that 
\begin{equation}
\left\{ 
\begin{array}{cc}
-\func{div}_{X_{2}}(A\nabla _{X_{2}}u)=f(X_{2})+gP(ha(u))\text{\ \ \ \ \ \ \
\ \ \ \ \ \ \ \ \ \ \ \ \ \ \ \ \ \ \ \ \ \ \ \ \ \ \ \ \ \ \ \ \ \ \ \ \ \
\ \ \ \ \ \ \ \ \ \ \ \ \ \ \ \ \ \ \ \ \ \ } &  \\ 
u(X_{1},\cdot )=0\text{ \ \ on }\partial \omega _{2}\text{ \ \ \ \ \ \ \ \ \
\ \ \ \ \ \ \ \ \ \ \ \ \ \ \ \ \ \ \ \ \ \ \ \ \ \ \ \ \ \ \ \ \ \ \ \ \ \
\ \ \ \ \ \ \ \ \ \ \ \ \ \ \ \ \ \ \ \ \ \ \ \ \ \ \ \ \ \ \ \ \ \ \ } & 
\text{ }%
\end{array}%
\right. ,  \label{47}
\end{equation}
\end{example}

where $a$, $A$ and $f$ are defined as in Example 1.

We suppose that $g$, $h\in L^{\infty }(\Omega )$ with $Supp(h)\subset \Omega 
$ \ compact. Assume $\nabla _{X_{1}}g\in L^{\infty }(\Omega )$ and $%
P:L^{p}(\Omega )\rightarrow L^{2}(\omega _{2})$ is a bounded linear operator.

When $P$ is not compact then the operator $u\rightarrow gP(ha(u))$ is not
necessarily compact, if this is the case then this operator cannot admit an
integral representation.

\begin{theorem}
Under the assumptions of this example there exists at least a solution $u\in
V_{p}$ to (\ref{47}) in the sense of $\mathcal{D}^{\prime }(\omega _{2})$
for a.e $X_{1}\in \omega _{1}$
\end{theorem}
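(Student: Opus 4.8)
The plan is to realize (\ref{47}) as the limit problem of an anisotropic singular perturbation of the form (\ref{38}) and to invoke Theorem 9. First I would introduce the perturbed problem (\ref{38}) with the block-diagonal matrix
\[
A_{\epsilon}=\left(\begin{array}{cc}\epsilon^{2}I & 0\\ 0 & A\end{array}\right),
\]
that is $A_{11}=I$, $A_{12}=A_{21}=0$, $A_{22}=A$, and with the nonlinear operator $B(u)=gP(ha(u))$. With this choice the limit problem furnished by Theorem 9 is exactly (\ref{47}), so it suffices to check that $A_{\epsilon}$, $f$ and $B$ satisfy the hypotheses of Theorem 9: (\ref{3}), (\ref{4}), (\ref{39}), (\ref{41}), (\ref{42}), (\ref{43}), together with convexity of $\Omega$ (which holds since $\Omega=\omega_{1}\times\omega_{2}$ is the convex Lipschitz domain fixed in Example 1).

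The structural hypotheses are immediate. Ellipticity and boundedness of $A_{\epsilon}$ give (\ref{3}) and (\ref{4}); and since $f=f(X_{2})$, $A_{22}=A(X_{2})$ and $A_{12}=0$ depend on $X_{2}$ only, (\ref{41}) holds. For the uniform growth bound (\ref{39}) I would use that $a$ is bounded: then $ha(u)\in L^{\infty}(\Omega)\subset L^{p}(\Omega)$ with norm controlled by $\|h\|_{\infty}\|a\|_{\infty}$ independently of $u$, hence $P(ha(u))\in L^{2}(\omega_{2})$ with $\|P(ha(u))\|_{L^{2}(\omega_{2})}\le \|P\|\,\|h\|_{\infty}\|a\|_{\infty}|\Omega|^{1/p}$, and finally multiplication by $g\in L^{\infty}(\Omega)$ gives $\|B(u)\|_{L^{p}(\Omega)}\le M$ with $M$ independent of $u$.

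Next I would verify the continuity hypothesis (\ref{43}) exactly as in the proof of Theorem 10. If $u_{n}\to u$ in the $L_{loc}^{p}$-topology, then extracting by a diagonal process a subsequence converging a.e., the continuity and boundedness of $a$ together with the Lebesgue dominated convergence theorem give $ha(u_{n_{k}})\to ha(u)$ in $L^{p}(\Omega)$; continuity of $P$ then gives $P(ha(u_{n_{k}}))\to P(ha(u))$ in $L^{2}(\omega_{2})$, hence in $L^{p}(\omega_{2})$ by the embedding $L^{2}(\omega_{2})\hookrightarrow L^{p}(\omega_{2})$ (valid since $p<2$ and $\omega_{2}$ is bounded), and multiplication by $g\in L^{\infty}(\Omega)$ yields $B(u_{n_{k}})\to B(u)$ in $L^{p}(\Omega)$. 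A standard contradiction argument promotes this to convergence of the whole sequence, which is (\ref{43}).

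The main obstacle is the abstract regularity assumption (\ref{42}), and this is precisely where the hypotheses $P:L^{p}(\Omega)\to L^{2}(\omega_{2})$ and $\nabla_{X_{1}}g\in L^{\infty}(\Omega)$ enter. The key observation is that $B(u)(X_{1},X_{2})=g(X_{1},X_{2})\,w(X_{2})$ with $w=P(ha(u))$ a function of $X_{2}$ alone lying in $L^{2}(\omega_{2})$. Consequently $B(u)\in L^{2}(\Omega)$ and $\nabla_{X_{1}}B(u)=(\nabla_{X_{1}}g)\,w\in L^{2}(\Omega)$, so $B(u)\in W_{2}$; moreover, for $E\subset W_{p}$ bounded in $L^{p}(\Omega)$ the uniform bound on $\|w\|_{L^{2}(\omega_{2})}$ from the second paragraph shows that $\{B(u):u\in E\}$, and hence $conv\{B(E)\}$, is bounded in the Hilbert space $W_{2}$. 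Given any $v_{n}\in conv\{B(E)\}$ with $v_{n}\to v$ in $L^{p}(\Omega)$, this $W_{2}$-boundedness yields a subsequence $v_{n_{j}}\rightharpoonup \tilde v$ weakly in $W_{2}$; since weak $W_{2}$-convergence implies weak $L^{2}(\Omega)$- and hence weak $L^{p}(\Omega)$-convergence, while $v_{n_{j}}\to v$ strongly in $L^{p}(\Omega)$, we must have $\tilde v=v$, so $v\in W_{2}$. This gives $\overline{conv}\{B(E)\}\subset W_{2}$, i.e. (\ref{42}). With all hypotheses verified, Theorem 9 produces the required $u_{0}\in V_{p}$ solving (\ref{47}) in $\mathcal{D}^{\prime}(\omega_{2})$ for a.e. $X_{1}\in\omega_{1}$, completing the proof.
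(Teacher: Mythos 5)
Your proposal is correct and follows the paper's intended route exactly: the paper's own proof of this theorem is the single sentence ``the proof is a simple application of theorem 9,'' meaning precisely the reduction you carry out --- realize (\ref{47}) as the limit of the perturbed problem (\ref{38}) with block-diagonal $A_{\epsilon}$ and $B(u)=gP(ha(u))$, then verify (\ref{3}), (\ref{4}), (\ref{39}), (\ref{41}), (\ref{42}), (\ref{43}) and convexity of $\Omega$. Your explicit verification of (\ref{42}) via the factorization $B(u)=g\,w$ with $w=P(ha(u))\in L^{2}(\omega_{2})$ uniformly bounded, followed by weak compactness in $W_{2}$ and identification of the weak limit with the strong $L^{p}$ limit, supplies soundly a detail the paper leaves entirely to the external reference \cite{10}.
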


\begin{proof}
Similarly, the proof is a simple application of theorem 9.
\end{proof}

\section{Some Open questions}

\begin{problem}
Suppose that $\infty >p>2$. Given $f\in L^{p}$ and consider (\ref{2}), since 
$f\in L^{2}$then $u_{\epsilon }\rightarrow u_{0}$ in $V_{2}$. Assume that $%
\Omega $ and $A$ are sufficiently regular .Can one prove that $u_{\epsilon
}\rightarrow u_{0}$ in $V_{p}$?
\end{problem}

\begin{problem}
What happens when $f\in L^{1}?$ As mentioned in the introduction there
exists a unique entropy solution to (\ref{2}) which belongs to $%
\dbigcap\limits_{1\leq r<\frac{N}{N-1}}W_{0}^{1,r}(\Omega )$. Can one prove
that $u_{\epsilon }\rightarrow u_{0}$ in $V_{r}$ for some $1\leq r<\frac{N}{%
N-1}?$ Can one prove at least weak convergence in $L^{r}$ for some $1<r<%
\frac{N}{N-1}$ as given in Theorem 4?
\end{problem}

\bigskip


\begin{thebibliography}{99}
\bibitem{15} Robert A. Adams, John J. F. Fournier. Sobolev spaces, Pure and
Applied Mathematics, Academic Press 2003.

\bibitem{1} S. Agmon -A. Douglis -L. Nirenberg,Estimates near the boundary
for solutions of elliptic partial differential equations satisfying general
boundary conditions, I, Comm. Pure Appl. Math.,12 (1959), pp. 623--727.

\bibitem{6} L. Boccardo, T. Gallou\"{e}t, J. L. Vazquez. Nonlinear Elliptic
Equations in Rn without Growth Restrictions on the Data, Journal of
Differential Equations, Vol. 105, n 2, october 1993, p. 334-363.

\bibitem{2} Ph. B\'{e}nilan, Philippe, L. Boccardo, Th. Gallou\"{e}t, R.
Gariepy, M. Pierre, J.-L. Vazquez, An $L^{1}$-theory of existence and
uniqueness of solutions of nonlinear elliptic equations. Ann. Scuola
Norm.Sup. Pisa Cl. Sci. (4) 22 (1995), no. 2, 241--273

\bibitem{13} M. Chipot, Elliptic equations, an introductory cours.Birkhauser
2009 ISBN: 978-3764399818

\bibitem{12} M. Chipot, On some anisotropic singular perturbation problems,
Asymptotic Analysis, 55 (2007), p.125-144

\bibitem{14} M. Chipot, K. Yeressian, Exponential rates of convergence by an
iteration technique, C. R. Acad. Sci. Paris, Ser. I 346, (2008) p. 21-26.

\bibitem{3} M. Chipot, S. Guesmia, On the asymptotic behaviour of elliptic,
anisotropic singular perturbations problems,Com. Pur. App. Ana. 8 (1)
(2009), pp. 179-193.

\bibitem{9} M. Chipot, S.Guesmia, M. Sengouga. Singular perturbations of
some nonlinear problems. J. Math. Sci. 176 (6), 2011, 828-843.

\bibitem{11} M. Chipot, S. Guesmia, On a class of integro-differential
problems. Commun. Pure Appl. Anal. 9(5), 2010, 1249--1262.

\bibitem{5} P. Enflo, A counterexample to the approximation problem in
Banach spaces, Acta Mathematica, vol. 130, 1973, p. 309-317.

\bibitem{7} S. Fucik, O. John and J. Necas, On the existence of Schauder
basis in Sobolev spaces, Comment. Math. Univ. Carolin. 13, (1972), 163-175.

\bibitem{8} T. Gallouet, R. Herbin, Existence of a solution to a coupled
elliptic system, Appl.Math. Letters 7(1994), 49--55.

\bibitem{10} C. Ogabi , On a class of nonlinear elliptic, anisotropic
singular perturbations problems. Submited. \textit{Preprint: }%
https://hal.archives-ouvertes.fr/hal-01074262

\bibitem{4} J. Serrin, Pathological solutions of elliptic differential
equations, Ann. Sc. Norm. Sup. Pisa,18 (1964), pp. 385--387.
\end{thebibliography}
\end{document}